\theoremstyle{plain}
\newtheorem{theorem}{Theorem}[section]
\newtheorem*{theorem*}{Theorem}
\newtheorem*{theorem-DisjtSS}{Theorem \ref{Thm: Disjt SS}}
\newtheorem*{theorem-EssentialTorus}{Theorem \ref{Thm: Essential Torus}}
\newtheorem*{cor-ScharlWu}{Corollary \ref{Cor-ScharlWu}}
\newtheorem*{corollary-MSC 2}{Corollary \ref{Cor: MSC 2}}
\newtheorem*{theorem-Main A}{Theorem \ref{Thm: Main A}}
\newtheorem*{theorem-Main B}{Theorem \ref{Thm: Main Thm B}}
\newtheorem*{Cor-Unknotting}{Theorem \ref{Cor: Prime Unknotting 1}}
\newtheorem*{Cor-genusbandsum}{Theorem \ref{Thm: Genus superadd}}
\newtheorem*{Cor-bandsumscc}{Corollary \ref{Cor: Band Sums CC}}
\newtheorem{proposition}[theorem]{Proposition}
\newtheorem{corollary}[theorem]{Corollary}
\newtheorem{lemma}[theorem]{Lemma}
\theoremstyle{definition}
\newtheorem*{remark}{Remark}
\newtheorem*{conjecture}{Conjecture}
\newcommand{\nil}{\varnothing}
\newcommand{\tild}{\widetilde}
\newcommand{\wihat}{\widehat}
\newcommand{\defn}[1]{\textbf{#1}}
\newcommand{\boundary}{\partial}
\newcommand{\mc}[1]{\mathcal{#1}}
\newcommand{\ob}[1]{\overline{#1}}
\newcommand{\genus}{\operatorname{genus}} 
\newcommand{\inter}[1]{\mathring{#1}}
      \def\@setcopyright{}
      \def\serieslogo@{}
\begin{document}

   \title[Comparing 2-handle additions]{Comparing 2-handle additions to a genus 2 boundary component}
   \author{Scott A Taylor}
   \email{sataylor@colby.edu}
   \thanks{}
\begin{abstract}
We prove that knots obtained by attaching a band to a split link satisfy the cabling conjecture. We also give new proofs that unknotting number one knots are prime and that genus is superadditive under band sum. Additionally, we prove a collection of results comparing two 2-handle additions to a genus two boundary component of a compact, orientable 3-manifold. These results give a near complete solution to a conjecture of Scharlemann and provide evidence for a conjecture of Scharlemann and Wu. The proofs make use of a new theorem concerning the effects of attaching a 2-handle to a suture in the boundary of a sutured manifold. 
\end{abstract}
\maketitle
\date{\today}

\section{Introduction}
Sutured manifold theory, since its creation, has been used for comparing the results of two Dehn surgeries on a knot in a 3-manifold. Probably the best known result along these lines is Gabai's theorem \cite[Corollary 2.4]{G2} that at most one Dehn filling on a torus boundary component of a compact, orientable 3-manifold, can decrease Thurston norm. In this paper we use a new theorem from \cite{T3} concerning 2-handle addition to a sutured manifold to compare the results of two 2-handle additions to a genus two boundary component of a compact, orientable 3-manifold. We begin by stating our results, none of the statements of which involve sutured manifolds. In Section \ref{Sutured Manifolds} we review the definition of ``sutured manifold'' and state the theorem (Theorem \ref{Thm: Main Theorem}) from \cite{T3} that is fundamental to the present work.

Throughout the paper we adopt the following conventions and notation. An \defn{essential surface} in a 3--manifold is one that is incompressible, not a 2-sphere bounding a ball, and not boundary parallel. The regular neighborhood of a space $X$ is denoted $\eta(X)$ and the interior of $X$ is denoted $\inter{X}$. $N$ is a compact, orientable 3--manifold with a boundary component $F$ containing an essential simple closed curve $b$. Let $N[b]$ denote the result of attaching a 2--handle to $N$ along a regular neighborhood of $b$ in $F$ and let $\beta \subset N[b]$ denote the properly embedded arc that is the cocore of the 2--handle attached to $b$. Let $\boundary_1 N = \boundary_1 N[b]$ be the union of the components of $\boundary N - F$ having genus at least two. Let $\boundary_0 N[b] = \boundary N[b] - (\boundary N - F)$. Thus $\boundary N - (\boundary_0 N \cup \boundary_1 N \cup F)$ and $\boundary N[b] - (\boundary_0 N[b] \cup \boundary_1 N[b])$ are empty or consist of tori.

\subsection{Non-simple 2-handle additions}

A 3-manifold is \defn{simple} if it contains no essential, sphere, disc, annulus or torus. 

If $\boundary N$ is compressible but $\boundary N - b$ is incompressible, Jaco \cite{J} showed that $\boundary N[b]$ is incompressible. Eudave-Mu\~noz \cite{EM2} gave upper bounds on the minimal number of times that an essential annulus or torus intersects $\beta$. Scharlemann \cite{S2} showed that for any non-zero homology class $y \in H_2(N[b],\boundary N[b])$ there is a Thurston-norm minimizing surface representing $y$ in $N[b]$ disjoint from $\beta$. Since adding a 2-handle to $\boundary N$ along the boundary of an essential disc creates a reducible 3--manifold, each of these results can be viewed as a comparision theorem between reducible 2-handle addition and either non-simple 2-handle addition or Thurston-norm reducing 2-handle addition. Of course, \textit{a priori} there may be reducible 2-handle additions that do not arise from attaching a 2-handle along the boundary of the disc. Also note that the aforementioned results concern 2-handle attachment to a non-simple 3-manifold. 

Some work has been done comparing non-simple 2-handle attachments to the boundary of a simple 3-manifold. The curve along which such a 2-handle is attached is called a \defn{degenerating} curve.  Here is what was known, prior to this paper, concerning 2--handle attachments along degenerating curves $a$ and $b$. None of these results use sutured manifold theory. We let $|a \cap b|$ denote the minimal intersection number between $a$ and $b$.

\begin{theorem*}
Suppose that $N$ is a simple 3--manifold and that $F \subset \boundary N$ is a component of genus at least 2 containing essential simple closed curves $a$ and $b$. Then
\begin{itemize}
\item If $N[b]$ is reducible and $N[a]$ is boundary-reducible then either $|a \cap b| = 0$ or $a$ and $b$ can be isotoped to lie in a common once-punctured torus in $\boundary N$ \cite[Theorem 4.2]{SW}.
\item If $N[a]$ and $N[b]$ are both reducible, then $|a \cap b| \leq 4$ \cite{ZQL}.
\item If $\genus(F) = 2$, if $a$ and $b$ are both separating, and if $N[a]$ and $N[b]$ are both boundary-reducible, then $|a \cap b| = 0$ \cite{LQZ}.
\end{itemize}
\end{theorem*}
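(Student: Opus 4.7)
The plan is to treat all three bullets by a single apparatus: \emph{graphs of intersection} between essential surfaces in $N[a]$ and $N[b]$, in the style of Gordon--Luecke and Scharlemann. Denote by $\alpha \subset N[a]$ and $\beta \subset N[b]$ the cocores of the two attached 2-handles. For each hypothesis --- reducibility or boundary-reducibility --- pick a certifying essential sphere or disc $\wihat{P} \subset N[b]$ and $\wihat{Q} \subset N[a]$, and isotope them to minimize $|\wihat{P} \cap \beta|$ and $|\wihat{Q} \cap \alpha|$. Setting $P = \wihat{P} \cap N$ and $Q = \wihat{Q} \cap N$, simplicity of $N$ forces $P$ and $Q$ to be essential planar surfaces in $N$, with $\boundary P$ a union of parallel copies of $b$ and $\boundary Q$ a union of copies of $a$ (together with, in the boundary-reducible case, an arc on $\boundary N \setminus F$). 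After placing $P$ and $Q$ in minimal general position in $N$, one forms labelled graphs $G_P \subset \wihat{P}$ and $G_Q \subset \wihat{Q}$ whose vertices are the boundary circles and whose edges are the arc components of $P \cap Q$, with edge-endpoints labelled by which copy of $a$ or $b$ they meet.

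The heart of the argument is the parity rule together with the extraction of a Scharlemann cycle. Standard face-counting on $G_P$ and $G_Q$ shows that when $|a \cap b|$ is sufficiently large a Scharlemann cycle must appear, and such a cycle yields a disc in $N$ whose boundary is obtained from an arc of $a$ or $b$ by an innermost-disc surgery on $F$. Simplicity of $N$ prohibits the essential spheres, discs, annuli, or tori that such discs would otherwise produce, and this contradiction bounds $|a \cap b|$. For the second bullet, the precise Euler-characteristic count on $G_P$ --- using that each vertex has valence at least $|a \cap b|$ and that the number of vertices is controlled by the genera of $\wihat P$ and of the components of $\boundary N[b]$ --- converts the appearance of Scharlemann cycles into the bound $|a \cap b| \leq 4$. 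For the third bullet, where both curves are separating and $\genus(F) = 2$, the parity constraints (each vertex of $G_P$ carries an equal number of each label) already rule out every positive value of $|a \cap b|$ without needing the full Scharlemann-cycle machinery.

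For the first bullet one needs a geometric rather than merely numerical conclusion. Here one iterates the Scharlemann-cycle extraction: each cycle output by the graph on $\wihat{P}$ produces an essential simple closed curve on $F$ disjoint from $b$ and meeting $a$ in at most two points. When no further reduction is possible, these curves together with small neighbourhoods of $a$ and $b$ must fill a subsurface of $F$ with Euler characteristic $-1$ and a single boundary component, i.e.\ a once-punctured torus containing both $a$ and $b$ up to isotopy. This is the strategy carried out in \cite{SW}.

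The main obstacle I anticipate is controlling \emph{extended} Scharlemann cycles and the discs obtained by surgering along them: one must verify that these discs are genuinely compressing rather than boundary-parallel, and that their boundaries on $F$ remain essential and simple. This is where simplicity of $N$ is invoked repeatedly, and where the genus-$2$ hypothesis in the third bullet is decisive, since $F \setminus a$ is then small enough that no room remains for a parallel output.
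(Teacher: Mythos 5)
This theorem is \emph{not} proved in the paper; it is stated purely as background, a catalogue of the prior literature on degenerating handle additions. The author is explicit that ``None of these results use sutured manifold theory.'' The only thing the paper does with this material is re-derive a related but different statement by sutured-manifold methods later on (Corollary~\ref{Cor-ScharlWu}), so there is no internal proof to compare your sketch against.

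That said, the machinery you outline --- intersection graphs, the parity rule, Scharlemann-cycle extraction --- is in fact what is used in \cite{SW}, \cite{ZQL}, and \cite{LQZ}, so as a guess at the source methodology you have the right family of techniques. But the sketch has real holes. The most concrete is your treatment of the third bullet: two non-isotopic separating curves on a genus two surface that intersect minimally meet at least four times, so the theorem is precisely the statement that $|a \cap b| = 4, 8, 12, \dots$ cannot occur; the parity rule (equal numbers of each label around a vertex) is a constraint that is perfectly consistent with $|a\cap b|=4$, and it does not by itself rule out any positive value. You need the full Scharlemann-cycle and face-count analysis there too. For the first bullet, the route to the once-punctured-torus conclusion is asserted, not argued: the claims that each Scharlemann cycle yields an essential curve on $F$ disjoint from $b$ and meeting $a$ at most twice, and that the accumulated curves fill a one-holed torus rather than some larger subsurface, have no mechanism behind them. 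And for the second bullet you correctly flag extended Scharlemann cycles as the obstacle and then leave them unresolved, which is precisely where the hard work lies when the target bound is an exact $4$.
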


A degenerating curve $a \subset \boundary N$ is called \defn{basic} if $a$ is separating or if there is no degenerating separating curve $a^*$ bounding in $\boundary N$ a once-punctured torus containing $b$. Scharlemann and Wu conjecture:

\begin{conjecture}[{\cite[Conjecture 2]{SW}}]
If $N$ is simple and if $a$ and $b$ are basic degenerating curves on $\boundary N$ then $|a \cap b| \leq 5$.
\end{conjecture}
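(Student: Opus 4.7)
The plan is to reduce the conjecture to a case analysis based on which type of essential surface (sphere, disc, annulus, or torus) is produced by each of the 2-handle attachments, since by definition every degenerating curve must create at least one such surface. I would partition the argument into the six unordered pairs of surface types and handle each using sutured manifold machinery centered on the main theorem from \cite{T3}.

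First I would absorb the cases that are already in the literature. If $N[a]$ and $N[b]$ are both reducible, \cite{ZQL} already gives $|a\cap b|\leq 4$. If one is reducible and the other boundary-reducible, \cite{SW} gives either $|a\cap b|=0$ or $a,b$ share a once-punctured torus in $F$; the "basic" hypothesis forbids the latter. Similarly \cite{LQZ} settles the both-boundary-reducible case when $\genus(F)=2$ and both curves are separating. The remaining cases to attack involve at least one essential annulus or essential torus, and these are the cases where sutured manifold theory is expected to be decisive.

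For these new cases, the approach is to fix one degenerating curve, say $b$, and choose a sutured manifold structure on $N$ whose sutures on $F$ are a collection of parallel copies of $b$ together with auxiliary curves chosen so that the complementary pieces of $F$ are taut in the appropriate sense. Given an essential surface $S\subset N[a]$ realizing the degeneracy of $N[a]$, cut $S$ open by $\beta$ (the cocore arc for $b$) and by $F$ to obtain a surface in $N$ whose boundary on $F$ meets $b$ in a number of points controlled by $|a\cap b|$. Feeding this decomposed surface into the theorem of \cite{T3} about 2-handle attachment to a suture should yield a dichotomy: either $|a\cap b|$ is bounded by a small constant (ideally $5$), or there is a topologically exceptional surface (a disc, annulus, or once-punctured torus) in $F$ that geometrically links $a$ and $b$. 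The basic hypothesis is designed precisely to exclude the once-punctured-torus outcome, and the disc/annulus outcomes should contradict simplicity of $N$.

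The hardest step, I expect, is the case where the only essential surfaces in $N[a]$ and $N[b]$ are tori. Here the reducible/boundary-reducible guarantees are absent, and the Thurston-norm bookkeeping coming out of the hierarchy becomes tight. One would need to combine the Eudave-Mu\~noz intersection bounds between an essential torus and $\beta$ with the new sutured manifold inequality to rule out the high-intersection configurations. Absent a genuinely new input for this essential-torus versus essential-torus case, I would expect the method to prove the conjecture in every case except possibly this one, which matches the paper's claim only to give "evidence" for the Scharlemann-Wu conjecture rather than a full proof.
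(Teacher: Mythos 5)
The statement you were handed is a \emph{conjecture}: the paper does not prove it, and says so explicitly (``The next theorem gives some additional evidence for their conjecture''). What the paper actually proves in this direction is Corollary~\ref{Cor-ScharlWu}, a narrow special case: if $b$ is separating and $N[b]$ is reducible or a solid torus, then $N[a]$ is irreducible and boundary-irreducible, any degenerating $a$ is non-separating with $|a \cap b| = 2$, and an essential annulus in $N[a]$ can be chosen with boundary disjoint from $b$. The proof there does resemble your sketch in spirit: take an essential annulus or torus $\ob{R}$ in $N[a]$, pass to a surface $\ob{Q} \leq \ob{R}$ minimal with respect to $b$-boundary compressions, apply Theorem~\ref{Thm: Main Theorem} through Propositions~\ref{Thm: Comparing 1} and \ref{Thm: Comparing 2}, and read off $0 = -\epsilon\chi(\ob{Q}) \geq q(|a\cap b|-2) + q^*(|a^*\cap b|-2) + |\boundary \ob{Q}\cap b|$ term by term. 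But it only bites under the specific reducibility and separating hypotheses.

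Your summary of the residual cases is too optimistic in two places. First, the ``basic'' hypothesis does not flatly forbid the common-once-punctured-torus outcome of \cite[Theorem 4.2]{SW}: basicness excludes a \emph{degenerating} separating curve bounding a once-punctured torus containing the relevant curve, whereas \cite{SW} only guarantees a once-punctured torus whose boundary need not itself be degenerating, so that case is not automatically dismissed. Second, the cases absorbed from the literature leave much more open than the torus-versus-torus case you flag: \cite{LQZ} treats both-boundary-reducible only when $\genus(F)=2$ and both curves separate, so the non-separating and mixed subcases remain; and the entire regime where neither $N[a]$ nor $N[b]$ is reducible is untouched by the hypotheses driving Propositions~\ref{Thm: Comparing 1}--\ref{Thm: Comparing 2}, which need reducibility, a solid torus, or a norm-increasing class to get off the ground. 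Your closing observation that the paper only claims ``evidence'' is exactly right; the earlier claim that the method would prove the conjecture ``in every case except possibly'' torus-torus does not survive contact with the details.
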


The next theorem gives some additional evidence for their conjecture. Its proof is logically independent from the prior results on 2-handle addition.

\begin{cor-ScharlWu}
Suppose that $N$ is a compact orientable simple 3-manifold and that $F \subset \boundary N$ is a genus two boundary component. Let $a,b \subset F$ be essential simple closed curves with $|a \cap b| \geq 1$. Assume that $b$ is separating and that $N[b]$ is reducible. Then $N[a]$ is irreducible and boundary-irreducible and if $a$ is a degenerating curve, then $a$ is non-separating and $|a \cap b| = 2$. Furthermore, if $N[a]$ contains an essential annulus, then it contains one with boundary disjoint from $b \cap \boundary N[a]$.
\end{cor-ScharlWu}

\subsection{Refilling Meridians and Boring}
If $N$ can be embedded in a 3--manifold $M$ so that the genus 2 component $F \subset \boundary N$ bounds a handlebody $W$ in $M - N$ and if the curves $a$ and $b$ bound discs $A$ and $B$ in $W$ we say that the 3--manifolds $N[a]$ and $N[b]$ are obtained by \defn{refilling} the meridians $A$ and $B$ respectively. We denote the core or cores of the solid torus or tori obtained from boundary reducing $W$ using $A$ by $L_a$ and similarly define the knot or link $L_b$. If $A$ and $B$ cannot be isotoped to be disjoint we say that $L_a$ and $L_b$ are related by \defn{boring} and that $L_a$ is obtained by boring $L_b$ (and vice versa). The arcs $\alpha$ and $\beta$ that are the cocores of $\eta(A)$ and $\eta(B)$ in $N[a]$ and $N[b]$ are called \defn{boring arcs}.

The paper \cite{T1} proves several theorems about knots and links obtained by boring a split link. In most ways, the results of this paper supercede that paper; however, the methods of that paper are of interest in their own right and are reasonably effective at studying homology classes $y \in H_2(N[b],\boundary N[b])$ such that the projection of $\boundary y$ to $\boundary_0 N[b]$ is zero. The techniques of this paper are not very useful for drawing conclusions about such classes when $b \subset F$ is separating.

Scharlemann \cite{MSc5} conjectured that if $M$, $N$, and $W$ satisfy certain rather mild hypotheses than at least one of $N[a]$ or $N[b]$ is irreducible or boundary-irreducible. In \cite{T1}, considerable progress was made on the conjecture (actually a minor variation of it). The only significant case remaining was to prove that if $M$, $N$, and $W$ satisfy certain mild conditions then it is impossible for both $N[a]$ and $N[b]$ to be solid tori. One corollary of Theorem \ref{Thm: MSC 2} is:
\begin{corollary-MSC 2}
Assume that $W$ is a genus 2 handlebody embedded in a compact, orientable 3--manifold $M$ such that $M$ contains no lens space connected summands, any pair of curves in $\boundary M$ that compress in $M$ are isotopic in $M$, and $N = M - \inter{W}$ is irreducible and boundary-irreducible. If $a$ and $b$ are essential closed curves on $\boundary W$ bounding discs in $W$ that cannot be isotoped to be disjoint then one of the following occurs:
\begin{enumerate}
\item One of $N[a]$ and $N[b]$ is irreducible and boundary-irreducible
\item There exists an essential annulus $A \subset N$ such that one component of $\boundary A$ lies on a component of $\boundary M$ and the other lies on $\boundary W$, is disjoint from $a$ or $b$ and bounds a disc in $W$.
\end{enumerate}
\end{corollary-MSC 2}
Thus, the only case of Scharlemann's conjecture that remains unanswered is when there exists an essential annulus in $N$ with one boundary component on $\boundary M$ and the other on either $a$ or $b$.

\subsection{Rational Tangle Replacement}

A special case of the operation of boring is rational tangle replacement. Consider a 2-tangle $(D,\tau)$ embedded in $S^3$ and consider rational tangles $(D',r_a)$ and $(D',r_b)$ such that $D' = S^3 - \inter{D}$ and $L_a = r_a \cup \tau$ and $L_b = r_b \cup \tau$ are each knots or 2-component links. We say that $L_a$ and $L_b$ are obtained from each other by \defn{rational tangle replacement}. The genus two handlebody $W = D' \cup \eta(\tau)$ contains discs $A$ and $B$ such that reducing $W$ along $A$ creates a regular neighborhood of $L_a$ and reducing $W$ along $B$ creates a regular neighborhood of $L_b$. (The discs $A$ and $B$ are the discs in $D'$ separating the strands of $r_a$ and $r_b$.) Assume that $A$ and $B$ have been properly isotoped in $D' - \boundary \tau$ to minimize $d = |A \cap B|$. The number $d$ is called the \defn{distance} of the rational tangle replacement. It coincides with the usual notion of distance between the rational tangles $(D',r_a)$ and $(D',r_b)$. Letting $a = \boundary A$ and $b = \boundary B$, we also have $d = |a \cap b|/2$. As usual, let $\alpha$ and $\beta$ be the cocores of the 2-handles with cores $A$ and $B$ respectively. 

Performing a crossing change on a knot or 2-component link is a rational tangle replacement of distance 2 and attaching a (possibly twisted) band to a knot or 2-component link is a rational tangle replacement of distance 1. If a crossing change on a non-trivial knot $K$ converts it into the unknot then $K$ has \defn{unknotting number one} and if $K$ is a knot formed by attaching a band to the components of a split link then $K$ is the \defn{band sum} of the components of the split link. 

For somewhat technical reasons, rational tangle replacement is particularly amenable to study by the techniques of this paper. We give three examples. The first is a new proof of an old theorem \cite{S1} of Scharlemann. A different sutured manifold theory proof of this result was given in \cite{ST}.

\begin{Cor-Unknotting}
Unknotting number one knots are prime.
\end{Cor-Unknotting}

The second example is a new proof of a result of Gabai and Scharlemann (proved in both cases using sutured manifold theory):

\begin{Cor-genusbandsum}
If a knot $K$ is the band sum of knots $K_1$ and $K_2$, then the genus of $K$ is at least the sum of the genera of $K_1$ and $K_2$.
\end{Cor-genusbandsum}

The advantage of our proofs is that the sutured manifold theory developed in \cite{T2} and here has a number of other applications and can give more-or-less unified proofs of these results. The previous proofs were all idiosyncratic, particulary in the combinatorics. The basis of each of sutured manifold theory proofs of these results centers on showing that one of the arcs defining the rational tangle replacement can be made disjoint from a minimal genus Seifert surface. The technology of this paper allows a fairly streamlined approach to proving such a result.

Our final application to rational tangle replacment shows that band sums satisfy the cabling conjecture of Gonz\'alez-Acu\~na and Short \cite{GS}. In fact, no surgery on a band sum produces a reducible 3-manifold. In addition to relying on the work of \cite{T3} its proof also relies on Scharlemann's work in \cite{S4}.

\begin{Cor-bandsumscc}
If a knot $K$ is formed by attaching a band to both components of a split link, then $K$ is not a cable knot and no Dehn surgery on $K$ produces a reducible 3--manifold.
\end{Cor-bandsumscc}

The proof is given in Section \ref{Band Sum}.

\subsection{Acknowledgements} Versions of the results of sections \ref{Degenerating}, \ref{Refilling}, and \ref{RTR} appear in \cite{T1}.  I am grateful to Marty Scharlemann for helpful conversations. Portions of the work in this paper were supported by a grant from the National Science Foundation.

\section{Sutured Manifolds}\label{Sutured Manifolds}
A sutured manifold $(N,\gamma)$ consists of a compact, orientable 3-manifold $N$ and a collection of oriented simple closed curves (called \defn{sutures}) $\gamma \subset \boundary N$,  such that:
\begin{itemize}
\item $\boundary N - \inter{\eta}(\gamma)$ consists of two (possibly disconnected) surfaces $R_- = R_-(\gamma)$ and $R_+ = R_+(\gamma)$ such that each component of $\gamma$ is adjacent to both $R_-$ and $R_+$. Let $R(\gamma) = R_-(\gamma) \cup R_+(\gamma)$. 
\item $R_-(\gamma)$ has an inward normal orientation and $R_+(\gamma)$ has an outward normal orientation.
\item Each component of $\boundary R(\gamma)$ has orientation induced from that of $R(\gamma)$ that coincides with the orientation of the adjacent component of $\gamma$.
\end{itemize}

The \defn{Thurston norm} of a connected surface $S$ is 
\[
x(S) = \max(-\chi(S),0).
\]
The Thurston norm of a disconnected surface is the sum of the Thurston norms of its components. An oriented surface $S$ is \defn{taut} if $S$ is incompressible  and if out of all surfaces with the same boundary as $S$ and homologous to $S$ in $H_2(M,\boundary S)$, the Thurston norm of $S$ is minimal. 

A sutured manifold $(M,\gamma)$ is \defn{taut} if $M$ is irreducible and if $R_-$ and $R_+$ are taut.

If $b \subset \boundary N$ is an simple closed curve, and if $Q \subset N$ is a properly embedded surface, then a \defn{$b$-boundary compressing disc} is a disc $D$ in $N$ with interior disjoint from $Q$ such that $\boundary D$ is the endpoint union of an arc embedded in $b$ and an arc embedded in $Q$. 

This paper gives a number of varied applications of the following theorem\footnote{In the statement below we have modified the statement of the theorem by specializing to the case when $F$ has genus 2 and $|\gamma| \in \{1,3\}$.} that was proven in \cite{T3}. 

\begin{theorem}\label{Thm: Main Theorem}
Suppose that $(N,\gamma)$ is a taut sutured manifold with a genus two boundary component $F$. Assume that $\gamma \cap F$ consists either of one separating curve or three non-separating curves. Let $b$ be a component of $\gamma \cap F$ and let $Q$ be a compact, orientable surface in $N$ such that:
\begin{itemize}
\item $|Q \cap b| \geq 1$
\item $\boundary Q$ intersects $\gamma \cap F$ minimally
\item No component of $Q$ is a sphere or a disc disjoint from $\gamma$.
\end{itemize}

Let $\beta$ be the cocore in $N[b]$ of a 2-handle attached along $b$. Then one of the following is true:
\begin{enumerate}
\item $Q$ has a compressing or $b$-boundary compressing disc.
\item $(N[b],\beta) = (M'_0,\beta'_0) \# (M'_1,\beta'_1)$ where $M'_1$ is a lens space and $\beta'_1$ is a core of a genus one Heegaard splitting of $M'_1$.
\item $(N[b],\gamma-b)$ is taut. The arc $\beta$ can be properly isotoped  to be embedded on a branched surface $B(\mc{H})$ associated to a taut sutured manifold hierarchy $\mc{H}$ for $N[b]$. There is also a proper isotopy of $\beta$ in $N[b]$ to an arc disjoint from the first decomposing surface in $\mc{H}$. That first decomposing surface can be taken to represent $\pm y$ for any given non-zero $y \in H_2(N[b],\boundary N[b])$.
\item 
\[
-2\chi(Q) + |Q \cap \gamma| \geq 2|Q \cap b|.
\]
\end{enumerate}
\end{theorem}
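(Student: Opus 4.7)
The plan is to use the Gabai--Scharlemann machinery of taut sutured manifold hierarchies, refined to simultaneously track the surface $Q$ and the 2-handle cocore $\beta$. The target is to build a complete taut hierarchy $\mc{H}$ for $(N,\gamma)$ whose first decomposing surface represents the prescribed class $\pm y \in H_2(N[b],\boundary N[b])$, while controlling the Euler-characteristic budget of $Q$ at every stage.

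First I would construct the hierarchy. Choose a Thurston-norm minimizing decomposing surface $S_0 \subset N$ representing (the pullback of) $y$, and extend to a complete hierarchy $(N,\gamma) = (N_0,\gamma_0) \too (N_1,\gamma_1) \too \cdots \too (N_n,\gamma_n)$ whose decomposing surface $S_i$ at each stage is \emph{conditioned} with respect to $Q$ in the sense of Scharlemann. At the same time, arrange $\beta$ to meet each $S_i$ in a controlled manner. Standard branched-surface technology then lets one push $\beta$ onto $B(\mc{H})$ and, in particular, off of $S_0$ --- this is precisely conclusion (3), provided $(N[b],\gamma-b)$ can be verified taut at the outset.

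Next I would track $Q$ through the hierarchy using the bookkeeping quantity
\[
\Phi(Q) = -2\chi(Q) + |Q \cap \gamma| - 2|Q \cap b|.
\]
At each stage, either the conditioned surface $S_i$ meets $Q$ essentially and $\Phi$ does not increase under double-curve sums and decomposition, or else $Q$ acquires a compressing or $b$-boundary compressing disc in $N_i$ which pulls back to give conclusion (1). At the terminal stage the pieces $(N_n,\gamma_n)$ are products, where $\Phi \ge 0$ is immediate; pulling the inequality back through the hierarchy yields conclusion (4). The exceptional branch is when tautness of $(N[b],\gamma - b)$ fails even though $(N,\gamma)$ is taut: the obstruction comes from a reducing sphere in $N[b]$ interacting with $\beta$, and a standard analysis of 2-handle addition to a taut sutured manifold (cf.\ Scharlemann's treatment of reducibility under 2-handle addition) forces the reducing sphere to split off a lens space factor in which $\beta$ is a genus-one Heegaard core, giving conclusion (2).

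The main obstacle will be the fine bookkeeping at each decomposition: arranging that $S_i$ can be chosen taut, conditioned with respect to $Q$, and simultaneously compatible with an isotopy of $\beta$ off $S_i$, while accounting for the arcs of $Q \cap \eta(b)$ which run between $R_-$ and $R_+$ (each contributes $2$ to the right-hand side of (4)). A secondary technical issue is the case $|\gamma \cap F| = 1$: deleting the single separating suture $b$ leaves $F$ without sutures in $N[b]$, so I would need to verify directly that the induced partition of $\boundary N[b]$ into $R_\pm(\gamma - b)$ still yields a taut sutured structure, which is exactly why the hypothesis restricts to the one-separating or three-non-separating configurations. Once this bookkeeping is in hand, conclusions (1)--(4) exhaust the possibilities at the first stage of the hierarchy where an obstruction arises.
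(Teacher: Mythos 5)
Note first that this paper does not prove Theorem~\ref{Thm: Main Theorem}: it is stated as a result imported from \cite{T3}, and the present paper only applies it. There is therefore no in-paper proof to compare your sketch against. Evaluated on its own terms, your high-level strategy (taut hierarchy, index tracking for $Q$, an exceptional lens-space branch) has the right shape, but two genuine gaps remain.

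Your hierarchy is built for $(N,\gamma)$, yet conclusion (3) is about a hierarchy $\mc{H}$ of $N[b]$, and $y$ lives in $H_2(N[b],\boundary N[b])$ with no canonical ``pullback'' to a class in $H_2(N,\boundary N)$ as your set-up assumes. The actual machinery of \cite{T3} is a \emph{band taut} sutured manifold structure that carries the distinguished suture $b$ and the cocore arc $\beta$ through a decomposition of $N[b]$, so that $Q \subset N$ (which meets $b$) can serve as a parameterizing surface for that hierarchy; without this structure the isotopy of $\beta$ onto $B(\mc{H})$ is unsupported and there is no translation between decompositions of $N$ and of $N[b]$. More seriously, the central claim --- that at each stage either $\Phi(Q) = -2\chi(Q) + |Q \cap \gamma| - 2|Q\cap b|$ does not increase or else $Q$ acquires a compressing or $b$-boundary compressing disc that ``pulls back'' to $N$ --- is precisely the hard content of \cite{T3} and is not a standard consequence of Scharlemann's index arguments: the $-2|Q\cap b|$ correction has no analogue in the usual parameterizing-surface index and needs the band-taut combinatorics to control, and a compressing disc appearing after several decompositions does not obviously survive the reassembly of $Q$. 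Asserting this behavior and gesturing at ``a standard analysis'' for the lens-space case identifies the architecture of the argument but supplies none of its substance.
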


\begin{remark}
The taut sutured manifold hierarchy $\mc{H}$ constructed in conclusion (3) is constructed so that the first surface (which represents $y$) is ``conditioned''. 
A compact, orientable surface $S$ in a sutured manifold $(N,\gamma)$ is \defn{conditioned} if all components of $S \cap \eta(\gamma)$ in each component of $\eta(\gamma)$ are coherently oriented arcs or circles intersecting $\gamma$ minimally and if no collection of components of $\boundary S \cap R(\gamma)$ is trivial in $H_1(R(\gamma), \boundary R(\gamma))$. Thus, if $N[b] \subset M$ is formed by refilling a meridian of a genus 2 handlebody $W$ with $\boundary W = F$, and if $L_b$ is null-homologous in $M$, then $y$ can be chosen so that the first surface of $\mc{H}$ is a minimal genus Seifert surface for $L_b$. This is discussed in greater detail in Section 10 of \cite{T3}.
\end{remark}

To apply Theorem \ref{Thm: Main Theorem} effectively, we need to find surfaces $Q$ having no compressing or $b$-boundary compressing discs in $N$. That is the topic of the next section.

\section{Suitably Embedded Surfaces}

Let $N$ be a compact, orientable 3-manifold with a genus 2 boundary component $F$. Suppose that $a,b \subset F$ are essential simple closed curves. In this section, we show how to use a given essential surface $\ob{R}$ in $N[a]$ to produce an incompressible and $b$-boundary incompressible surface $Q \subset N$. 

First, we analyze ways in which an essential surface $\ob{R} \subset N[a]$ intersects $N$. If $a$ is non-separating, there are multiple ways to obtain a manifold homeomorphic to $N[a]$. Certainly attaching a 2--handle to $a$ is one such way, but there are other ways. If $a^* \subset F$ is a simple closed curve disjoint from $a$ and bounding a once-punctured torus containing $a$, then attaching 2--handles to both $a^*$ and $a$ creates a manifold with a spherical boundary component. Filling in that sphere with a 3--ball creates a manifold homeomorphic to $N[a]$. We will often think of $N[a]$ as obtained in this fashion. If $a$ is separating, define $a^* = \nil$. 

Let $\ob{Q} \subset N[a]$ be an embedded surface and let $Q = \ob{Q} \cap N$. The surface $\ob{Q}$ is \defn{suitably embedded} if each component of $\boundary Q - \boundary \ob{Q}$ is a curve on $F$ parallel to $a$ or to some $a^*$. We denote the number of components of $\boundary Q - \boundary \ob{Q}$ parallel to $a$ by $q = q(\ob{Q})$ and the number parallel to $a^*$ by $q^* = q^*(\ob{Q})$. Let $\wihat{q} = q + q^*$. 

The next theorem is a particular case of \cite[Theorem 5.1]{T2}, however for completeness and clarity we sketch the proof.

\begin{theorem}\label{Thm: Constructing Q}
Suppose that $N$ is a 3--manifold with a genus two boundary component $F$ containing essential simple closed curves $a$ and $b$ that cannot be isotoped to be disjoint. Let $\ob{R} \subset N[a]$ be a suitably embedded essential surface and suppose that $R = \ob{R} \cap N$ has a compressing disc or $b$--boundary compressing disc $D$ and that $\wihat{q}(\ob{R}) > 0$. Then there exists an essential surface $\ob{Q} \subset N[a]$ such that the following hold:
\begin{enumerate}
\item If $D$ is a $b$--boundary compressing disc, $Q$ is obtained from $R$ by a $b$-boundary compression and possibly capping off inessential boundary components.
\item If $D$ is a compressing disc and if $N[a]$ is reducible, then $\ob{Q}$ is obtained by a trivial 2--surgery of $\ob{R}$ and the discarding of a sphere component. If $D$ is a compressing disc and if $N[a]$ is irreducible, then $\ob{Q}$ is obtained by a proper isotopy of $\ob{R}$.
\item $-\chi(\ob{Q}) \leq -\chi(\ob{Q})$
\item The sum of the genera of the components of $\ob{Q}$ is no more than the sum of the genera of the components of $\ob{R}$.
\item $\wihat{q}(\ob{Q}) < \wihat{q}(\ob{R})$.
\item If $D$ is a $b$--boundary compressing disc, unless $\boundary D \cap N$ is an arc joining a component of $\boundary R$ to itself, $\ob{Q}$ is properly isotopic to $\ob{R}$ in $N[a]$.
\end{enumerate}
\end{theorem}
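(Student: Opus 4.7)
The approach is to use $D$ to perform either a boundary compression or a 2-surgery on $R$, then rebuild a suitably embedded surface $\overline{Q} \subset N[a]$ by capping off any new boundary curves on $F$ parallel to $a$ or $a^*$ with meridional discs of the 2-handles defining $N[a]$ and discarding any inessential components. Essentiality of $\overline{Q}$ will then be inherited from that of $\overline{R}$ by a standard innermost-disc argument: a compressing disc or boundary parallelism for $\overline{Q}$ can be isotoped off the newly introduced capping discs and would then produce one for $\overline{R}$.

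In the $b$-boundary compressing case, perform the boundary compression on $R$ along $D$ to obtain $R'$, then cap off and discard as above. Since $\chi(R') = \chi(R)+1$ and the genus of no component of $R'$ exceeds that of the corresponding component of $R$, conclusions (3) and (4) are immediate. For (5), the arc $\boundary D \cap b$ has endpoints on components of $\boundary R \cap F$ parallel to $a$ or $a^*$, and the surgery reconnects these components through that arc of $b$; a case analysis on whether the endpoints lie on the same or on distinct components, combined with the hypothesis that $a$ (and $a^*$) cannot be isotoped off $b$ in $F$, shows that $\widehat{q}$ strictly decreases once the new parallel curves are capped off. Conclusion (6) follows because, when $\boundary D \cap R$ joins two distinct components of $\boundary R$, the boundary compression can be realized by a proper isotopy of $\overline{R}$ in $N[a]$ through the product region obtained by pairing the two parallel copies of $D$ appearing in $R'$ with their attendant capping meridional discs.

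For the compressing disc case, incompressibility of $\overline{R}$ in $N[a]$ implies that $\boundary D$ bounds a disc $D' \subset \overline{R}$, and since $\boundary D$ does not bound on $R$, the disc $D'$ must contain at least one meridional disc used in forming $\overline{R}$. Consider the sphere $S = D \cup D' \subset N[a]$. If $N[a]$ is irreducible, $S$ bounds a ball across which $\overline{R}$ can be properly isotoped to eliminate the meridional discs inside $D'$, strictly reducing $\widehat{q}$ while preserving $-\chi$ and the genus sum. If $N[a]$ is reducible, perform a trivial 2-surgery on $\overline{R}$ along $D$ and discard the resulting sphere component; the meridional discs in $D'$ are again lost, so $\widehat{q}$ drops. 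I expect the main obstacle to be the bookkeeping for (5) in the boundary-compression case: ruling out the possibility that the compression splits a single boundary curve parallel to $a$ into two such parallel curves (which would naively increase $\widehat{q}$) uses essentially the hypothesis that $a$ and $b$ cannot be isotoped disjoint in $F$, together with the fact that $\boundary R$ has been arranged to meet $b$ minimally.
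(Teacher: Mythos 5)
Your general approach — use $D$ to compress or boundary-compress, rebuild a suitably embedded surface by capping off or absorbing meridian discs, and discard inessential components — is the same as the paper's, and your treatment of the compressing-disc case matches the paper's proof essentially verbatim (the sphere $D \cup D'$ either bounds a ball or gives a trivial $2$-surgery). However, there is a genuine gap in the $b$-boundary compression case.

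You assert that ``the arc $\boundary D \cap b$ has endpoints on components of $\boundary R \cap F$ parallel to $a$ or $a^*$,'' but this is not a consequence of the hypotheses: $\boundary \ob{R}$ may itself meet $b$, so the arc $\delta = \boundary D \cap b$ can have one or both endpoints on components of $\boundary \ob{R}$ lying on $F$ rather than on the ``new'' curves $\boundary R - \boundary \ob{R}$. The paper's proof opens with exactly this dichotomy: if both endpoints are on $\boundary \ob{R}$, then $D$ is a boundary-compression for $\ob{R}$ in $N[a]$ along $\boundary_0 N[a]$, forcing $\ob{R}$ to be compressible or a boundary-parallel annulus — a contradiction; if one endpoint is on $\boundary \ob{R}$ and the other on a curve parallel to $a$ or $a^*$, then there is a proper isotopy of $\ob{R}$ across $D$ reducing $\wihat{q}$. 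Your argument never addresses these possibilities, so your verification of (5) is incomplete, and it is precisely in these omitted cases that the ``boundary compress then cap off'' recipe is the wrong move (an isotopy, not a surgery, is what is required).

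Relatedly, your verification of (6) is too coarse. The paper establishes (6) by showing that in every case where $\delta$ joins distinct components — two components parallel to $a$ inside an annulus of $F - \boundary R$, two components parallel to $a^*$, two components parallel to $a$ not inside an annulus, or one parallel to $a$ and one to $a^*$ — the move on $\ob{R}$ is actually a proper isotopy in $N[a]$ (in some subcases it removes two curves and introduces a new one parallel to $a$ or $a^*$, which still reduces $\wihat{q}$ but does not reduce it in the naive ``cap off two new curves'' way you describe). Only when $\delta$ joins a single component $c$ of $\boundary R - \boundary \ob{R}$ to itself does the paper actually boundary-compress $\ob{R} - \inter{C}$, and in that case it verifies essentiality by showing a compressing disc for $\ob{Q}$ would yield one for $\ob{R}$, and that boundary-parallelism of $\ob{Q}$ would — via the tubing arc that recovers $\ob{R} - \inter{C}$ — force $\ob{R}$ to be boundary-parallel or compressible. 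Your ``standard innermost-disc argument'' gestures at incompressibility but does not address the boundary-parallelism half. To repair the proof you need the full case analysis on where the endpoints of $\delta$ lie, and you need to recognize that most cases are handled by isotopies of $\ob{R}$ in $N[a]$ rather than genuine boundary-compressions.
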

\begin{proof}
If $D$ is a compressing disc for $R$ then, since $\ob{R}$ is essential, there is a disc $D' \subset \ob{R}$ with $\boundary D' = \boundary D$. If $N[a]$ is irreducible, $D \cup D'$ bounds a 3--ball in $N[a]$ and isotoping $\ob{R}$ across the 3--ball reduces $\wihat{q}$. If $N[a]$ is reducible, use $D$ to perform a trivial 2--surgery on $\ob{Q}$; discard the 2--sphere component of the resulting surface that contains $D'$. 

Let $D$ be a $b$-boundary compressing disc for $R$ and let $\delta = \boundary D \cap \boundary N$. Since $\wihat{q}(\ob{R}) > 0$, if $\delta$ joins two components of $\boundary \ob{R}$, then $D$ is a boundary compressing disc for $\ob{R}$ in $N[a]$. In this case, since $\delta$ would lie on $\boundary_0 N[a]$, $\ob{R}$ would be either compressible or a boundary-parallel annulus. Either possibility contradicts the assumption that $\ob{R}$ is essential in $N[a]$.

If $\delta$ joins a component $c$ of $\boundary \ob{R}$ to a component $d$ of $\boundary R$ that is parallel to $a^*$ or $a$, there is a proper isotopy of $\ob{R}$ across $D$ that moves $c$ across a copy of $d$ in $\boundary_0 N[a]$. This reduces $\wihat{q}$.

Assume that $\delta$ joins distinct components of $\boundary R - \boundary \ob{R}$. If $\delta$ joins components both parallel to $a$ and lies in an annulus component of $F - \boundary R$, then there is an isotopy of $\alpha$ across $D$ through $\ob{R}$ reducing $\wihat{q}$. Reversing the isotopy gives an isotopy of $\ob{R}$ reducing $\wihat{q}$. Similarly, if $\delta$ joins two components parallel to $a^*$, there is an isotopy of $\wihat{R}$ reducing $\wihat{q}$. If $\delta$ joins two components of $\boundary R$, both parallel to $a$, but does not lie in an annulus component of $F - \boundary R$, then isotoping $\ob{Q}$ across $D$ removes those two components of $\boundary R$ and introduces a third parallel to some $a^*$. If $\delta$ joins a component parallel to $a^*$ to a component parallel to $a$, then there is an isotopy of $\ob{R}$ that removes those two components of $\boundary R$ and introduces a third parallel to either $a$ or $a^*$. Thus, $\wihat{q}$ is reduced by an isotopy of $\ob{R}$ in all cases where $\delta$ joins distinct components of $\boundary R - \boundary \ob{R}$.

Assume that $\delta$ joins a component $c$ of $\boundary R - \boundary \ob{R}$ to itself. Let $C \subset \eta(\alpha)$ be the disc bounded by $c$. Boundary compress $\ob{R} - \inter{C}$ using $D$. Let $\ob{Q}$ be the resulting surface. Clearly, conclusions (1) - (4) hold. We need only show that $\ob{Q}$ is essential. Any compressing disc for $\ob{Q}$ could be isotoped to be disjoint from $\eta(D)$ and so would be a compressing disc for $\ob{R}$, contrary to hypothesis. If $\ob{R} - \inter{C}$ can be obtained from $\ob{Q}$ by tubing $\ob{Q}$ to itself using an arc $\zeta$ lying on $\boundary N[a]$ with endpoints on $\boundary \ob{Q}$. If $\ob{Q}$ is boundary-parallel with region of parallelism $P$, then if $\zeta$ is outside $P$, $\ob{R}$ would also be boundary parallel and if $\zeta$ is interior to $P$, $\ob{R}$ would be compressible. Both contradict the hypothesis that $\ob{R}$ is essential in $N[a]$.
\end{proof}

Suppose that $\ob{R}$ and $\ob{Q}$ are suitably embedded essential surfaces in $N[a]$ and that $\ob{Q}$ is a non-empty surface obtained from $\ob{R}$ by applying Theorem \ref{Thm: Constructing Q} to $\ob{R}$ and possibly discarding one or more components. Then we say that $\ob{Q} < \ob{R}$. If for a given $\ob{R}$, either $\wihat{q}(\ob{R}) = 0$ or there does not exist an essential surface in $N[a]$ such that $\ob{Q} < \ob{R}$, then we say that $\ob{R}$ is \defn{minimal} with respect to $b$-boundary compressions. Extend the relation $<$ so that it is transitive.

The next lemma records some easy facts.
\begin{lemma}
The following are true:
\begin{enumerate}
\item If $\ob{R} \subset N[a]$ is an essential properly embedded surface, then there exists an essential suitably embedded surface $\ob{Q} \subset N[a]$, minimal with respect to $b$-boundary compressions, such that $\ob{Q} \leq \ob{R}$.
\item Suppose that $\ob{Q} \subset N[a]$ is minimal with respect to $b$-boundary compressions but is not $b$-boundary incompressible. Then $\wihat{q}(\ob{Q}) = 0$ and either there is a proper isotopy of $\ob{Q}$ in $N[a]$ that reduces $|\boundary \ob{Q} \cap b|$ or every $b$-boundary compressing disc for $\ob{Q}$ has boundary intersecting $a$.
\end{enumerate}
\end{lemma}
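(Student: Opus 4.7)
Part (1) follows by a direct induction on $\wihat{q}$. Given essential $\ob{R}\subset N[a]$, first make it suitably embedded by isotoping it transversely to the cocores of the 2--handles attached along $a$ (and, when $a$ is non-separating, along $a^{*}$). If $\wihat{q}(\ob{R})=0$ or if $R=\ob{R}\cap N$ admits no compressing or $b$--boundary compressing disc, then $\ob{R}$ is already minimal and we set $\ob{Q}=\ob{R}$. Otherwise, Theorem~\ref{Thm: Constructing Q} produces an essential suitably embedded surface $\ob{R}_1<\ob{R}$ with $\wihat{q}(\ob{R}_1)<\wihat{q}(\ob{R})$. Iterate; since $\wihat{q}$ is a non-negative integer strictly decreasing at each step, after finitely many iterations we arrive at a minimal essential surface $\ob{Q}$, and transitivity of $<$ yields $\ob{Q}\leq\ob{R}$.

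For the first assertion of part (2), suppose $\ob{Q}$ is minimal but $Q=\ob{Q}\cap N$ admits a $b$--boundary compressing disc $D$. If $\wihat{q}(\ob{Q})>0$, then all hypotheses of Theorem~\ref{Thm: Constructing Q} hold for $\ob{Q}$ together with $D$, yielding an essential $\ob{Q}'<\ob{Q}$ and contradicting minimality. Hence $\wihat{q}(\ob{Q})=0$, so $\ob{Q}\subset N$ and $\boundary\ob{Q}\subset\boundary N-\eta(a)\subset\boundary N[a]$; in particular $\boundary\ob{Q}$ is disjoint from $a$.

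For the dichotomy, assume some $b$--boundary compressing disc $D$ for $Q$ has $\boundary D\cap a=\nil$, and write $\boundary D=\delta\cup\epsilon$ with $\delta\subset b$ and $\epsilon\subset\ob{Q}$. Since $\delta\cap a=\nil$, the arc $\delta$ lies in $(b-\eta(a))\subset\boundary_0 N[a]$, so $D$ is a disc in $N[a]$ with interior disjoint from $\ob{Q}$ and boundary partitioned into $\delta\subset\boundary N[a]$ and $\epsilon\subset\ob{Q}$; the two endpoints of $\delta$ are precisely two transverse intersection points of $\boundary\ob{Q}$ with $b$. Choose a product regular neighborhood $\eta(D)\cong D\times[-1,1]$ in $N[a]$, small enough that $\eta(D)\cap\boundary N[a]$ is a thin collar of $\delta$ meeting $b$ only along $\delta$ itself and $\eta(D)\cap\ob{Q}$ is a thin collar of $\epsilon$. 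An ambient isotopy of $N[a]$ supported in $\eta(D)$ that pushes the collar of $\epsilon$ through the interior of $D$ onto a rectangle slightly inside the collar of $\delta$ in $\boundary N[a]$ provides a proper isotopy of $\ob{Q}$; the move is well-defined because the interior of $D$ is disjoint from $\ob{Q}$. Its effect on $\boundary\ob{Q}$ is to displace a small arc through each endpoint of $\delta$ transversely off of $b$, removing these two points from $\boundary\ob{Q}\cap b$ and, by the choice of $\eta(D)$, introducing no new intersections. Thus $|\boundary\ob{Q}\cap b|$ decreases by $2$. The main technical point is confirming that no new intersections are created, which uses both $\delta\cap a=\nil$ (so that $\delta$ lies in a single arc of $b-a\subset\boundary N[a]$) and the smallness of $\eta(D)$.
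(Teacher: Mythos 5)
Your proof is correct and follows essentially the same route as the paper's. For part (1) both arguments iterate Theorem~\ref{Thm: Constructing Q}, using the strict decrease of $\wihat{q}$ as the terminating quantity; for the first assertion of part (2) both observe that $\wihat{q}(\ob{Q})>0$ together with a $b$--boundary compressing disc would let Theorem~\ref{Thm: Constructing Q} produce a smaller essential surface, contradicting minimality. For the dichotomy in part (2) the paper is terser: it simply asserts that a $b$--boundary compressing disc disjoint from $a$ ``either gives a method for reducing $|\boundary\ob{Q}\cap b|$ or shows that $\ob{Q}$ is inessential,'' ruling out the latter by essentiality. You instead explicitly build the reducing isotopy by isotoping $\ob{Q}$ across a regular neighborhood of $D$, observing that the endpoints of $\delta$ are removed from $\boundary\ob{Q}\cap b$ and that, because $\eta(\delta)$ meets the embedded curve $b$ only along $\delta$ itself, no new intersections appear. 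This spells out concretely what the paper leaves as an assertion, and, as a byproduct, shows that the ``inessential'' alternative in the paper's dichotomy never actually needs to be invoked.
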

\begin{proof}
To prove (1), simply notice that each application of Theorem \ref{Thm: Constructing Q} reduces $\wihat{q}$ and that the theorem cannot be applied if $\wihat{q} = 0$.

To prove (2), suppose that $\ob{Q} \subset N[a]$ is minimal with respect to $b$-boundary compressions, but has a $b$-boundary compression. Since we cannot apply Theorem \ref{Thm: Constructing Q} we must have $\wihat{q}(\ob{Q}) = 0$. If there were a $b$-boundary compressing disc disjoint from $a$, then $\ob{Q}$ would be $b$-boundary compressible in $N[a]$. A $b$-boundary compressing disc for $\ob{Q}$ adjacent to $\boundary_0 N[a]$ and disjoint from $a$ either gives a method for reducing $|\boundary \ob{Q} \cap b|$ or shows that $\ob{Q}$ is inessential. The latter is impossible.
\end{proof}

It will occasionally be useful to double $N$ along boundary components other than $F$. We make the following definitions. Let $D_aN$ denote the result of gluing a copy of $N_2 = N[a]$ to $N_1 = N$ using the identity map on $\boundary_1 N_1$. Let $F_1$, $a_1$, and $b_1$ denote the copies of $F$, $a$, and $b$ in $N_2$.  Suppose that $\ob{Q} \subset N[a]$ is a suitably embedded surface. Let $\ob{Q}'$ denote the result of isotoping $\ob{Q}$ in $N[a]$ to be properly embedded. Let $\ob{Q}_1$ be the copy of $\ob{Q}$ in $N_1$ and let $\ob{Q}'_2$ be the copy of $\ob{Q}'$ in $N_2$. Let $D_a\ob{Q} = \ob{Q}_1 \cup \ob{Q}'_2 \subset D_a N$. Notice that $D_a\ob{Q}$ is a suitably embedded surface in $D_aN[a_1]$. Let $D_a Q = D_a \ob{Q} \cap D_a N$.

\begin{lemma}\label{Lem: Doubled Param}
Suppose that $N[a]$ is irreducible and boundary-irreducible. Suppose that $\ob{Q} \subset N[a]$ is a suitably embedded surface such that $Q$ has no compressing or $b$-boundary compressing disc in $N$. Assume also that there is no boundary compressing disc for $Q$ adjacent to $\boundary_1 N$. Then $D_a N$ is irreducible and there is no compressing or $b_1$-boundary compressing disc for $D_a Q$ in $D_a N$.
\end{lemma}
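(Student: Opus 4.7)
My strategy handles the three conclusions---irreducibility of $D_aN$, and the absence of compressing and $b_1$-boundary compressing discs for $D_aQ$---uniformly. For a hypothetical sphere or (boundary-)compressing disc $X \subset D_aN$, I place $X$ transverse to the doubling surface $\Sigma := \boundary_1 N$ (the identification locus of $N_1 = N$ and $N_2 = N[a]$ inside $D_aN$), and then use innermost-disc and outermost-arc exchanges to remove the circles and arcs of $X \cap \Sigma$ until $X$ lies entirely in one of $N_1$ or $N_2$, where the hypotheses of the lemma or the irreducibility/boundary-irreducibility of $N[a]$ yield a contradiction.

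\textbf{Preliminaries.} I first check that $\Sigma$ is incompressible on each side. In $N_2 = N[a]$ this is part of boundary-irreducibility; in $N_1 = N$, any compressing disc for $\Sigma$ in $N$ would, via the inclusion $N \subset N[a]$, remain a compressing disc in $N[a]$, contradicting the same assumption. I also need $N$ itself to be irreducible, which I derive from irreducibility of $N[a]$: a reducing sphere $S \subset N$ bounds a ball $B$ in $N[a]$, and by case analysis on whether the cocore $\alpha$ of the 2-handle attached at $a$ lies inside or outside $B$, one sees that $S$ bounds a ball in $N$ as well.

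\textbf{The three assertions.} To show $D_aN$ is irreducible, I take a sphere $S \subset D_aN$ transverse to $\Sigma$ and repeatedly apply innermost-disc surgery: an innermost circle of $S \cap \Sigma$ bounds a disc $\Delta \subset S$ whose boundary must be trivial on $\Sigma$ by incompressibility, allowing an isotopy of $S$ across a disc on $\Sigma$ that reduces $|S \cap \Sigma|$. After finitely many such moves $S \cap \Sigma = \nil$, so $S$ lies in a single irreducible $N_j$ and bounds a ball. For a compressing or $b_1$-boundary compressing disc $E \subset D_aN$, intersection circles of $E \cap \Sigma$ are removed as above, and intersection arcs are addressed by outermost-arc surgery: an outermost arc cuts off a subdisc $E' \subset N_j$ bounded by an arc on $D_aQ$ and an arc on $\Sigma$. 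If the arc on $D_aQ$ is inessential in $Q_j \subset D_aQ$, the subdisc $E'$ reduces $|E \cap \Sigma|$; otherwise $E'$ is a compressing disc for $Q_j$ in $N_j$ or a boundary-compressing disc for $Q_j$ adjacent to $\Sigma$. When $j = 1$, these alternatives directly contradict the hypotheses that $Q$ has no compressing disc in $N$ and no boundary compressing disc adjacent to $\boundary_1 N$. When $j = 2$, a similar contradiction is obtained by translating $E'$ into a corresponding disc in $N$ by eliminating its intersections with the 2-handle attached at $a$. After all arcs are processed, the surviving $E$ sits in a single $N_j$ and the same hypotheses yield a contradiction. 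In the $b_1$-compressing case, the arc of $\boundary E$ on $b_1 \subset F_1 \subset \boundary N_2$ forces the final $E$ to lie in $N_2 = N[a]$, producing a $b$-boundary compressing disc for $\ob Q$ in $N[a]$ that translates, via the same pushoff, to one for $Q$ in $N$.

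\textbf{Main obstacle.} The main technical point is the translation from forbidden discs in $N_2 = N[a]$ to forbidden discs in $N$: given a compressing or ($b$- or $\boundary_1$-)boundary-compressing disc for $\ob Q$ in $N[a]$, one must find a corresponding disc for $Q$ in $N$ by making it disjoint from the 2-handle attached at $a$. I expect to handle this by intersection-minimization with the cocore $\alpha$, using the incompressibility and $b$-boundary incompressibility of $Q$ in $N$ together with the hypothesis on discs adjacent to $\boundary_1 N$ to eliminate each intersection circle or arc with $\boundary \eta(\alpha)$.
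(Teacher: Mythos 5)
Your proposal follows essentially the same strategy as the paper's: isotope the putative reducing sphere or (boundary-)compressing disc for $D_aQ$ to meet $\boundary_1 N$ minimally, remove circles of intersection using incompressibility of $\boundary_1 N$ and irreducibility of both halves, and extract from an outermost arc a subdisc $E'$ lying in one half that contradicts the hypotheses on $Q$. You are more explicit than the paper at exactly the points it elides---deriving irreducibility of $N$ from that of $N[a]$, writing out the plain compressing-disc case (the paper only records the $b_1$-boundary case), and, most usefully, flagging the translation of a forbidden disc for $\ob{Q}'$ in $N_2 = N[a]$ into one for $Q$ in $N$ by pushing it off $\eta(\alpha)$, a step the paper invokes tacitly when it asserts $E \cap \boundary_1 N \neq \nil$---and your sketch for that translation (intersection-minimization with the cocore, using suitable embeddedness together with the incompressibility and $b$-boundary-incompressibility of $Q$) is the right idea.
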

\begin{proof}
Since $N[a]$ and $N$ are irreducible and since $\boundary N[a]$ is incompressible, any reducing sphere for $D_a N$ could be isotoped to lie in $N$ or $N[a]$, a contradiction.

Suppose that $E$ is a $b$-boundary compressing disc for $D_a Q$, chosen so that out of all such, $|E \cap \boundary_1 N|$ is minimal. Since $Q$ is $b$-boundary incompressible, $E \cap \boundary_1 N$ is non-empty. An innermost disc argument, using the incompressibility of $\boundary_1 N$, shows that $E \cap \boundary_1 N$ consists of arcs. There is an arc bounding a disc $E' \subset E$ with interior disjoint from $\boundary_1 N$ and with boundary disjoint from $F$. Then $E'$ is a boundary compressing disc for $\ob{Q}$ in $D_a N$. 
\end{proof}

\section{Adding sutures to $\boundary N$}

To use Theorem \ref{Thm: Main Theorem} to prove statements that don't use the language of sutured manifolds requires methods of placing a sutured manifold structure on 3-manifolds with boundary. We explain how to do that in this section. Let $F \subset \boundary N$ be a genus two component. Let $b \subset F$ be an essential simple closed curve. We begin by discussing sutures $\wihat{\gamma} \cup b$ on $F$.

If $b$ is separating, define $\wihat{\gamma} = \nil$. If $b$ is non-separating, define $\wihat{\gamma}$ to be any pair of essential disjoint simple closed curves on $\boundary_0 N[b]$ that are disjoint from and separate the components of $\boundary \eta(b)$. Thus, $F - (\wihat{\gamma} \cup b)$ is either the union of two once punctured tori or the union of two thrice-punctured spheres.

\begin{lemma}\label{Lem: Torus Sutures}
Suppose that $\boundary N - F$ is empty or consists of tori. Let $\wihat{\gamma}$ and $b$ be simple closed curves on $F$ as above. If $N$ is irreducible and if $F - (\wihat{\gamma} \cup b)$ is incompressible in $N$ then $(N,\wihat{\gamma} \cup b)$ is a taut sutured manifold.
\end{lemma}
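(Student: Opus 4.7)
The plan is to verify the two ingredients in the definition of a taut sutured manifold: that $N$ is irreducible (immediate from the hypothesis) and that each of $R_+$ and $R_-$ is a taut surface, meaning incompressible in $N$ and Thurston-norm minimizing in $H_2(N,\boundary R_\pm)$. I would first identify $R_+$ and $R_-$. Because $\wihat{\gamma}$ is chosen to separate the two components of $\boundary \eta(b)$ on $F$, the two components of $F - (\wihat{\gamma} \cup b)$ sit on opposite sides of every suture and can be coherently oriented, one as $R_+$ and the other as $R_-$; any torus components of $\boundary N - F$ are assigned to $R_+$ or $R_-$ in whichever way the orientation conventions force. So $R_+$ and $R_-$ are each a once-punctured torus (if $b$ separates) or thrice-punctured sphere (if $b$ is non-separating), possibly together with some tori.

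Incompressibility of the once-punctured-torus or thrice-punctured-sphere component is exactly the incompressibility hypothesis on $F - (\wihat{\gamma} \cup b)$, interpreted in the standard $\pi_1$-injective sense. For any torus components of $\boundary N - F$, I would appeal to Taylor's implicit setup that such tori are incompressible. Torus components contribute $0$ to the Thurston norm and are automatically norm-minimizing.

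The real content is showing that the non-torus component $P$ of $R_\pm$, with $-\chi(P) = 1$, is Thurston-norm minimizing. Suppose toward contradiction that $S$ is a representative of $[P] \in H_2(N,\boundary P)$ with $x(S) < 1$. Since $N$ is irreducible, sphere components of $S$ bound balls and may be discarded. Any remaining component with $-\chi < 0$ must be a disk, and its boundary is a component of $\boundary P$, i.e.\ a simple closed curve parallel on $F$ to some $\sigma \in \wihat{\gamma} \cup b$. The crux is to show no component of $\wihat{\gamma} \cup b$ bounds a disk in $N$: given such a hypothetical disk for $\sigma$, I push its boundary a small distance into the adjacent piece of $F - (\wihat{\gamma} \cup b)$ to obtain a disk whose boundary $c$ is parallel to $\sigma$ inside that piece. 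Although $c$ is boundary-parallel in the piece, it is not null-homotopic there (it represents a boundary generator of $\pi_1$ of a once-punctured torus or pair of pants), so the $\pi_1$-injectivity of $F - (\wihat{\gamma} \cup b) \hookrightarrow N$ contradicts the existence of the disk. With disk components of $S$ ruled out, a brief Euler-characteristic check over the possible ways the one or three circles of $\boundary P$ can be distributed among the components of $S$ forces $x(S) \geq 1$, the desired contradiction.

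The one delicate point is the push-off step: one needs the incompressibility hypothesis to be interpreted as $\pi_1$-injectivity of the inclusion $F - (\wihat{\gamma} \cup b) \hookrightarrow N$, so that boundary-parallel loops in each piece still count as essential and hence cannot bound disks. Once that interpretation is granted, the rest is routine bookkeeping with the sutured manifold definitions.
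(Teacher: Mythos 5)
Your proposal is correct and follows essentially the same route as the paper: identify $R_\pm$, observe that torus boundary components are incompressible and norm-zero, note $x(R_\pm) = 1$, and rule out a representative with $x = 0$ by showing that disc components would contradict the incompressibility of $F - (\wihat{\gamma} \cup b)$. The paper's version is terser — it discards spheres and inessential discs, notes that a norm-zero representative would consist of discs and annuli, and invokes the parity of $|\boundary R_\pm| \in \{1,3\}$ to force a disc component — while you make explicit the push-off step showing a suture cannot bound a disc in $N$, and leave the final parity count as ``a brief Euler-characteristic check''; either level of detail is acceptable and the underlying argument is the same.
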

\begin{proof}
Since $N$ is irreducible, any torus component of $\boundary N$ is incompressible. Since $R_\pm \cap F$ is either a thrice-punctured sphere or a once-punctured torus, $x(R_\pm) = 1$. If $S$ is a norm-minimizing, properly embedded surface in $N$ with $\boundary S = \boundary R_\pm$ and homologous to $R_\pm$, then $x(S) \leq 1$. Without loss of generality, we may assume that $S$ has no 2--sphere or inessential disc components. If $x(S) = 0$, then each component of $S$ is either a disc or an annulus. Since $|\boundary R_\pm|$ is odd, at least one component of $S$ is a disc, contradicting the assumption that $F - (\wihat{\gamma} \cup b)$ is incompressible. Thus, $R_\pm$ and $N$ are taut.
\end{proof}

If $b$ is non-separating, we will often want to be more precise about which curves $\wihat{\gamma}$ we choose. Usually our choice will be determined by particular curves in $F$ as follows.

Let $T = \boundary_0 N[b] - \boundary \beta$; note that $T$ is a twice-punctured torus. Let $\zeta \subset F$ be the union of one or more simple closed curves on $F$ such that no component of $\zeta \cap T$ is an inessential arc or circle.  If an arc component of $\zeta \cap T$ has both endpoints at the same endpoint of $\beta$, then there are such arcs based at both endpoints of $\beta$ and the essential circles formed by any two such arc components are isotopic on $\boundary_0 N[b]$. If such arcs exist, choose $\wihat{\gamma}$ so that each component is isotopic to these essential circles. If no such arc component of $\zeta \cap T$ exists, choose the components of $\wihat{\gamma}$ so that each arc component of $\zeta \cap T$ intersects $\wihat{\gamma}$ exactly once. (To see that this is possible, consider a homeomorphism of $T$ that takes the arcs of $\zeta \cap T$ to ``standard arcs'' joining the punctures, choose curves intersecting the standard arcs exactly once each, and let $\wihat{\gamma}$ be the inverse image of those curves.)  Subject to these constraints, isotope $\wihat{\gamma}$ so that it intersects $\zeta$ minimally. If $\wihat{\gamma} = \nil$ or if $\wihat{\gamma}$ is chosen as described, we say that $\wihat{\gamma}$ \defn{respects} $\zeta$. If $a \subset F$ is an essential simple closed curve intersecting $b$ minimally and not disjoint from $b$, and if $\ob{Q} \subset N[a]$ is a suitably embedded surface with $Q = \ob{Q} \cap N$, then $\wihat{\gamma}$ \defn{respects} $\ob{Q}$ if it respects $\boundary Q \cup a$. See Figure \ref{Fig: RespectingSutures}.

\begin{figure}
\labellist \small\hair 2pt 
\pinlabel {$b$} at 329 35
\pinlabel {$b$} at 390 141
\pinlabel {$\zeta$} [b] at 178 269
\pinlabel {$\wihat{\gamma}$} [b] at 59 277
\pinlabel {$\wihat{\gamma}$} [t] at 259 150
\endlabellist 
\centering 
\includegraphics[scale=.6]{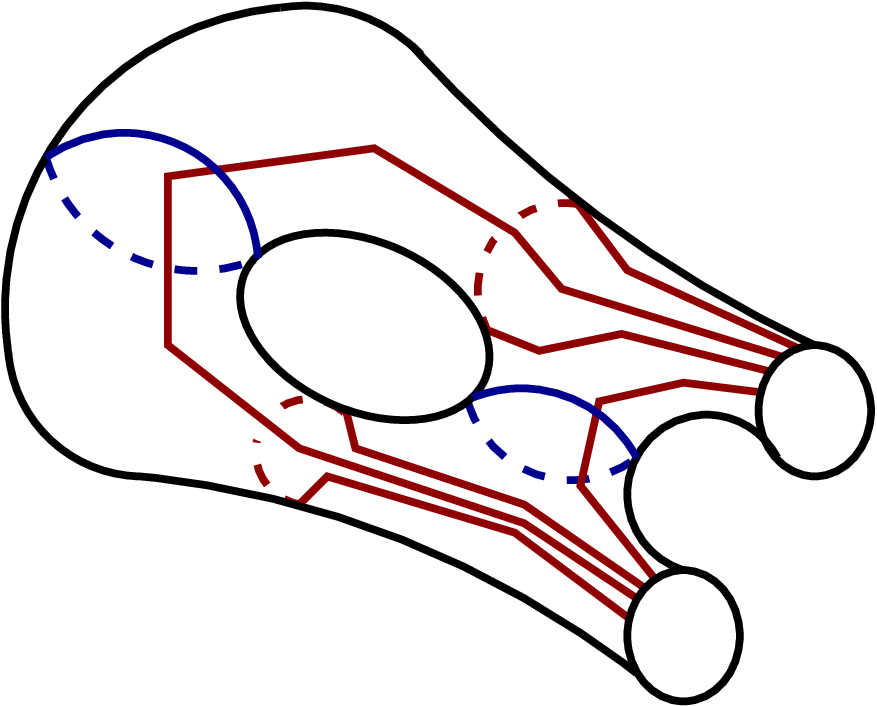}
\caption{An example of sutures $\wihat{\gamma} \subset F$ respecting a 1--manifold $\zeta$ on the twice punctured torus $F - \inter{\eta}(b)$.}
\label{Fig: RespectingSutures}
\end{figure}

Let $\mc{M}_b(\zeta)$ denote the number of arcs of $\zeta \cap T$ that have both endpoints on the same component of $\boundary \beta$. 

\begin{lemma}\label{Lem: Differing Intersections}
Suppose that $\zeta$ is an embedded 1--manifold on $F$ intersecting $b$ minimally and let $\zeta_0$ be a component of $\zeta$. Then for each component $\zeta_0$ of $\zeta$ we have the following:
\begin{enumerate}
\item If $b$ and $\zeta_0$ are given orientations then $\mc{M}_b(\zeta_0)$ is the number of changes of sign of $b \cap \zeta_0$ as $\zeta_0$ is traversed.
\item $\mc{M}_b(\zeta)$ is even
\item If $\mc{M}_b(\zeta_0) = 0$ and $|b \cap \zeta_0| > 0$, then both $\zeta_0$ and $b$ are non-separating and all points of intersection have the same sign.
\item If  $\wihat{\gamma}$ respects $\zeta$ then 
\[
\mc{M}_b(\zeta_0) = |\zeta_0 \cap b| - |\zeta_0 \cap \wihat{\gamma}|.
\]
\end{enumerate}
\end{lemma}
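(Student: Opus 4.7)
My plan is to unify parts (1)--(3) through a single sign-tracking argument and then dispatch (4) by splitting on the two cases in the definition of ``respects.''

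For (1), I would fix orientations on $b$ and $\zeta_0$ and look at the surface $T$ obtained by cutting $F$ open along $b$; its boundary contains two circles encoding the two local sides of $b$. At each intersection $p$ of $\zeta_0$ with $b$ the orientations assign a sign $\epsilon_p \in \{\pm 1\}$, and the side of $b$ on which $\zeta_0$ lies immediately before (resp.\ after) $p$ is determined by $\epsilon_p$. For consecutive intersections $p, q$ along $\zeta_0$, the arc of $\zeta_0 \cap T$ joining them therefore has both endpoints on the same component of $\boundary T$---that is, is an $\mc{M}_b$-arc---precisely when $\epsilon_p \neq \epsilon_q$. This identifies $\mc{M}_b(\zeta_0)$ with the number of sign changes as $\zeta_0$ is traversed, proving (1). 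Statement (2) is then immediate since the number of sign changes around a closed cycle is even. For (3), if $\mc{M}_b(\zeta_0) = 0$ all signs agree, so $|b \cdot \zeta_0| = |b \cap \zeta_0| > 0$; since any separating simple closed curve on $F$ is null-homologous in $H_1(F)$, both $b$ and $\zeta_0$ must then be non-separating.

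For (4), write $n = |\zeta_0 \cap b|$ and $m = \mc{M}_b(\zeta_0)$, so $\zeta_0 \cap T$ has $n$ arcs, of which $m$ are $\mc{M}_b$-arcs and $n-m$ are not. I aim to show $|\zeta_0 \cap \wihat{\gamma}| = n-m$, and split on whether $\mc{M}_b(\zeta) = 0$. If so, the first clause of ``respects'' makes every arc of $\zeta \cap T$ meet $\wihat{\gamma}$ in exactly one point, and this count survives the minimization of $|\wihat{\gamma} \cap \zeta|$ because each such arc is non-$\mc{M}_b$ and hence must cross the separating pair $\wihat{\gamma}$ at least once. If $\mc{M}_b(\zeta) > 0$, the second clause makes each component of $\wihat{\gamma}$ isotopic on $\boundary_0 N[b]$ to the essential circle built from any pair of $\mc{M}_b$-arcs; since isotopic simple closed curves on a torus can be made disjoint, $\wihat{\gamma}$ can be isotoped off every $\mc{M}_b$-arc simultaneously, whereas every non-$\mc{M}_b$-arc still must cross $\wihat{\gamma}$. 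The case $b$ separating (so $\wihat{\gamma} = \nil$) is immediate: $T$ is disconnected, every arc of $\zeta_0 \cap T$ is automatically an $\mc{M}_b$-arc, and both sides of the formula equal $|\zeta_0 \cap b|$ and $0$ respectively.

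The main obstacle I anticipate is promoting these inequalities---``at least one'' for non-$\mc{M}_b$-arcs and ``at most zero'' for $\mc{M}_b$-arcs---to equalities. I expect a standard bigon/innermost-disc argument to do the job: any arc of $\zeta_0 \cap T$ meeting $\wihat{\gamma}$ in more points than parity demands would produce a bigon bounded by subarcs of $\wihat{\gamma}$ and $\zeta$, which could be used to isotope $\wihat{\gamma}$ and strictly reduce $|\wihat{\gamma} \cap \zeta|$, contradicting the minimality clause in the definition of ``respects.'' Once arc-by-arc equality is in hand, summing the contributions over the arcs of $\zeta_0 \cap T$ yields $|\zeta_0 \cap \wihat{\gamma}| = n - m = |\zeta_0 \cap b| - \mc{M}_b(\zeta_0)$, completing (4).
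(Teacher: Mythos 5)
Your proposal is correct and follows essentially the same line of argument as the paper: cut along $b$, track the side of each crossing via orientation signs to establish (1)--(3), and for (4) separate into the cases $\mc{M}_b(\zeta) = 0$ and $\mc{M}_b(\zeta) > 0$ using the two clauses of ``respects.''  The only cosmetic difference is in (4): the paper explicitly exhibits the pair of annuli cut out by two $\mc{M}_b$-loops $\zeta', \zeta''$ and reads off the intersection count, whereas you obtain the same count via a bigon/minimality reduction, but these amount to the same thing.
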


\begin{proof}
Let $b_1$ and $b_2$ be the two endpoints of $\beta$ on $\boundary_0 N[b]$. Let $T = \boundary_0 N[b] - \boundary \beta$. If an arc of $\zeta_0 \cap T$ has both endpoints on either $b_1$ or $b_2$, then the corresponding points of intersection of $\zeta_0 \cap b$ have opposite sign and are adjacent on $\zeta_0$. Conversely, any change of sign on $\zeta_0$ must correspond to an arc of $\zeta_0 \cap T$ with both endpoints at either $b_1$ or $b_2$, since each component of $T$ is a once or twice-punctured torus. Since $\zeta_0$ is a circle, there must be an even number of changes of sign. Hence, (1). This is true for each component of $\zeta$, so $\mc{M}_b(\zeta)$ is even. Hence, (2). 

If $\mc{M}_b(\zeta_0) = 0$ and $|b \cap \zeta_0| > 0$ then $b$ and $\zeta_0$ intersect and all points of intersection have the same sign by (1). Consequently, both must be non-separating. Hence, (3).

If $b$ is separating, (4) follows immediately, so assume $b$ is non-separating. There are the same number of endpoints of $\zeta_0 \cap T$ at $b_1$ as at $b_2$. Each arc either has both endpoints at one of $b_1$ or $b_2$ or has one endpoint at $b_1$ and the other $b_2$. Thus, there are the same number of arcs of $\zeta_0 \cap T$ with both endpoints at $b_1$ as there are arcs of $\zeta_0 \cap T$ with both endpoints at $b_2$. This is true for each component of $\zeta$. If $\mc{M}_b(\zeta) > 0$, there exists at least one arc $\zeta'$ of $\zeta \cap T$ with both endpoints at $b_1$ and at least one arc $\zeta''$ of $\zeta \cap T$ with both endpoints at $b_2$ and all other arcs of $\zeta \cap T$ are disjoint from these arcs. Since, $\zeta$ intersects $b$ minimally, in $\boundary_0 N[b]$, $\zeta'$ and $\zeta''$ are disjoint essential loops and are, therefore, parallel. Assume that $\wihat{\gamma}$ respects $\zeta$. The curves $\wihat{\gamma}$ are parallel to these loops and intersect the other arcs of $\zeta \cap T$ minimally. Since $\boundary_0 N[b] - (\zeta' \cup \zeta'')$ is a pair of annuli and since there is one component of $\wihat{\gamma}$ in each annulus, we have
\[
\mc{M}_b(\zeta_0) = |\zeta_0 \cap b| - |\zeta_0 \cap \wihat{\gamma}|,
\]
if $\mc{M}_b(\zeta) > 0$.

If $\mc{M}_b(\zeta) = 0$, then by the construction of $\zeta$,
\[
\mc{M}_b(\zeta_0) = |\zeta_0 \cap b| - |\zeta_0 \cap \wihat{\gamma}| = 0.
\]
\end{proof}

The next lemma shows how to usefully define sutures $\tild{\gamma}$ on non-torus components of $\boundary N - F$. Let $\gamma = \wihat{\gamma} \cup \tild{\gamma} \cup b$.

\begin{lemma}[{\cite[Lemma 4.1]{T1}}]\label{Lem: Choosing Sutures}
Assume\footnote{This hypothesis was left out of the statement of the lemma in \cite{T1}. It is used in the proof when \cite[Lemma 5.6]{S4} is applied.} that $\boundary N - (\wihat{\gamma} \cup b)$ is incompressible in $N$. Suppose also that $N$ is irreducible and that if $\boundary N - F$ contains a non-torus component then there is no essential annulus in $N$ with boundary on $\wihat{\gamma} \cup b$. Then $\tild{\gamma}$ can be chosen so that $(N,\gamma)$ is taut.  Furthermore, if $c \subset \boundary N - F$ is a collection of disjoint, non-parallel curves such that:
\begin{itemize}
\item $|c| \leq 2$
\item All components of $c$ are on the same component of $\boundary N - F$
\item No component of $c$ cobounds an essential annulus in $N$ with a curve of $\wihat{\gamma} \cup b$
\item If $|c| = 2$ then there is no essential annulus in $N$ with boundary $c$
\item If $|c| = 2$ and $b$ is separating, there is no essential thrice-punctured sphere in $N$ with boundary $c \cup b$.
\end{itemize}
then $\tild{\gamma}$ can be chosen to be disjoint from $c$.
\end{lemma}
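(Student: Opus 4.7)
The plan is to construct the sutures $\tild{\gamma}$ on each non-torus component of $\boundary N - F$ separately using Scharlemann's Lemma 5.6 from \cite{S4}, and then to combine these with $\wihat{\gamma} \cup b$ via Lemma \ref{Lem: Torus Sutures} to conclude tautness of $(N, \gamma)$.

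First I would dispose of the case where $\boundary N - F$ has no non-torus components: take $\tild{\gamma} = \nil$, and tautness is immediate from Lemma \ref{Lem: Torus Sutures}. Otherwise, for each non-torus component $F' \subset \boundary N - F$, I would apply Scharlemann's Lemma 5.6 to produce a collection $\tild{\gamma}_{F'}$ of disjoint essential simple closed curves in $F'$ whose class in $H_1(F'; \Z/2)$ vanishes (so that $F' - \tild{\gamma}_{F'}$ admits the two-coloring required for a suture system) and such that $R_\pm(\tild{\gamma}_{F'})$ is incompressible and Thurston-norm minimizing in $N$. The hypotheses needed for Scharlemann's lemma are that $N$ is irreducible and $F'$ is incompressible (both among the standing hypotheses), together with the absence of certain essential annuli in $N$; the assumption that no essential annulus has boundary on $\wihat{\gamma} \cup b$ handles annuli with one boundary on $F'$ and the other on $F$, while the incompressibility of $F'$ handles annuli lying entirely on $F'$. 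Setting $\tild{\gamma} = \bigcup_{F'} \tild{\gamma}_{F'}$, tautness of $(N, \gamma)$ then follows by partitioning $R_\pm(\gamma)$ into the piece on $F$ (taut by Lemma \ref{Lem: Torus Sutures}) and the pieces on each non-torus component of $\boundary N - F$ (taut by Scharlemann's output).

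For the disjointness statement, I would modify the construction on the (unique) component $F' \subset \boundary N - F$ containing $c$. The strategy is to extend $c$ to a pants decomposition of $F'$ and take $\tild{\gamma}_{F'}$ to be a sub-system of that decomposition avoiding $c$; the remaining curves are chosen to still produce a valid suture system. Removing $c$ causes the pants adjacent to each component of $c$ to merge into a larger region, so the tautness of $R_\pm(\gamma)$ must be re-verified. Each listed condition on $c$ eliminates one specific obstruction to this re-verification. The no-essential-annulus-with-$\wihat{\gamma} \cup b$ condition blocks a norm-reducing annulus spanning from a removed suture to $F$; the no-essential-annulus-with-boundary-$c$ condition (when $|c|=2$) blocks a norm-reducing annulus internal to the merged region; and the no-essential-thrice-punctured-sphere condition (when $|c|=2$ and $b$ is separating) rules out the lowest-Euler-characteristic planar surface that could reduce the Thurston norm across the merged region spanning $c$ and meeting $b$.

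The main obstacle will be this last case analysis: showing that the listed exclusions are collectively exhaustive, so that any norm-reducing or compressing surface obstructing the tautness of $R_\pm(\gamma)$ after the modification must be one of the enumerated essential annuli or the thrice-punctured sphere. I expect this to require a careful classification of potential obstructing surfaces by their intersection pattern with the merged region, of the kind familiar from the proof of Scharlemann's Lemma 5.6 itself.
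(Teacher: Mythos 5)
The paper does not contain a proof of this lemma; it cites \cite[Lemma 4.1]{T1}, and the footnote indicates that the proof there applies \cite[Lemma 5.6]{S4}. Your proposal correctly identifies Scharlemann's lemma as the key ingredient, which is consistent with that hint.

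However, your treatment of the disjointness claim has a real gap, one you yourself flag as ``the main obstacle.'' Your pants-decomposition strategy for choosing $\tild{\gamma}$ disjoint from $c$ is under-specified exactly where it matters: you assert without argument that ``the remaining curves are chosen to still produce a valid suture system,'' and you give no mechanism for the ``re-verification'' of tautness beyond observing that the five bulleted conditions should block the obstructions --- which is the entire content to be proved. A pants decomposition of $F'$ is a purely two-dimensional object that carries no information about surfaces inside $N$, whereas norm-minimality of $R_\pm$ is a statement about the interior of $N$. The shape of the bulleted conditions (no essential annulus cobounding with $\wihat{\gamma}\cup b$, no essential annulus with boundary $c$, no essential thrice-punctured sphere with boundary $c\cup b$) is precisely the list of hypotheses needed to invoke Scharlemann's Lemma 5.6 with the conclusion that its output sutures can be chosen disjoint from a prescribed pair of curves. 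The correct move is to verify those hypotheses from the bulleted conditions and then cite the lemma, not to attempt an independent construction that would amount to reproving it. Finally, your phrase ``removing $c$ causes the pants adjacent to each component of $c$ to merge'' conflates two suture systems; $c$ is a constraint on $\tild{\gamma}$, not a candidate suture, so there is nothing to remove.
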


If $\wihat{\gamma}$ respects a suitably embedded surface $\ob{Q} \subset N[a]$, then we also say that $\gamma$ \defn{respects} $\ob{Q}$.

The following lemma records a handy bit of accounting that we will use repeatedly.

\begin{lemma}\label{Lem: Accounting}
Suppose that $\gamma$ respects a suitably embedded surface $\ob{Q} \subset N[a]$ and that $\mc{M}_b(a) > 0$. Then the inequality
\[
-2\chi(Q) + |\boundary Q \cap \gamma| \geq 2|\boundary Q \cap b|
\]
implies the inequality:
\[
-2\chi(\ob{Q}) + |\boundary \ob{Q} \cap \tild{\gamma}| \geq q(\mc{M}_b(a) - 2) + q^*(\mc{M}_b(a^*) - 2) + \mc{M}_b(\boundary \ob{Q}).
\]
\end{lemma}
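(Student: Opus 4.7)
The plan is a direct computation: substitute into the hypothesized inequality the relations between $Q$ and $\ob{Q}$ that come from the suitably embedded structure, then apply Lemma \ref{Lem: Differing Intersections}(4) to convert the differences $|\cdot \cap b| - |\cdot \cap \wihat{\gamma}|$ into values of $\mc{M}_b$. The two algebraic ingredients are a change of Euler characteristic when passing from $\ob{Q}$ to $Q$, and a splitting of boundary intersection counts between $\boundary \ob{Q}$ and the capping circles parallel to $a$ or $a^*$.

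First, I would compute $\chi(Q) = \chi(\ob{Q}) - q - q^*$. After isotoping $\ob{Q}$ off the 3-ball used (in the non-separating case) to cap the spherical boundary component produced by attaching 2-handles to $a$ and $a^*$, the surface $\ob{Q}$ is exactly $Q$ together with $q$ meridional discs of the 2-handle on $a$ and $q^*$ meridional discs of the 2-handle on $a^*$, each contributing $1$ to $\chi$. Thus $-2\chi(Q) = -2\chi(\ob{Q}) + 2q + 2q^*$.

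Next, I would split the boundary intersection counts. Since $\tild{\gamma} \subset \boundary N - F$ and $\boundary Q - \boundary \ob{Q} \subset F$, one has $|\boundary Q \cap \tild{\gamma}| = |\boundary \ob{Q} \cap \tild{\gamma}|$. Each of the $q$ curves of $\boundary Q - \boundary \ob{Q}$ parallel to $a$ contributes $|a \cap b|$ to $|\boundary Q \cap b|$ and $|a \cap \wihat{\gamma}|$ to $|\boundary Q \cap \wihat{\gamma}|$, and similarly for the $q^*$ copies of $a^*$. Rewriting the hypothesis as
\[
-2\chi(Q) + |\boundary Q \cap \wihat{\gamma}| + |\boundary Q \cap \tild{\gamma}| - |\boundary Q \cap b| \geq 0
\]
and substituting gives
\[
-2\chi(\ob{Q}) + |\boundary \ob{Q} \cap \tild{\gamma}| - \bigl(|\boundary \ob{Q} \cap b| - |\boundary \ob{Q} \cap \wihat{\gamma}|\bigr) + q\bigl(2 - |a \cap b| + |a \cap \wihat{\gamma}|\bigr) + q^*\bigl(2 - |a^* \cap b| + |a^* \cap \wihat{\gamma}|\bigr) \geq 0.
\]

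The final step is to apply Lemma \ref{Lem: Differing Intersections}(4). Since $\gamma$ respects $\ob{Q}$, the curves $\wihat{\gamma}$ respect $\boundary Q \cup a$, and the assumption $\mc{M}_b(a) > 0$ puts us in the first case of the definition of \emph{respects} so that the formula applies unambiguously to each component. Summing the formula over components of $\boundary \ob{Q}$ and applying it to $a$ and to a component parallel to $a^*$ yields
\[
|\boundary \ob{Q} \cap b| - |\boundary \ob{Q} \cap \wihat{\gamma}| = \mc{M}_b(\boundary \ob{Q}), \quad |a \cap b| - |a \cap \wihat{\gamma}| = \mc{M}_b(a), \quad |a^* \cap b| - |a^* \cap \wihat{\gamma}| = \mc{M}_b(a^*).
\]
Plugging these three identities into the previous display and collecting the $q$ and $q^*$ terms produces the claimed inequality. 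The only mildly delicate step is the Euler characteristic accounting in the non-separating case, where one must first push $\ob{Q}$ off the filling 3-ball; everything else is bookkeeping.
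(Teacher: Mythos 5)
Your proof is correct and follows essentially the same approach the paper sketches: it combines Lemma \ref{Lem: Differing Intersections}(4) with the Euler characteristic identity $\chi(Q) = \chi(\ob{Q}) - \wihat{q}(\ob{Q})$ and the decomposition $|\zeta \cap \gamma| = |\zeta \cap \wihat{\gamma}| + |\zeta \cap \tild{\gamma}| + |\zeta \cap b|$. You have simply carried out explicitly the bookkeeping the paper leaves implicit.
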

\begin{proof}
This is easily proven by piecing together Lemma \ref{Lem: Differing Intersections} and the following two facts:
\begin{itemize}
\item $\chi(Q) = \chi(\ob{Q}) - \wihat{q}(\ob{Q})$
\item For any simple closed curve $\zeta$ on $\boundary N$, 
\[|\zeta \cap \gamma| = |\zeta \cap \wihat{\gamma}| + |\zeta \cap \tild{\gamma}| + |\zeta \cap b|.\]
\end{itemize}
\end{proof}

\section{Degenerating Handle Additions}\label{Degenerating}

In this section, we assume that $N$ is simple and that attaching a 2--handle to $N$ along $b$ creates some sort of degeneration (usually either an essential disc or sphere). We use this to, in many instances, deduce an inequality relating the intersection number of $a$ and $b$ to the euler characteristic of an essential surface in $N[a]$.

\begin{proposition}\label{Thm: Comparing 1}
Suppose that $N$ is a compact orientable simple 3-manifold with $F \subset \boundary N$ a genus 2 component. Let $a,b$ be two essential simple closed curves on $\boundary N$ that cannot be isotoped to be disjoint.

Let $\ob{Q} \subset N[a]$ be a suitably embedded essential surface and assume that $Q = \ob{Q} \cap N$ is incompressible and $b$-boundary incompressible.  Assume also that $\boundary \ob{Q} \cap \boundary_1 N$, if non-empty, consists of curves all parallel to a simple closed curve $c_1 \subset \boundary_1 N$. 

Assume that one of the following holds:
\begin{enumerate}
\item[(a)] $N[b]$ is reducible
\item[(b)] $N[b]$ is a solid torus
\item[(c)] A simple closed curve $c_b$ disjoint from $c_1$ and on the same component of $\boundary_1 N$ compresses in $N[b]$. 
\item[(d)] There is a non-zero class $y \in H_2(N[b],\boundary N[b])$ such that no norm-minimizing conditioned surface representing $\pm y$ is disjoint from $\beta$. If $b$ is separating, we also require that the projection of $\boundary y$ to the first homology of each component of $\boundary_0 N[b]$ is nontrivial.
\end{enumerate}

Then, one of the following occurs:
\begin{enumerate}
\item $a$ and $b$ are both non-separating and all intersection points of $a \cap b$ have the same sign.
\item $\ob{Q} \subset N$ and $\ob{Q}$ is disjoint from $b$.
\item 
\[
-2\chi(\ob{Q}) \geq q(\mc{M}_b(a) - 2) + q^*(\mc{M}_b(a^*) - 2) + \mc{M}_b(\boundary \ob{Q}). 
\]
\end{enumerate}
\end{proposition}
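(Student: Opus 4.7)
The plan is to equip $N$ with a taut sutured manifold structure having $b$ as a suture, apply Theorem \ref{Thm: Main Theorem} to $Q = \ob{Q} \cap N$, and translate conclusion (4) of that theorem into the inequality in conclusion (3) of the proposition via Lemma \ref{Lem: Accounting}. First I would choose $\wihat{\gamma}$ on $F$ respecting $\ob{Q}$. Using simplicity of $N$ together with Lemma \ref{Lem: Torus Sutures}, the surface $F - (\wihat{\gamma} \cup b)$ is incompressible in $N$, and simplicity also supplies the no-annulus hypotheses of Lemma \ref{Lem: Choosing Sutures}. I then apply the ``furthermore'' clause of that lemma with $c = c_1$ (and, in case (c), also $c_b$) to extend to sutures $\gamma = \wihat{\gamma} \cup \tild{\gamma} \cup b$ with $(N,\gamma)$ taut and $|\boundary \ob{Q} \cap \tild{\gamma}| = 0$.

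Assuming $|Q \cap b| \geq 1$ (if instead $|Q \cap b| = 0$, then $q(\ob{Q}) = 0$ because curves parallel to $a$ meet $b$, and a short analysis of components parallel to $a^*$ shows we land in conclusion (2) of the proposition), Theorem \ref{Thm: Main Theorem} applies and yields one of four alternatives. Alternative (1) is ruled out by the assumed incompressibility and $b$-boundary incompressibility of $Q$. Alternative (4), fed into Lemma \ref{Lem: Accounting}, produces the inequality of conclusion (3) of the proposition whenever $\mc{M}_b(a) > 0$; if $\mc{M}_b(a) = 0$ while $|a \cap b| \geq 1$, then Lemma \ref{Lem: Differing Intersections}(3) forces both $a$ and $b$ to be non-separating with all intersection points of the same sign, which is conclusion (1) of the proposition.

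The heart of the proof is ruling out alternatives (2) and (3) of Theorem \ref{Thm: Main Theorem} under each of the hypotheses (a)--(d). Alternative (3) of the Main Theorem requires $(N[b], \gamma - b)$ to be taut: this is incompatible with reducibility in case (a); in case (b) the solid torus $N[b]$ admits a compressing disc for $R_\pm$ (with our choice of $\wihat{\gamma}$), obstructing tautness; in case (c) the compressing disc in $N[b]$ for $c_b$ lies disjoint from $\tild{\gamma}$ by construction and again obstructs tautness; and in case (d) the ``moreover'' clause of (3) would directly produce a conditioned norm-minimizing surface representing $\pm y$ that is disjoint from $\beta$, contradicting the hypothesis. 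The main obstacle will be alternative (2), the connect-sum decomposition $(N[b], \beta) = (M'_0, \beta'_0) \# (M'_1, \beta'_1)$ with $M'_1$ a lens space and $\beta'_1$ a core of its genus-one Heegaard splitting, since this is formally compatible with reducibility of $N[b]$. I expect to rule it out by showing that the summing sphere, after isotopy relative to $\beta$, can be pushed into $N$ itself and so yields a reducing sphere, contradicting simplicity of $N$; in case (d) one instead pulls a norm-minimizing surface from the $M'_0$ factor back to $N[b]$ to obtain a representative of $\pm y$ disjoint from $\beta$ (which lies entirely in the lens-space summand), contradicting (d).
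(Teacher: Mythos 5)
Your overall plan matches the paper's proof: put a taut sutured manifold structure on $N$ with suture $b$, apply Theorem \ref{Thm: Main Theorem}, rule out alternatives (1)--(3), and translate alternative (4) into the inequality via Lemma \ref{Lem: Accounting}, with conclusions (1) and (2) absorbing the degenerate cases $\mc{M}_b(a)=0$ and $|Q\cap b|=0$. However, several of the details as you sketch them don't hold up and need to be replaced.

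First, your argument to rule out alternative (2) of Theorem \ref{Thm: Main Theorem} is not quite right. The summing sphere $P$ for a pair connected sum $(N[b],\beta)=(M'_0,\beta'_0)\#(M'_1,\beta'_1)$ with $\beta'_1$ a core of the lens space genus-one Heegaard splitting necessarily meets $\beta$ in exactly two points, so it cannot be ``pushed into $N$'' by an isotopy \emph{relative to $\beta$}, nor does $\beta$ ``lie entirely in the lens-space summand'' (it is a properly embedded arc with endpoints on $\boundary N[b]$). The correct argument is: $P\cap N$ is an annulus both of whose boundary curves are parallel to $b$; since $N$ is simple that annulus is compressible or $\boundary$-parallel, and either possibility leads to a contradiction (a compression yields an essential disc in $N$; $\boundary$-parallelism lets you isotope $P$ off $\beta$ into $N$, contradicting irreducibility of $N$). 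The paper states this tersely as ``$\beta$ cannot intersect a reducing sphere in $N[b]$ exactly twice.''

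Second, in case (b) you claim $(N[b],\gamma-b)$ fails to be taut because $\wihat{\gamma}$ gives a compressing disc for $R_\pm$, but there is no reason the sutures $\wihat{\gamma}$, which you chose to respect $\ob{Q}$, should happen to be meridians of the solid torus $N[b]$; if they are not, the annuli $R_\pm$ are incompressible and $(N[b],\gamma-b)$ is perfectly taut. The paper sidesteps this by observing that, since $N$ is simple, (b) implies (d): if $N[b]$ were a solid torus with a meridian disc disjoint from $\beta$ then that disc would be an essential disc in $N$.

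Third, your application of Lemma \ref{Lem: Choosing Sutures} with $c = c_1 \cup c_b$ in case (c) skips the verification of the thrice-punctured sphere hypothesis when $|c|=2$ and $b$ is separating, and this hypothesis is not a consequence of $N$ being simple. The paper's construction chooses $c_2=c_1$ rather than $c_2=c_b$ whenever $c_1$ itself compresses in $N[b]$, precisely so that a thrice-punctured sphere with boundary $c\cup b$ would (after capping with the disc for $c_b$ and the 2-handle disc) force $c_1$ to compress and thereby contradict $|c|=2$. Without that case distinction the hypothesis may fail. With these three points corrected your argument becomes the paper's.
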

\begin{proof}
Since $N$ is simple, notice that hypothesis (b) implies hypothesis (d). If hypothesis (a), (b), or (d) is satisfied, let $c_2 = \nil$. If none of those are satisfied, but the curve $c_1$ compresses in $N[b]$, let $c_2 = c_1$. If none of (a), (b), or (d) is satisfied and if $c_1$ does not compress in $N[b]$, let $c_2 = c_b$. Define $c = c_1 \cup c_2$ and notice that $c$ has two components if and only if $c_1$ does not compress in $N[b]$ and $c_1$ and $c_2$ are non-empty, disjoint, non-parallel loops on the same component of $\boundary_1 N$. 

We claim that $c$ satisfies the hypotheses of Lemma \ref{Lem: Choosing Sutures}. Since $N$ contains no essential disc or annulus, we need only check that if $|c| = 2$ and $b$ is separating then there is no essential thrice-punctured sphere in $N$ with boundary $c \cup b$. Since $c_b$ compresses in $N[b]$, the existence of a thrice-punctured sphere in $N$ with boundary on $c \cup b$ would imply that $c_1$ compresses in $N[b]$. But, by our choice of $c_2$, this contradicts the hypothesis that $|c| = 2$. Thus, the hypotheses of Lemma \ref{Lem: Choosing Sutures} are satisfied. Let $\gamma = \tild{\gamma} \cup \wihat{\gamma} \cup b$ be sutures respecting $\ob{Q}$ and disjoint from $c$. The sutured manifold $(N,\gamma)$ is taut. 

If $|\boundary Q \cap b| = 0$, then $\wihat{q}(\ob{Q}) = 0$ and $|\boundary \ob{Q} \cap b| = 0$. This implies conclusion (2). Assume that neither conclusion (1) nor conclusion (2) holds. Since $\gamma$ respects $Q$, and since $N$ is simple, no component of $Q$ is a sphere or disc disjoint from $\gamma$. Thus, the hypotheses of Theorem \ref{Thm: Main Theorem} are satisfied. By hypothesis, $Q$ has no compressing or $b$-boundary compressing disc. Since $N$ is simple, $\beta$ cannot intersect a reducing sphere in $N[b]$ exactly twice. By hypothesis and the construction of $\tild{\gamma}$ to be disjoint from $c_b$, either $(N[b],\gamma - b)$ is not $\nil$-taut or no first surface in a taut sutured manifold hierarchy for $(N[b],\gamma -b )$ representing $y$ is disjoint from $\beta$. Thus, by Theorem \ref{Thm: Main Theorem},
\[
-2\chi(Q) + |Q \cap \gamma| \geq 2|Q \cap b|.
\]

By Lemma \ref{Lem: Accounting}, this implies that 
\[
-2\chi(\ob{Q}) \geq q(\mc{M}_b(a) - 2) + q^*(\mc{M}_b(a^*) - 2) + \mc{M}_b(\boundary \ob{Q}). 
\]
\end{proof}

\begin{corollary}\label{Cor: Comparing 1}
Suppose that $N$ is a compact, orientable simple 3-manifold with a genus two boundary component $F$. Let $a,b \subset F$ be two essential curves that cannot be isotoped to be disjoint. Assume that $N[a]$ contains an essential sphere or disc $\ob{R}$.

Assume also that one of the following holds: 
\begin{itemize}
\item $N[b]$ is reducible
\item $N[b]$ is a solid torus
\item A simple closed curve $c_b$ disjoint from, and on the same component of $\boundary_1 N[b]$ as $\boundary \ob{R}$, compresses in $N[b]$. 
\end{itemize}

Then $a$ and $b$ are both non-separating, and all points of intersection between them have the same sign.
\end{corollary}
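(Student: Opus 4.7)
Plan. I would apply Proposition \ref{Thm: Comparing 1} to the given essential sphere or disc $\ob{R}$ in $N[a]$. Because that proposition requires an incompressible and $b$-boundary incompressible surface in $N$, I first replace $\ob{R}$ by a related surface $\ob{Q}$, then rule out every alternative in the proposition's conclusion except (1), which is precisely the Corollary's conclusion.

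Concretely, I would put $\ob{R}$ into suitably embedded form and iterate Theorem \ref{Thm: Constructing Q} (as in part (1) of the unlabelled lemma immediately following it) to produce an essential, suitably embedded $\ob{Q} \leq \ob{R}$ in $N[a]$ that is minimal with respect to $b$-boundary compressions. Conclusions (3)--(4) of Theorem \ref{Thm: Constructing Q} give $\chi(\ob{Q}) \geq \chi(\ob{R}) \geq 1$, and each component of $\ob{Q}$ is a sphere, a disc, or a planar surface whose boundary is parallel to $\boundary \ob{R}$. I would then argue $\wihat{q}(\ob{Q}) > 0$: otherwise $\ob{Q}$ would sit in $N$ as an essential sphere or disc, contradicting simplicity of $N$ (a sphere in $N$ bounds a ball by irreducibility, and that ball persists in $N[a]$ since $H_a$ is disjoint from it, contradicting essentiality in $N[a]$; the disc case is handled by a parallel argument distinguishing whether $\partial\ob{Q}$ sits on $\partial N - F$ or on $F$). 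Minimality together with $\wihat{q}(\ob{Q}) > 0$ then forces $Q = \ob{Q} \cap N$ to be both incompressible and $b$-boundary incompressible (by part (2) of the same lemma), and since $\boundary \ob{Q}$ has at most one component the boundary hypothesis of Proposition \ref{Thm: Comparing 1} is satisfied.

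I would then apply Proposition \ref{Thm: Comparing 1}, taking $c_1 = \boundary \ob{R}$ when $\boundary \ob{R} \subset \boundary_1 N$. Conclusion (1) is the Corollary's conclusion; conclusion (2) contradicts $\wihat{q}(\ob{Q}) > 0$. To rule out conclusion (3), suppose for contradiction that the Corollary's conclusion fails; then Lemma \ref{Lem: Differing Intersections}(3) gives $\mc{M}_b(a) \geq 2$, so the first term on the right of the Proposition's inequality
\[
-2\chi(\ob{Q}) \geq q(\mc{M}_b(a) - 2) + q^*(\mc{M}_b(a^*) - 2) + \mc{M}_b(\boundary \ob{Q})
\]
is non-negative. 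If $a$ is separating, then $a^* = \nil$ and $q^* = 0$, so the right side is non-negative; if $a$ is non-separating and $|a^* \cap b| > 0$, then $a^*$ is separating and Lemma \ref{Lem: Differing Intersections}(3) applied to $a^*$ gives $\mc{M}_b(a^*) \geq 2$, again making the right side non-negative. In both cases the left side is at most $-2$, a contradiction.

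The main obstacle is the remaining subcase, in which $a$ is non-separating and $b$ is disjoint from $a^*$ (so $b$ lies in the once-punctured torus bounded by $a^*$ and $q^* > 0$ is no longer excluded by the inequality). To handle this I would observe that when $q = 0$ the surface $\ob{Q}$ lies in $N[a^*]$ and remains essential there (any bounding ball in $N[a^*]$ persists in $N[a]$), so $a^*$ is itself a separating degenerating curve; one can then either reduce to the separating case already treated by replacing $a$ with $a^*$, or exploit the precise location of $b$ in the once-punctured torus bounded by $a^*$ to obtain the contradiction. This last subcase, corresponding to the Scharlemann--Wu ``non-basic'' configuration, is where I expect the proof to require the most care.
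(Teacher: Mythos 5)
Your overall strategy matches the paper's: pass from $\ob{R}$ to a minimal $\ob{Q} \leq \ob{R}$, show $\wihat{q}(\ob{Q}) > 0$, invoke Proposition \ref{Thm: Comparing 1}, and derive a contradiction from the Euler-characteristic inequality since $\ob{Q}$ is a sphere or disc. That much is fine and agrees with the paper.

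The gap is in your final paragraph. You single out the subcase ``$a$ non-separating, $b$ disjoint from $a^*$'' as the main obstacle and propose two vague escapes. Neither is needed, because this subcase simply cannot occur under the contradiction hypothesis. The paper's observation: if $a^*$ is non-empty and disjoint from $b$, then $b$ lies in the once-punctured torus in $F$ bounded by $a^*$ and containing $a$ (the other side is disjoint from $a$, which is excluded since $a$ and $b$ intersect). Two essential simple closed curves in a once-punctured torus that intersect necessarily intersect always with the same sign. But we are arguing under the assumption that $a$ and $b$ do \emph{not} intersect always with the same sign. So $a^*$ cannot be disjoint from $b$, and since $a^*$ is separating, Lemma \ref{Lem: Differing Intersections}(2)--(3) gives $\mc{M}_b(a^*) \geq 2$, which is all you need in the $q^* > 0$ branch. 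No ``care'' is required --- it is a one-line observation about curves on a punctured torus.

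Your first proposed escape, ``reduce to the separating case by replacing $a$ with $a^*$,'' would in fact fail: the Corollary's hypothesis requires the pair of curves to be non-disjoint, whereas $a^*$ and $b$ are disjoint in the very subcase you are trying to handle, so nothing can be concluded by applying the statement to $(a^*,b)$. Your second escape (``exploit the precise location of $b$'') is pointing in the right direction but you do not carry it out and you mischaracterize it as the most delicate part of the proof. In the paper this is where the ``assume not all signs agree'' hypothesis does its essential work: it immediately rules out the $b \subset$ (once-punctured torus bounded by $a^*$) configuration, so that $\mc{M}_b(a^*) - 2 \geq 0$ whenever $q^* > 0$.

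One further small point: you do not need to choose $c_1 = \boundary \ob{R}$ to land in hypothesis (c) of Proposition \ref{Thm: Comparing 1}; the paper's proof uses the hypotheses of the Corollary directly to feed into (a), (b), or (c) of that proposition, and only needs to know that $\boundary \ob{Q}$, being a single curve (or empty), satisfies the ``all parallel to $c_1$'' condition with $c_1 = \boundary \ob{Q}$. This is why the paper notes ``if $\boundary \ob{Q} \subset \boundary_1 N[b]$ then $\boundary \ob{Q} = \boundary \ob{R}$,'' so the $c_b$ hypothesis transfers unchanged.
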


\begin{proof}
Assume that not all points of intersection between $a$ and $b$ have the same sign. This implies that $\mc{M}_b(a) \geq 2$.

If $N[a]$ is reducible, let $\ob{R}$ be an essential sphere in $N[a]$ and if $N[a]$ is irreducible, let $\ob{R}$ be an essential disc in $N[a]$. Choose $\ob{Q} \leq \ob{R}$ to be minimal with respect to $b$-boundary compressions. Notice that if $\ob{Q}$ is a disc, then $\ob{R}$ is as well and if $\boundary \ob{Q} \subset \boundary_1 N[b]$ then $\boundary \ob{Q} = \boundary \ob{R}$.

Since $N$ is irreducible and boundary-irreducible, $\wihat{q}(\ob{Q}) \neq 0$. If $a^*$ is non-empty and is disjoint from $b$, then $a$ and $b$ lie in a common once-punctured torus. In which case, $a$ and $b$ are both non-separating and intersect always with the same sign. This contradicts our assumption. Hence, if $q^* \neq 0$, then $\mc{M}_b(a^*) \geq 2$ since $a^*$ is separating.
 
If $q > 0$, by Theorem \ref{Thm: Comparing 1}, we have:
\[
-2\chi(\ob{Q}) \geq q(\mc{M}_b(a) - 2) \geq 0.
\]
But $\ob{Q}$ is a sphere or a disc, so this is a contradiction.

If $q^* > 0$, then by Theorem \ref{Thm: Comparing 1}, we have
\[
-2\chi(\ob{Q}) \geq q^*(\mc{M}_b(a^*) - 2) \geq 0.
\]
This is also a contradiction. Thus, all points of intersection between $a$ and $b$ have the same sign.
\end{proof}

Here is a version of Proposition \ref{Thm: Comparing 1} allowing for $\boundary \ob{Q}$ to have non-isotopic components on $\boundary_1 N$.

\begin{proposition}\label{Thm: Comparing 2}
Suppose that $N$ is a compact, orientable, simple 3-manifold and that $F \subset \boundary N$ has genus 2. Let $a,b \subset F$ be two essential curves that cannot be isotoped to be disjoint. Suppose that $\ob{Q} \subset N[a]$ is a suitably embedded essential surface and that $Q = \ob{Q} \cap N$ is incompressible, $b$-boundary-incompressible, and has no boundary compressing disc adjacent to $\boundary_1 N$. If $N[b]$ is reducible, then one of the following holds:
\begin{enumerate}
\item $a$ and $b$ are non-separating and all points of $a \cap b$ have the same sign.
\item $\ob{Q} \subset N$ and $\boundary \ob{Q}$ is disjoint from $b$.
\item
\[
-4\chi(\ob{Q}) \geq q(\mc{M}_b(a) - 2) + q^*(\mc{M}_b(a^*) - 2) + \mc{M}_b(\boundary \ob{Q}). 
\]
\end{enumerate}
\end{proposition}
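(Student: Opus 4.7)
The plan is to reduce Proposition \ref{Thm: Comparing 2} to Proposition \ref{Thm: Comparing 1} by doubling $N$ along $\boundary_1 N$; the doubled manifold has no $\boundary_1$ component, so sutures $\tild{\gamma}$ can be chosen compatibly with $\boundary \ob{Q}$, and the doubling of the Euler characteristic explains the factor $4$ rather than $2$ in conclusion (3). Before doubling, dispose of the degenerate cases: if $N[a]$ is reducible or boundary-reducible, Corollary \ref{Cor: Comparing 1} immediately yields conclusion (1), so assume $N[a]$ is irreducible and boundary-irreducible; if $|\boundary Q \cap b| = 0$ then $\wihat{q}(\ob{Q}) = 0$ and conclusion (2) holds. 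Assuming conclusion (1) also fails, $\mc{M}_b(a) \geq 2$, and $\mc{M}_b(a^*) \geq 2$ whenever $q^* > 0$ (otherwise $a$ and $b$ would lie in a common once-punctured torus with all intersections of one sign, forcing (1)).

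Next, form the double $D_a N$ and the doubled surface $D_a \ob{Q}$. Lemma \ref{Lem: Doubled Param} delivers both that $D_a N$ is irreducible and that $D_a Q$ has no compressing or $b_1$-boundary compressing disc in $D_a N$; and any reducing sphere in $N[b]$ persists inside $N_1[b] \subset D_a N[b]$, so $D_a N[b]$ is also reducible. Using simplicity of $N$ together with the hypothesis that $Q$ has no boundary compressing disc adjacent to $\boundary_1 N$, rule out essential discs and annuli in $D_a N$ with boundary on $\wihat{\gamma} \cup b$, and apply Lemma \ref{Lem: Choosing Sutures} to $D_a N$ to obtain sutures $\gamma$ respecting $D_a \ob{Q}$ so that $(D_a N, \gamma)$ is taut.

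Now apply Theorem \ref{Thm: Main Theorem} to $(D_a N, \gamma)$, $D_a Q$, and $b$. Simplicity of $N$ prevents $\beta$ from meeting a reducing sphere of $N[b]$ in exactly two points, ruling out conclusion (2) of the theorem; reducibility of $D_a N[b]$ forbids tautness of $(D_a N[b], \gamma - b)$, ruling out conclusion (3); and Lemma \ref{Lem: Doubled Param} shows conclusion (1) of the theorem fails. Hence $-2\chi(D_a Q) + |\boundary D_a Q \cap \gamma| \geq 2|\boundary D_a Q \cap b|$. Since $D_a Q$ is obtained by gluing two copies of $Q$ along circles on $\boundary_1 N$, $\chi(D_a Q) = 2\chi(Q)$, while $|\boundary D_a Q \cap b| = |\boundary Q \cap b|$ because $b$ lies only on the $N_1$ side; the accounting of Lemma \ref{Lem: Accounting} then translates this into the desired inequality, with the factor of $4$ inherited from $\chi(D_a Q) = 2\chi(Q)$. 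The main obstacle will be the careful verification that Lemma \ref{Lem: Choosing Sutures} applies to the doubled manifold and that the bookkeeping carries through cleanly into an inequality phrased in terms of $\chi(\ob{Q})$; the hypothesis excluding boundary compressing discs adjacent to $\boundary_1 N$ is exactly what keeps the doubled surface $D_a Q$ incompressible and $b_1$-boundary incompressible.
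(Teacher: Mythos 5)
Your overall approach is the same as the paper's: dispose of the degenerate cases via Corollary~\ref{Cor: Comparing 1}, form $D_a N$, invoke Lemma~\ref{Lem: Doubled Param}, apply Theorem~\ref{Thm: Main Theorem} to the doubled manifold, and translate the resulting inequality back, with the factor of $4$ coming from $\chi(D_a\ob{Q}) = 2\chi(\ob{Q})$. But there is a genuine gap in how you dismiss conclusion~(2) of Theorem~\ref{Thm: Main Theorem}, the (lens space, core) summand case.

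You write that ``simplicity of $N$ prevents $\beta$ from meeting a reducing sphere of $N[b]$ in exactly two points,'' but after doubling the theorem is applied to $D_a N$, so the reducing sphere $P$ produced by conclusion~(2) lives in $D_a N[b_1]$, not in $N[b]$. Such a $P$ may well intersect $\boundary_1 N$, and your argument (which amounts to: $P - \inter{\eta}(\beta)$ is an essential annulus in $N$ with both boundary circles on $F$) only applies when $P$ can be isotoped off $\boundary_1 N$, which is not automatic. The paper handles this with an extra innermost-disc argument: choose $P$ to minimize $|P\cap\boundary_1 N|$; since $\boundary_1 N$ is incompressible in both $N$ and $N[a]$ and $\beta$ is disjoint from $\boundary_1 N$, every innermost circle of $P\cap\boundary_1 N$ bounds a disc on $P$ punctured by $\beta$. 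As $|P\cap\beta|=2$, there are exactly two such innermost discs, each punctured once, and the $N$-side of one of them is an essential annulus in $N$ with one boundary on $\boundary_1 N$ and one parallel to $b$ on $F$, contradicting simplicity. You need this or an equivalent argument; your version only covers the case $P\cap\boundary_1 N=\nil$.

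Two smaller points. First, after doubling, $\boundary D_a N - F$ consists entirely of tori, so there is no $\tild{\gamma}$ to choose; the paper correctly invokes Lemma~\ref{Lem: Torus Sutures} rather than Lemma~\ref{Lem: Choosing Sutures}, and your proposed step of ``ruling out essential discs and annuli in $D_a N$ with boundary on $\wihat{\gamma}\cup b$'' is both unnecessary and not something simplicity of $N$ alone would deliver. Second, the hypothesis that $Q$ has no boundary compressing disc adjacent to $\boundary_1 N$ is consumed by Lemma~\ref{Lem: Doubled Param} (to ensure $D_a Q$ is incompressible and $b_1$-boundary incompressible), not by the suture construction as you suggest.
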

\begin{proof}
Assume neither (1) nor (2) holds. By Corollary \ref{Cor: Comparing 1}, we may assume that $N[a]$ is irreducible and boundary-irreducible. Form the doubled manifold $D_a N$. Notice that $\boundary D_a N$ consists of a single genus two surface and some tori. By Lemma \ref{Lem: Doubled Param}, $D_a Q$ has no compressing or $b_1$-boundary compressing disc. Choose $\wihat{\gamma} \subset F$ to respect $D_a Q$. By Lemma \ref{Lem: Torus Sutures} $(D_a N, \wihat{\gamma} \cup b_1)$ is taut. Apply Theorem \ref{Thm: Main Theorem} to $D_a N$. One of the following must hold:
\begin{enumerate}
\item[(i)] $(D_a N[b_1], \beta)$ has a (lens space, core) connected summand

\item[(ii)]
\[
-2\chi(D_a \ob{Q}) \geq q(\mc{M}_b(a) - 2) + q^*(\mc{M}_b(a^*) - 2) + \mc{M}_b(\boundary \ob{Q}). 
\]
\end{enumerate}

The second conclusion together with the fact that $\chi(D_a \ob{Q}) = 2\chi(\ob{Q})$ gives us our result immediately.

Assume, therefore, that there is a reducing sphere $P$ for $D_a N[b]$ intersected twice by $\beta$. Out of all such spheres choose $P$ to intersect $\boundary_1 N$ minimally. Since $N$ is simple, and $\beta$ is disjoint from $\boundary_1 N$, and $N[a]$ is boundary-irreducible, an innermost disc argument shows that any circle of $P \cap \boundary_1 N$ innermost on $P$ bounds a disc intersecting $\beta$. Hence, $P \cap \boundary_1 N$ has exactly two circles innermost on $P$. Let $D$ be one of them. The disc $D$ is punctured exactly once by $\beta$. Thus $D \cap N_1$ is an essential annulus in $N$, contrary to the hypothesis that $N$ is simple.
\end{proof}

We can apply this to prove a special case of a conjecture of Scharlemann and Wu (as described in the introduction).
\begin{corollary}\label{Cor-ScharlWu}
Suppose that $N$ is a compact orientable simple 3-manifold and that $F \subset \boundary N$ is a genus two boundary component. Let $a,b \subset F$ be essential simple closed curves that cannot be isotoped to be disjoint. Assume that $b$ is separating and that $N[b]$ is reducible or a solid torus. Then $N[a]$ is irreducible and boundary-irreducible and if $a$ is a degenerating curve, then $a$ is non-separating and $|a \cap b| = 2$. Furthermore, if $N[a]$ contains an essential annulus, then it contains one with boundary disjoint from $b \cap \boundary N[a]$.
\end{corollary}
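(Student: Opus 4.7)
\emph{Irreducibility of $N[a]$.} The plan is to feed suitable essential surfaces in $N[a]$ into the comparison propositions of Section~5. First I would show $N[a]$ is irreducible and boundary-irreducible. If not, $N[a]$ contains an essential sphere or disc $\ob{R}$, and Corollary~\ref{Cor: Comparing 1} applies because $N[b]$ is reducible or a solid torus. Its conclusion forces $a$ and $b$ to both be non-separating with all intersection signs of $a \cap b$ agreeing, contradicting the hypothesis that $b$ is separating (separating curves have trivial algebraic intersection with every other curve, so positive and negative intersections must balance as soon as $|a \cap b| \geq 1$).

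\emph{The key inequality when $a$ is degenerating.} Now suppose $a$ is degenerating. By the first part, $N[a]$ contains an essential annulus or essential torus $\ob{R}$. Replace $\ob{R}$ by an essential $\ob{Q} \leq \ob{R}$ that is minimal with respect to $b$-boundary compressions, so $Q = \ob{Q} \cap N$ is incompressible and $b$-boundary incompressible in $N$ and $-\chi(\ob{Q}) \leq -\chi(\ob{R}) = 0$. Apply Proposition~\ref{Thm: Comparing 1} (or Proposition~\ref{Thm: Comparing 2} if $\boundary \ob{Q}$ meets multiple components of $\boundary_1 N$); hypothesis~(a) or~(b) is in force. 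Conclusion~(1) is ruled out because $b$ is separating, and conclusion~(2) is ruled out because it would produce an essential annulus or torus in the simple manifold $N$. Conclusion~(3) therefore gives
\[
0 \;\geq\; q(\mc{M}_b(a)-2) + q^*(\mc{M}_b(a^*)-2) + \mc{M}_b(\boundary \ob{Q}).
\]
Because $b$ is separating, $\mc{M}_b(c) = |c \cap b| \geq 2$ for any essential closed curve $c \subset F$ meeting $b$, so each right-hand term is nonnegative and hence zero. Since $\ob{Q}$ cannot lie entirely in the simple manifold $N$, we have $\wihat q(\ob{Q}) > 0$, so $q > 0$ or $q^* > 0$; this yields $|a \cap b| = 2$ when $q > 0$ and $|a^* \cap b| = 2$ when $q^* > 0$.

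\emph{Extracting $a$ non-separating and $|a \cap b|=2$.} To rule out $a$ separating I would use that in this case $a^*$ does not exist, so $q^* = 0$ and the inequality gives $|a \cap b| = 2$; then a direct examination of $N[a]$, whose boundary acquires two new torus components from the two sides of $a$, together with the essentiality of $\ob{R}$ and simplicity of $N$, forces $\ob{Q}$ to descend to an essential annulus or torus in $N$, a contradiction. To rule out the subcase $q = 0,\, q^* > 0$ with $a$ non-separating, I would use the freedom in choosing $a^*$ and the alternative double-handle-plus-ball description of $N[a]$: after isotopy and reselection of $a^*$ so that $\ob{R}$ meets the $a$-handle, we obtain $q > 0$ and hence $|a \cap b| = 2$. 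This is the step I expect to be the main obstacle, since the bare inequality does not on its own distinguish $|a \cap b|$ from $|a^* \cap b|$, nor separating $a$ from non-separating $a$; additional topological input tailored to the specific handle addition along $a$ is needed.

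\emph{An essential annulus with boundary missing $b$.} For the last sentence, take $\ob{R}$ to be an essential annulus in $N[a]$ and rerun the above; the resulting $\ob{Q}$ is again an essential annulus (sphere and disc components are excluded by the first part, and $-\chi(\ob{Q}) \leq 0$). The vanishing of $\mc{M}_b(\boundary \ob{Q})$ means $\boundary \ob{Q}$ has no sign-changing intersections with $b$ on $\boundary_0 N[a]$, and since $b$ is separating every intersection of an arc with $b$ contributes a sign change, so $\boundary \ob{Q} \cap b = \nil$. This exhibits an essential annulus in $N[a]$ whose boundary is disjoint from $b \cap \boundary N[a]$, completing the proof.
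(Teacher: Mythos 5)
Your set-up is right and matches the paper's route: Corollary~\ref{Cor: Comparing 1} gives irreducibility and boundary-irreducibility of $N[a]$, and then you pass to a minimal $\ob{Q} \leq \ob{R}$ and apply Propositions~\ref{Thm: Comparing 1}/\ref{Thm: Comparing 2} to get
\[
0 \geq q(|a \cap b| - 2) + q^*(|a^* \cap b| - 2) + |\boundary \ob{Q} \cap b| \geq 0.
\]
(One technicality you omit: to invoke Proposition~\ref{Thm: Comparing 2} you must first rule out a boundary compressing disc for $Q$ adjacent to $\boundary_1 N$; the paper does this by noting that such a compression would produce an essential disc in the boundary-irreducible $N[a]$.)

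The place you correctly flagged as ``the main obstacle'' is where your attempt genuinely fails, and the fix is much more elementary than the ``reselection of $a^*$'' idea you sketch. The missing input is the following standard fact about a closed genus two surface: two essential separating simple closed curves that cannot be isotoped to be disjoint must, after minimal-position isotopy, intersect in at least four points. Since $a^*$ is either empty or separating and $b$ is separating, if $q^* > 0$ the inequality would force $|a^* \cap b| = 2$. But $a^*$ and $b$ cannot be disjoint either (if they were, $a$ would lie in a once-punctured torus disjoint from $b$, contradicting $|a\cap b|\geq 1$), so $|a^* \cap b| \geq 4$, a contradiction. Hence $q^* = 0$. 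Since $N$ is simple, $\wihat q(\ob{Q}) > 0$, so $q > 0$ and the inequality gives $|a \cap b| = 2$. Now invoke the same genus-two fact once more: since $b$ is separating and $|a \cap b| = 2$, the curve $a$ cannot be separating. So $a$ is non-separating and $|a\cap b|=2$ in one stroke; no case split on $a$ separating versus non-separating, and no re-choice of $a^*$, is needed. Your ``direct examination of $N[a]$'' to rule out $a$ separating is therefore unnecessary and, as stated, too vague to carry weight. Your final paragraph on the essential annulus is essentially the paper's argument and is correct.
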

\begin{proof}
By Corollary \ref{Cor: Comparing 1}, $N[a]$ is irreducible and boundary-irreducible.

Let $\ob{R}$ be an annulus or torus in $N[a]$. Let $\ob{Q} \leq \ob{R}$ be minimal. If $\ob{Q}$ had a boundary-compressing disc adjacent to $\boundary_1 N$, it must be an annulus. Boundary-compressing it creates an essential disc, contrary to our initial observation. Thus, we may assume that $\ob{Q}$ does not have an boundary compressing disc adjacent to $\boundary_1 N[b]$.

Since $b$ is separating and since $N$ is simple, by Propositions \ref{Thm: Comparing 1} and \ref{Thm: Comparing 2}, we have
\[
0 = -\epsilon\chi(\ob{Q}) \geq q(|a \cap b| - 2) + q^*(|a^* \cap b| - 2) + |\boundary \ob{Q} \cap b| \geq 0, 
\]
for some $\epsilon \in \{2,4\}$. This implies that $|\boundary \ob{Q} \cap b| = 0$.

Recall that any two minimally intersecting, non-disjoint simple closed separating curves on a genus two surface must intersect at least four times. Since $a^*$ is empty or separating, we must have $q^* = 0$. Since $N$ is simple, we must have $q > 0$. Consequently, $|a \cap b| = 2$. Since $b$ is separating, this implies that $a$ is non-separating.
\end{proof}

\section{Refilling Meridians}\label{Refilling}

We recall (from the introduction) the notion of ``refilling meridians''. Let $M$ be a compact orientable 3--manifold, and let $W \subset M$ be an embedded genus 2 handlebody. Let $A$ and $B$ be essential discs in $W$ that cannot be isotoped to be disjoint. Assume that $A$ and $B$ intersect minimally. Define $a = \boundary A$, $b = \boundary B$, $F = \boundary W$, and $N = M - \inter{W}$. We say that the 3--manifolds $N[a]$ and $N[b]$ are obtained by \defn{refilling} the meridians $A$ and $B$ respectively of $W$ in $M$. Notice that an outermost arc of intersection between $A$ and $B$ on $A$ cuts off a subdisc of $A$ that is a meridian of one of the solid tori obtained by boundary-reducing $W$ using $B$. In particular, this means that $\mc{M}_b(a) \geq 2$. Recall that $L_a$ and $L_b$ denote the core or cores of the solid torus or tori $W - \inter{\eta}(A)$ and $W - \inter{\eta}(B)$ respectively. If $a$ is non-separating and if $a^* \subset F$ bounds a once-punctured torus in $F$ containing $a$, let $A^*$ be a disc in $W$ bounded by $a^*$.

In \cite{MSc5}, Scharlemann considered refilling meridians of a genus two handlebody $W$ in a wide variety of compact, orientable 3--manifolds $M$. He showed in a number of situations that if both $N[a]$ and $N[b]$ are reducible or boundary-irreducible (with minor assumptions on the embedding of $W$ in $M$) then either $M = S^3$ and $W$ is unknotted or $\alpha$ and $\beta$ are ``aligned'' in $W$. He conjectured that this is always the case (given his hypotheses on the pair $(M,W)$). In \cite{T1}, under slightly different hypotheses, significant progress was made on the conjecture. One case not covered by \cite{T1}, was the case when both $N[a]$ and $N[b]$ are solid tori. Corollary \ref{Cor: MSC 2}, a corollary to the next proposition, shows that if $|a \cap b| > 0$, it is impossible for both $N[a]$ and $N[b]$ to be solid tori. In fact, Corollary \ref{Cor: MSC 2} gives a stronger result with weaker hypotheses than \cite[Theorem 6.1]{T1}.

\begin{proposition}\label{Thm: MSC 2}
Suppose that $W$ is a genus 2 handlebody embedded in a compact, orientable manifold $M$. Let $A$ and $B$ be essential discs in $W$ which cannot be isotoped to be disjoint. Suppose that $\ob{Q}$ is a suitably embedded essential surface in $N[a]$ such that $Q = \ob{Q} \cap N$ is incompressible and $b$-boundary-incompressible and not disjoint from $b$. Furthermore, assume that all components of $\boundary \ob{Q} \cap \boundary_1 N[b]$ are parallel to a single simple closed curve $c_a$.

Assume the following:
\begin{enumerate}
\item[(H1)] If a curve $c_b \subset \boundary M$ compresses in $N[b]$ then $c_a$ and $c_b$ are on the same component of $\boundary M$.
\item[(H2)] If $P \subset M$ is a sphere such that either $P$ is non-separating, or $P$ bounds a lens space summand of $M$, or $P$ is embedded and essential in $N[b]$ but inessential in $M$, then $|P \cap (L_b \cup \beta)| \geq 3$.
\item[(H3)] If $P$ is an essential disc in $M$ then $|P \cap (L_a \cup \alpha)| \geq 2$ and $|P \cap (L_b \cup \beta)| \geq 2$.
\item[(H4)] $N = M - \inter{W}$ is irreducible.
\end{enumerate}

Suppose one of the following:
\begin{enumerate}
\item $N[b]$ is reducible
\item $N[b]$ is boundary-reducible
\item There is a non-zero class $y \in H_2(N[b],\boundary N[b])$ such that no conditioned norm-minimizing representative is disjoint from $\beta$. If $b$ is separating, also assume that the projection of $\boundary y$ to the first homology of each component of $\boundary_0 N[b]$ is non-zero.
\end{enumerate}

Then either $c_a$ and $c_b$ cannot be isotoped to be disjoint or
\[-2\chi(\ob{Q}) \geq q(\mc{M}_b(a) - 2) + q^*(\mc{M}_b(a^*) - 2) + \mc{M}_b(\boundary \ob{Q}).\]
\end{proposition}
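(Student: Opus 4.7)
The plan is to parallel the proof of Proposition \ref{Thm: Comparing 1}: install a taut sutured structure on $N$, apply Theorem \ref{Thm: Main Theorem} to $Q$, eliminate the degenerate conclusions using (H1)--(H4) and the assumption in force, and then translate conclusion (4) via Lemma \ref{Lem: Accounting}. Assume $c_a$ and $c_b$ can be isotoped to be disjoint. Choose $\wihat{\gamma}\subset F$ respecting $\ob{Q}$, and define an auxiliary curve $c_2\subset\boundary M$ by setting $c_2=\nil$ whenever assumption (1) or (3) holds, or no compressing curve $c_b$ for $N[b]$ exists, or $c_a$ itself compresses in $N[b]$; otherwise set $c_2=c_b$. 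By (H1), $c=c_a\cup c_2$ lies on a common component of $\boundary_1 N$. One verifies the hypotheses of Lemma \ref{Lem: Choosing Sutures} using (H3) (which rules out the forbidden essential discs and annuli) and (H4) (irreducibility), producing sutures $\tild{\gamma}$ disjoint from $c$ making $\gamma=\wihat{\gamma}\cup\tild{\gamma}\cup b$ a taut sutured structure on $N$.

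Apply Theorem \ref{Thm: Main Theorem} to $Q$ in $(N,\gamma)$. Conclusion (1) is excluded by the incompressibility and $b$-boundary incompressibility of $Q$. For conclusion (2), the decomposition $(N[b],\beta)=(M'_0,\beta'_0)\#(M'_1,\beta'_1)$ with $M'_1$ a lens space yields a sphere $P\subset\inter{N[b]}$ with $|P\cap\beta|=2$, given by the two endpoints of the sub-arc $\beta'_1$. Since $M'_1$ is closed, every boundary component of $N[b]$---in particular the tori of $\boundary_0 N[b]$ along which $\eta(L_b)$ is glued to reconstitute $M$ from $N[b]$---lies on the $M'_0$ side of $P$. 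Consequently, in $M$ the sphere $P$ still bounds the lens space summand $M'_1$, with $|P\cap L_b|=0$ and $|P\cap\beta|=2$, contradicting (H2).

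For conclusion (3), we argue under each of the three assumptions: (1) is incompatible with the tautness of $(N[b],\gamma-b)$; (3) is negated directly by the fact that the first decomposing surface of the promised hierarchy is a conditioned norm-minimizing representative of the chosen $y$ from which $\beta$ can be properly isotoped disjoint. Under (2), take $y$ to be the homology class of a compressing disc $E$ for $\boundary N[b]$, so that the first surface of the hierarchy is a conditioned norm-zero representative disjoint from $\beta$. Case analysis on the boundary slopes of this representative's essential disc and annulus components---those with boundary in $\boundary M$ yield essential discs in $M$ disjoint from $L_b\cup\beta$, contradicting (H3); meridional-on-$\boundary_0 N[b]$ components can be capped off in $\eta(L_b)$ to produce spheres whose intersection count with $L_b\cup\beta$ is at most two, to which (H2) applies---completes the contradiction. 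Conclusion (4) therefore holds, giving $-2\chi(Q)+|Q\cap\gamma|\geq 2|Q\cap b|$, and Lemma \ref{Lem: Accounting} converts this into the stated inequality. The main obstacle will be executing the case analysis under assumption (2): reliably promoting a conditioned norm-zero representative in $N[b]$ to a sphere or disc in $M$ that triggers (H2) or (H3) requires careful attention to the boundary slopes of the representative's components on the tori of $\boundary_0 N[b]$ relative to the meridians of the refilled solid tori $\eta(L_b)$, and in particular to ruling out non-meridional longitudinal boundaries that fit cleanly into neither hypothesis.
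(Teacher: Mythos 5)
Your setup mirrors the paper's: same choice of sutures $\wihat{\gamma}$ respecting $\ob{Q}$, same auxiliary collection $c = c_a \cup c_2$ fed into Lemma \ref{Lem: Choosing Sutures}, same appeal to Theorem \ref{Thm: Main Theorem} and Lemma \ref{Lem: Accounting}, and your disposal of conclusions (1) and (2) of Theorem \ref{Thm: Main Theorem} is correct. The problem is how you eliminate conclusion (3) of Theorem \ref{Thm: Main Theorem} under assumption (2) of the proposition.

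You already build $c_b$ into $c$ and arrange $\tild{\gamma}$ disjoint from it, but then you abandon the direct payoff of that choice and instead pass to the homology class $y$ of a compressing disc $E$ and attempt to contradict the existence of a conditioned norm-zero representative disjoint from $\beta$ by a case analysis on its components. You yourself flag that this analysis does not close: non-meridional components on $\boundary_0 N[b]$ slip through, and annulus components with both boundaries on $\boundary_1 N[b]$ are not addressed at all. The paper avoids this entirely. Because $\tild{\gamma}$ is disjoint from $c_b$ and $c_b$ is essential in $\boundary_1 N[b]$, the curve $c_b$ lies as an essential curve in $R_\pm(\gamma - b)$; its compressing disc in $N[b]$ then exhibits $R_\pm(\gamma - b)$ as compressible, so $(N[b], \gamma - b)$ fails tautness and conclusion (3) of Theorem \ref{Thm: Main Theorem} is ruled out outright, with no need to inspect any hierarchy surface. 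The $y$-class argument is reserved in the paper only for the residual case where no $c_b$ exists but $N[b]$ is still boundary-reducible, which forces $N[b]$ to be a solid torus (by irreducibility the compressed sphere bounds a ball, so $\boundary_0 N[b]$ can compress only if it is the sole boundary component); there one takes $y$ to be the class of the unknotting disc. Your write-up misses this case split entirely, and the case analysis you do attempt in its place has the gap you already identified. Replace it with the tautness-failure argument via $c_b$, and treat the solid-torus case separately, and the proof closes.
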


The proof is similar to that of Corollary \ref{Cor: Comparing 1}.
\begin{proof}
Choose sutures $\wihat{\gamma}$ on $\boundary_0 N$ so that $\wihat{\gamma} \cup b$ respects $Q$. Since $\ob{Q}$ is not disjoint from $\alpha$, any outermost disc of $A - B$ or $A^* - B$ is a meridian of $\eta(L_b)$. Thus, if $\wihat{\gamma} \neq \nil$, both components are meridians of $L_b$. If a component of $F - \wihat{\gamma} \cup b$ were compressible, we would violate (H2), so $F - \wihat{\gamma} \cup b$ is incompressible in $N$.

Let $c_1 = c_a$. If $c_1$ compresses in $N[b]$, or if $N[b]$ is reducible, let $c_2 = c_1$. Otherwise, let $c_2 = c_b$. Thus, $|c| = 2$ if and only if $c_1$ are non-empty and $c_2$ compresses in $N[b]$ but $c_1$ does not.

Hypothesis (H3) implies that $\boundary_1 N$ is incompressible in $N$. If there were an essential annulus in $N$ with boundary on $\wihat{\gamma} \cup b$, it could be capped off in $M$ to be a non-separating sphere intersecting $L_b$ twice. This contradicts Hypothesis (H2).

If a curve of $c$ bounded an essential annulus with a curve of $\wihat{\gamma} \cup b$, it could be capped off in $M$ to be an essential disc intersecting $L_b \cup \beta$ exactly once. This contradicts Hypothesis (H3).

If $|c| = 2$ and there were an essential annulus in $N$ with boundary $c$, then $c_1$ would compress in $N[b]$, contradicting our construction of $c$.

If $|c| = 2$ and there were a thrice-punctured sphere with boundary $c \cup b$, then, once again $c_1$ would compress in $N[b]$. This contradicts our construction of $c$.

We can, therefore, apply Lemma \ref{Lem: Choosing Sutures} to obtain sutures $\gamma \subset \boundary N$ respecting $\ob{Q}$ and disjoint from $c$.  Notice that either $(N[b], \gamma - b)$ is not taut or that there is no conditioned taut representative of $y$ disjoint from $\beta$.

Since $\ob{Q}$ is essential in $N[a]$, since $Q$ is incompressible and $b$-boundary incomrpessible, and since $(N,\gamma)$ is taut, no component of $Q$ is a sphere or disc disjoint from $\gamma$.

Thus, by Theorem \ref{Thm: Main Theorem} and Lemma \ref{Lem: Accounting} one of the following occurs:
\begin{enumerate}
\item[(i)] $(N[b],\beta)$ has a (lens space, core) summand.
\item[(ii)] $(N[b],\gamma - b)$ is taut and there is a conditioned norm-minimizing surface disjoint from $\beta$ and representing $y$.
\item[(iii)] \[-2\chi(\ob{Q}) \geq q(\mc{M}_b(a) - 2) + q^*(\mc{M}_b(a^*) - 2) + \mc{M}_b(\boundary \ob{Q}).\]
\end{enumerate}
The first of these is ruled out by Hypothesis (H2). If $N[b]$ is reducible or if $c_b \neq \nil$, then the second is also ruled out. If $N[b]$ is a solid torus, then an unknotting disc for $L_b$ represents some non-zero class $y \in H_2(N[b],\boundary N[b])$. Such a disc is clearly norm-minimizing. We assume that no such norm-minimizing surface disjoint from $\beta$ exists and so (ii) cannot hold with any of our hypotheses. Consequently (iii) holds, as desired.
\end{proof}

The following corollary is phrased to make the connection to \cite[Conjecture 2]{MSc5} explicit. 

\begin{corollary}\label{Cor: MSC 2}
Assume that $W$ is a genus 2 handlebody embedded in a compact, orientable 3--manifold $M$ such that every $S^2$ in $M$ separates, $M$ contains no lens space connected summands, any pair of curves in $\boundary M$ that compress in $M$ are isotopic in $\boundary M$, and $N = M - \inter{W}$ is irreducible and boundary-irreducible. If $a$ and $b$ are essential closed curves on $\boundary W$ bounding discs in $W$ that cannot be isotoped to be disjoint then one of the following occurs:
\begin{enumerate}
\item One of $N[a]$ and $N[b]$ is irreducible and boundary-irreducible
\item There exists an essential annulus $A \subset N$ such that one component of $\boundary A$ lies on a component of $\boundary M$ and the other lies on $\boundary W$, is disjoint from $a$ or $b$ and bounds a disc in $W$.
\end{enumerate}
\end{corollary}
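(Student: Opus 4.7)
The plan is to argue by contradiction via Proposition~\ref{Thm: MSC 2}. Suppose neither (1) nor (2) holds; then both $N[a]$ and $N[b]$ are reducible or boundary-reducible, and $N$ contains no essential annulus of the type described in (2). By symmetry, I may assume $N[a]$ contains an essential sphere or disc $\ob{R}$. Apply Theorem~\ref{Thm: Constructing Q} iteratively to produce $\ob{Q} \leq \ob{R}$ that is minimal with respect to $b$-boundary compressions; by parts (3)--(4) of that theorem, $\ob{Q}$ is still a sphere or disc. Since $N$ is irreducible and boundary-irreducible, we have $\wihat{q}(\ob{Q}) > 0$ (otherwise $\ob{Q} \subset N$ would be essential in $N$), and then the minimality lemma shows $Q = \ob{Q} \cap N$ is incompressible and $b$-boundary incompressible. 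As $\ob{Q}$ has at most one boundary component, the ``parallel to a single curve $c_a$'' hypothesis of the proposition is trivially met.

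I would then verify hypotheses (H1)--(H4) of Proposition~\ref{Thm: MSC 2}. (H4) is given. For (H1), any compressing curve of $\boundary M$ in $N[a]$ or $N[b]$ also compresses in $M$, and the isotopy hypothesis on $M$ then forces these to lie on a single component. For (H2), the sphere-separation and no-lens-space-summand hypotheses dispatch the first two cases; a sphere essential in $N[b]$ but inessential in $M$ must bound a ball in $M$ containing a component of $\eta(L_b)$, and a standard ball-containment argument using irreducibility of $N$ shows it meets $L_b \cup \beta$ at least three times. The crucial verification is (H3): if an essential disc $P \subset M$ has $|P \cap (L_b \cup \beta)| \leq 1$, then $|P \cap (L_b \cup \beta)| = 0$ gives a compressing disc for $\boundary_1 N$ in $N$ (contradicting $N$ boundary-irreducible), while $|P \cap (L_b \cup \beta)| = 1$ reduces, after standard simplification, to $P \cap W$ being a single meridian disc $D'$. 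If $D'$ meets $L_b$ once (and is disjoint from $\beta$), the complementary annulus $P \setminus D' \subset N$ realizes conclusion (2), contradicting our assumption; if $D'$ is isotopic to $B$, the analogous analysis of $L_a \cup \alpha$ via the same $P$ yields the same contradiction.

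With the hypotheses verified, apply Proposition~\ref{Thm: MSC 2}. Since $N[b]$ is reducible or boundary-reducible, one of the proposition's conditions (1) or (2) is in force, and its conclusion is either that $c_a$ and $c_b$ cannot be isotoped disjoint, or the inequality
\[
-2\chi(\ob{Q}) \geq q(\mc{M}_b(a) - 2) + q^*(\mc{M}_b(a^*) - 2) + \mc{M}_b(\boundary \ob{Q}).
\]
The first alternative is impossible: by (H1) and the hypothesis on $\boundary M$, $c_a$ and $c_b$ are isotopic in $\boundary M$, so they can always be made disjoint. For the inequality, $\ob{Q}$ being a sphere or disc yields $-2\chi(\ob{Q}) < 0$; meanwhile $\mc{M}_b(a) \geq 2$ by the outermost-arc observation recalled at the start of Section~\ref{Refilling}, $\mc{M}_b(a^*) \geq 2$ whenever $q^* > 0$ (using that $a^*$ is separating, so $a^* \cap b$ has algebraic intersection zero and hence at least one sign change, with the degenerate case $a^* \cap b = \emptyset$ ruled out by analyzing when $q^*$ can contribute), and $\mc{M}_b(\boundary \ob{Q}) \geq 0$. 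Thus the right side is non-negative, a contradiction that completes the proof.

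The principal obstacle is the verification of (H3): carefully extracting the annulus of conclusion (2) when the $W$-part of an offending essential disc is a longitudinal meridian, and symmetrically handling the case when it is isotopic to $A$ or $B$. A secondary combinatorial point is confirming $\mc{M}_b(a^*) \geq 2$ whenever $q^* > 0$, which requires eliminating the configuration in which $a$ and $b$ share a common once-punctured torus bounded by $a^*$.
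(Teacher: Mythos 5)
Your proposal follows essentially the same route as the paper: choose an essential sphere or disc $\ob{R}$ in one of $N[a]$, $N[b]$ (WLOG $N[a]$), pass to a minimal surface $\ob{Q} \leq \ob{R}$, verify (H1)--(H4), apply Proposition~\ref{Thm: MSC 2}, and contradict the resulting inequality using that $\ob{Q}$ has non-negative Euler characteristic and $\mc{M}_b(a), \mc{M}_b(a^*) \geq 2$. The extra points you flag --- explicitly ruling out the alternative conclusion of Proposition~\ref{Thm: MSC 2} that $c_a$ and $c_b$ cannot be made disjoint, and checking $\mc{M}_b(a^*) \geq 2$ via the common-once-punctured-torus analysis --- are implicit in the paper's proof, and your more detailed handling of (H3) (extracting the annulus of conclusion (2) from an offending essential disc in $M$) matches the paper's brief remark. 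The only discrepancy is minor: in your verification of (H2) you invoke irreducibility of $N$ where the paper's third case actually hinges on boundary-irreducibility of $N$ (a sphere $P$ embedded in $N[b]$ is automatically disjoint from $L_b$, so a small-intersection $P$ gives rise to a compressing disc for $F$ in $N$); both properties are among the corollary's hypotheses, so this does not affect correctness of the argument.
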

\begin{proof}
Assume that $N[b]$ is reducible or boundary-reducible. If $N$ is boundary-reducible, but not reducible then either $N[b]$ is a solid torus or there is a unique simple closed curve $c \subset \boundary_1 N$ that compresses in $M$ (and in $N[b]$). This implies (H1).

Since $M$ contains no lens space summands and no non-separating 2--spheres and since $N$ is boundary-irreducible, (H2) is satisfied. Any essential disc in $M$ must have boundary parallel to $c$. Suppose that $P$ is such a disc with either $|P \cap (L_a \cup \alpha)| = 1$ or $|P \cap (L_b \cup \beta)| = 1$. Then (possibly after a small isotopy) the intersection of $P$ with $N$ is an annulus in $N$ with one boundary component on $\boundary W$ disjoint from $a$ or $b$ and bounding a disc in $W$. This is conclusion (2). Hence, we may assume that (H3) is satisfied. (H4) is satisfied by hypothesis.

If $N[a]$ is reducible, let $\ob{R}$ be an essential sphere. If $N[b]$ is irreducible, but boundary-reducible, let $\ob{R}$ be an essential disc. Let $\ob{Q} \leq \ob{R}$ be minimal with respect to $b$-boundary compressions. Since $N$ is irreducible and boundary-irreducible, $\wihat{q}(\ob{Q}) > 0$. Thus, $Q$ is $b$-boundary incompressible and not disjoint from $b$. Since $\ob{Q}$ is either a disc or a sphere, either $\ob{Q}$ is a sphere or $\boundary \ob{Q}$ is parallel to $c$. Thus, by Proposition \ref{Thm: MSC 2}, 
\[
-2 \geq q(\mc{M}_b(a) - 2) + q^*(\mc{M}_b(a^*)  - 2) + \mc{M}_b(\boundary \ob{Q} + 2).
\]
Each term of the right hand side of the inequality is non-negative. We have, therefore, encounted a contradiction.
\end{proof}

If $M = S^3$ we define a boring arc $\beta$ to be \defn{complicated} if one of the following holds:
\begin{itemize}
\item $L_b$ is a split link and $\beta$ does not intersect a splitting sphere exactly once.
\item $L_b$ is not a split link and $\beta$ is not disjoint from any minimal genus Seifert surface for $L_b$.
\end{itemize}
It turns out that the exterior of a knot or link and complicated boring arc is boundary-irreducible. (See also \cite{J}, \cite{S2}, and \cite[Theorem 10.1]{T3}.)
\begin{lemma}\label{Lem: comp arc implies incomp boundary}
Suppose that $L_b \subset S^3$ is a knot or 2-component link and that $\beta$ is a complicated boring arc. Let $N$ be the exterior of $L_b \cup \beta$. Then $N$ is boundary-irreducible.
\end{lemma}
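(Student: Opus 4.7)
The plan is to assume for contradiction that $D \subset N$ is a compressing disc for $\boundary N = F$ and to use $D$ to produce either a minimal genus Seifert surface for $L_b$ disjoint from $\beta$ (when $L_b$ is not split) or a splitting sphere meeting $\beta$ in exactly one point (when $L_b$ is split), either of which contradicts the hypothesis that $\beta$ is complicated. Let $B$ denote the core disc of the 2-handle of $N[b]$, so $\boundary B = b \subset F$ and $|B \cap \beta| = 1$. I first isotope $D$ within its class in $N$ to minimize $|\boundary D \cap b|$ and split the analysis based on whether this minimum vanishes.

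When $|\boundary D \cap b| = 0$, the disc $D$ is properly embedded in $N[b]$ with $\boundary D \subset F - \eta(b) \subset \boundary N[b]$ and disjoint from $\beta$. If $\boundary D$ is essential in $\boundary N[b]$, then $D$ compresses the exterior of $L_b$, forcing some component $L'$ of $L_b$ to be unknotted and split from the rest: for $L_b$ a knot, $D$ extends to a Seifert disc disjoint from $\beta$, contradicting complicated; for $L_b$ a 2-component link, the boundary of a thin regular neighborhood of $L' \cup D$ is a splitting sphere meeting $\beta$ exactly once (because $\beta$ connects $L'$ to the other component and is disjoint from $D$), also contradicting complicated. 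If $\boundary D$ is inessential in $\boundary N[b]$ but essential in $F$, then $\boundary D$ is parallel in $F$ to $b$; the 2-sphere $D \cup A \cup B \subset S^3$, with $A$ the cobounding annulus on $F$, is disjoint from $L_b$ and meets $\beta$ exactly once, violating parity when $L_b$ is a knot and yielding a forbidden splitting sphere when $L_b$ is a link.

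For $|\boundary D \cap b| > 0$, the plan is to apply Theorem \ref{Thm: Main Theorem} to $Q = D$ inside $(N,\gamma)$, where $\gamma = \wihat{\gamma} \cup b$ and $\wihat{\gamma}$ is chosen to respect $\boundary D$. The irreducibility of $N$ (which follows from $L_b \cup \beta$ being a connected 1-complex in $S^3$), together with Lemma \ref{Lem: Torus Sutures}, makes $(N,\gamma)$ taut provided $F - \gamma$ is incompressible in $N$; if not, a simpler compressing disc with boundary disjoint from $b$ reduces to the previous case. The conclusions of Theorem \ref{Thm: Main Theorem} are then dismantled one by one: (1) is ruled out because a disc admits no compressing disc, and the minimality of $|\boundary D \cap b|$ forbids a $b$-boundary compressing disc; (2) fails since $N[b] \subset S^3$ admits no nontrivial lens space connected summand; and (4), rewritten via Lemma \ref{Lem: Differing Intersections}(4) using $\chi(D) = 1$ and the choice of $\wihat{\gamma}$ respecting $\boundary D$, reduces to the impossible inequality $\mc{M}_b(\boundary D) \leq -2$. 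This leaves conclusion (3), which produces a conditioned taut hierarchy of $N[b]$ whose first decomposing surface represents any prescribed nonzero class $y \in H_2(N[b], \boundary N[b])$ and is disjoint from $\beta$. Choosing $y$ to be a Seifert class of $L_b$ yields a minimal genus Seifert surface disjoint from $\beta$, directly contradicting complicated in the non-split case.

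The main obstacle is the split-link case of complicated: the conditioned norm-minimizing surface produced by conclusion (3) of Theorem \ref{Thm: Main Theorem} is a Seifert-type surface, not automatically a splitting sphere, so for a split 2-component $L_b$ one must extract a forbidden splitting sphere meeting $\beta$ exactly once. This requires further manipulation of the hierarchy—for instance, decomposing successively along product discs or extracting a reducing sphere from the complement of a Seifert surface that realizes a split-component homology class—and it is the delicate part of the proof.
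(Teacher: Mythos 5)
Your overall strategy matches the paper's: minimize $|\boundary D \cap b|$, split on whether this is zero, and in the positive case feed $Q = D$ with sutures respecting $\boundary D$ into Theorem \ref{Thm: Main Theorem}, ruling out conclusions (1), (2), (4) and extracting a contradiction from (3). However, there is a genuine gap in your Case $|\boundary D \cap b| = 0$, and your flagged ``main obstacle'' in the other case is not actually an obstacle.

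The gap: when $\boundary D$ is inessential in $\boundary N[b]$ but essential in $F$, you assert that $\boundary D$ must be parallel in $F$ to $b$. That is false. If $b$ is non-separating and $\boundary D$ bounds a disc on the torus $\boundary_0 N[b]$ that contains both copies of the attaching disc, then $\boundary D$ is a \emph{separating} curve on $F$ cutting off a once-punctured torus containing $b$ (an $a^*$-type curve); it cobounds a pair of pants with the two copies of $\boundary \eta(b)$, not an annulus with $b$. This subcase does not yield your sphere $D \cup A \cup B$, and you never dispose of it. The paper's proof explicitly separates off exactly this situation and observes (sketchily) that it produces a minimal genus Seifert surface for the knot $L_b$ disjoint from $\beta$, contradicting complicatedness; you need some argument of this kind for the subcase you omitted.

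The non-issue: in the case $|\boundary D \cap b| > 0$, you worry that when $L_b$ is a split 2-component link, conclusion (3) of Theorem \ref{Thm: Main Theorem} produces only a Seifert-type surface rather than a splitting sphere meeting $\beta$ once. But when $b$ separates, $\wihat{\gamma} = \nil$, so conclusion (3) asserts in particular that $(N[b],\nil)$ is taut, i.e.\ that $N[b]$ is irreducible. If $L_b$ is split, $N[b]$ is reducible, so conclusion (3) fails outright, and with (1), (2), (4) also excluded you already contradict Theorem \ref{Thm: Main Theorem} with no need to manipulate the hierarchy. The delicate surgery on the hierarchy you contemplate is unnecessary.
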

\begin{proof}
Let $W = \eta(L_b \cup \beta)$ and let $F = \boundary W = \boundary N$. Suppose that $D$ is a compressing disc for $F$ in $N$, chosen so that out of all such discs, $|\boundary D \cap b|$ is minimal.

\textbf{Case 1:} $\boundary D \cap b = \nil$.

If $\boundary D$ bounds a disc in $W$, then either $\boundary D$ is parallel to $b$ or $b$ is non-separating and $\boundary D$ bounds a once-punctured torus in $F$ containing $b$. In the former case, $L_b$ is a split link and $\beta$ intersects a splitting sphere exactly once. In the latter case, it is not difficult to see that there is a minimal genus Seifert surface for $L_b$ disjoint from $\beta$. Thus, $\boundary D$ does not bound a disc in $W$. This implies that one component of $L_b$ is an unknot and that $D$ is an unknotting disc for that component disjoint from $\beta$. If $b$ is non-separating this obviously contradicts the definition of complicated boring arc. If $b$ is separating, boundary compressing a component of $\boundary \eta(L_b)$ using $D$ produces a splitting sphere for $L_b$ intersecting $\beta$ exactly once. This also contradicts the definition of complicated boring arc. Consequently, this case cannot occur.

\textbf{Case 2:} $\boundary D \cap b \neq \nil$.

Let $Q = \boundary D$. Notice that by minimality of $|D \cap b|$, $Q$ is $b$-boundary incompressible. It is obviously incompressible. Let $\wihat{\gamma} \subset F$ be sutures disjoint from $b$ and respecting $Q$. Let $\gamma = \wihat{\gamma} \cup b$. It is argued in the proof of \cite[Theorem 10.1]{T3} that $|\boundary Q \cap \wihat{\gamma}| \leq |\boundary Q \cap b|$. Thus, since $Q$ is a disc
\[
-2\chi(Q) + |Q \cap \gamma| \leq 2|Q \cap b|.
\]
Thus, by Theorem \ref{Thm: Main Theorem}, $(N[b],\gamma - b)$ is $\nil$-taut and a proper isotopy of $\beta$ makes it disjoint from a minimal genus Seifert surface for $L_b$. This contradicts the definition of complicated boring arc, and so this case cannot occur either.
\end{proof}

The next corollary shows that many knots obtained by boring using a complicated boring arc are simple (i.e. have simple exteriors).

\begin{corollary}\label{Cor: Non-simple Knot}
Suppose that $L_a$ is a non-simple knot or 2--component link in $S^3$ obtained by boring a knot or 2--component link $L_b \subset S^3$ using a complicated boring arc. Then $L_a$ is not a split link or unknot and one of the following occurs:
\begin{enumerate}
\item The exterior of $L_a$ has an essential torus or annulus disjoint from $\alpha$. 
\item $L_a$ or $L_b$ is a knot and $\mc{M}_b(a) = 2$. If $L_b$ is a 2--component link, this implies $|a \cap b| = 2$.
\item $L_a$ and $L_b$ are both knots, there exists a 2--component link $L_{a^*}$ containing $L_a$ as a component, $L_{a^*}$ is non-simple, and $a^*$ is an essential separating meridional curve on $\boundary \eta(L_a \cup \alpha)$ disjoint from $a$ such that $\mc{M}_b(a^*) = 2$.
\end{enumerate}
Furthermore, if $L_a$ is a knot and $L_b$ is a link, then if there is an essential annulus in the exterior of $L_a$, there is one with meridional boundary.
\end{corollary}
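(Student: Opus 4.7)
The plan is to apply Proposition \ref{Thm: MSC 2} to various essential surfaces in $N[a]$, taking $M=S^3$, $W=\eta(L_b\cup\beta)$, and $N=S^3-\inter{W}$, so that $N[a]$ and $N[b]$ are the exteriors of $L_a$ and $L_b$. The first step is to verify the hypotheses (H1)--(H4) for this setup. (H1) and (H3) are vacuous in $S^3$. Lemma \ref{Lem: comp arc implies incomp boundary} supplies boundary-irreducibility of $N$. Irreducibility of $N$ (hypothesis (H4)) is then a direct consequence of the definition of complicated: a reducing sphere for $N$ would be an $S^2\subset S^3$ disjoint from $L_b\cup\beta$, forcing a splitting sphere for $L_b$ disjoint from $\beta$, contradicting the complicated hypothesis (by exhibiting a splitting sphere in the link case, and by boundary-compressing $\eta(L_b)$ to produce an unwanted minimal-genus Seifert surface or splitting sphere otherwise). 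An analogous case analysis, on spheres meeting $L_b\cup\beta$ in one or two points, should establish (H2).

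Next I would rule out the possibilities that $L_a$ is split or an unknot. If either holds, $N[a]$ contains an essential sphere or disc $\ob{R}$. Iteratively applying Theorem \ref{Thm: Constructing Q} produces a $\ob{Q}\leq \ob{R}$, still a sphere or disc, which is minimal with respect to $b$-boundary compressions. Since $N$ is irreducible and boundary-irreducible, $\wihat{q}(\ob{Q})>0$. Proposition \ref{Thm: MSC 2} then yields
\[
-2\chi(\ob{Q}) \geq q(\mc{M}_b(a)-2) + q^*(\mc{M}_b(a^*)-2) + \mc{M}_b(\boundary \ob{Q}).
\]
Because $A$ and $B$ cannot be isotoped to be disjoint, an outermost arc of $A\cap B$ forces $\mc{M}_b(a)\geq 2$, and when $a^*$ is defined it is separating so $\mc{M}_b(a^*)\geq 2$. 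The right side is thus nonnegative while the left is $-4$ or $-2$, a contradiction.

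Hence $L_a$ is non-split, non-trivial, and non-simple, so $N[a]$ contains an essential annulus or torus $\ob{R}$. Theorem \ref{Thm: Constructing Q} reduces this to a minimal $\ob{Q}\leq \ob{R}$ whose components are still annuli or tori, so $\chi(\ob{Q})=0$. If $\wihat{q}(\ob{Q})=0$ then $\ob{Q}$ lies in $N$ and is disjoint from $\alpha$, giving conclusion (1). Otherwise Proposition \ref{Thm: MSC 2} gives
\[
0 \geq q(\mc{M}_b(a)-2) + q^*(\mc{M}_b(a^*)-2) + \mc{M}_b(\boundary \ob{Q}),
\]
which forces each nonnegative summand to vanish. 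If $q>0$, then $\mc{M}_b(a)=2$, yielding conclusion (2); when $b$ is separating (the $L_b$-link case) every sign-pair along $a$ must close up, so $\mc{M}_b(a)=|a\cap b|=2$. If instead $q^*>0$, then $a^*$ exists (so $a$ is non-separating and $L_a$ is a knot) and $\mc{M}_b(a^*)=2$. Viewing $\ob{Q}$ in $N[a^*]$, where the $a^*$-parallel boundary components cap off to copies of $A^*$, produces a nontrivial essential surface in the exterior of $L_{a^*}$, so $L_{a^*}$ is non-simple; since $a^*$ separates, $L_{a^*}$ is a $2$-component link, giving conclusion (3).

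For the final clause, when $L_a$ is a knot and $L_b$ is a $2$-component link, both $b$ and $a^*$ are separating on the genus-$2$ surface $F$, so they intersect in either $0$ or at least $4$ points; hence $\mc{M}_b(a^*)\neq 2$ and case (3) cannot arise. In case (2), the annulus $\ob{Q}$ has two boundary circles parallel to $a$ on $F$, and refilling the $2$-handle $A$ caps these into meridians of $\eta(L_a)$, producing the desired meridional annulus. In case (1), a similar analysis, boundary-compressing $\ob{Q}$ when possible along the tori of $\boundary N[a]$ coming from $L_b$, should also yield a meridional annulus. I expect the main obstacles to be the careful verification of (H2) in the corner cases where $L_b$ is a split link with $\beta$ connecting distinct components, and the detailed book-keeping in the furthermore clause that converts a general essential annulus into one whose boundary is meridional on $\eta(L_a)$.
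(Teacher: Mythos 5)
Your overall skeleton tracks the paper's: verify the hypotheses of Proposition \ref{Thm: MSC 2} for $M=S^3$, $W = \eta(L_b\cup\beta)$ (relying on Lemma \ref{Lem: comp arc implies incomp boundary} and irreducibility of $N$), rule out $L_a$ split or unknot, then apply the inequality to a minimal $\ob{Q}\leq\ob{R}$ for $\ob{R}$ an essential torus or annulus, and read off the conclusions from the vanishing of the nonnegative terms. The identification of conclusion (3) with $L_{a^*}$ non-simple is also essentially what the paper does.

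However, your argument for the ``furthermore'' clause is wrong. You write that in case (2) the annulus $\ob{Q}$ ``has two boundary circles parallel to $a$ on $F$,'' and that refilling along $A$ caps these off to meridians. This conflates $\boundary\ob{Q}$ with $\boundary Q - \boundary\ob{Q}$. The circles parallel to $a$ (or $a^*$) are created when you intersect $\ob{Q}$ with $N$ and are part of $\boundary Q - \boundary\ob{Q}$; the actual boundary $\boundary\ob{Q}$ lives on $\boundary\eta(L_a)$ with some unknown slope that must be determined. The paper establishes a small preliminary observation to handle this: if $L_a$ is a knot, $\ob{Q}$ is an annulus, and $\boundary\ob{Q}\cap b = \nil$, then $\boundary\ob{Q}$ is meridional because it is disjoint from the boundaries of outermost discs of $B-A$ lying in $\eta(L_a)$, which are meridian discs. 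The later case analysis then always returns $\boundary\ob{Q}\cap b = \nil$ when $b$ is separating: either $Q\cap b = \nil$ (conclusion (1) branch), or the Proposition \ref{Thm: MSC 2} inequality forces $\mc{M}_b(\boundary\ob{Q}) = |b\cap\boundary\ob{Q}| = 0$. Your proposed case-1 fix (``boundary-compressing $\ob{Q}$\ldots along the tori of $\boundary N[a]$ coming from $L_b$'') is not an argument that appears, and doesn't obviously make progress.

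Related to this: your branching is $\wihat{q}(\ob{Q})=0$ versus $\wihat{q}(\ob{Q})>0$, whereas the paper branches on $Q\cap b = \nil$ versus $Q\cap b\neq\nil$. These differ: $\wihat{q}=0$ with $\boundary\ob{Q}\cap b\neq\nil$ is possible a priori, and in that subcase you jump to conclusion (1) without establishing the boundary is disjoint from $b$. The paper handles $Q\cap b\neq\nil$ uniformly (after showing that minimality plus $\wihat{q}=0$ plus $b$-boundary compressibility would produce an essential disc in $N$, contradicting Lemma \ref{Lem: comp arc implies incomp boundary}), so that the Proposition \ref{Thm: MSC 2} inequality always applies there and delivers $\mc{M}_b(\boundary\ob{Q}) = 0$. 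You should adopt that branching or fill in the missing subcase.

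Finally, your verification of (H2) is only sketched and glosses over the one nontrivial subcase: a splitting sphere for $L_b$ meeting $\beta$ in exactly two points. The complicated-arc hypothesis excludes intersection number one, not two, so a bit more work (essentially the annulus argument implicit in the paper's framework) is needed to see this cannot produce a counterexample to (H2).
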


\begin{remark}
It may be worth remarking that the conclusion that $\mc{M}_b(a) = 2$ implies that if $A$ and $B$ are the discs in $\eta(L_a \cup \alpha)$ bounded by $a$ and $b$ then all arcs of intersection of $A\cap B$ are parallel in $A$. This follows from the first conclusion of Lemma \ref{Lem: Differing Intersections}. If $b$ is separating, then also by that lemma we have $\mc{M}_b(a) = |a \cap b|$.
\end{remark}

\begin{proof}
Let $W$ be the genus two handlebody that is a regular neighborhood of $L_b \cup \beta$. Notice that $N = S^3 - \inter{W}$ is irreducible. By Lemma \ref{Lem: comp arc implies incomp boundary}, $\boundary N$ is incompressible. By Proposition \ref{Cor: MSC 2}, $L_a$ is not a split link or unknot. Let $\ob{R}$ be an essential annulus or torus in $N[a]$ and let $\ob{Q} \leq \ob{R}$ be minimal with respect to $b$-boundary compressions. Notice that if $\ob{R}$ was an annulus, then $\ob{Q}$ is as well.

Suppose that $L_a$ is a knot and that $\ob{Q}$ is an annulus. Then, if $\boundary \ob{Q} \cap b = \nil$, $\boundary \ob{Q}$ must be meridional on $L_a$ since it is disjoint from the boundaries of outermost discs of $B - A$ lying in a neighborhood of $L_a$. 

If $Q \cap b = \nil$, then $\ob{Q} \subset N$ and $\boundary \ob{Q} \cap b = \nil$. This implies conclusion (1). Assume, therefore, that $Q \cap b \neq \nil$. If $Q$ is compressible or $b$-boundary compressible, then $Q = \ob{Q}$ and it is $b$-boundary compressible since $\ob{Q}$ is minimal with respect to $b$-boundary compressions. Performing the $b$-boundary compression creates an essential disc in $N$, contradicting Lemma \ref{Lem: comp arc implies incomp boundary}. Thus, $Q$ is incompressible and $b$-boundary incompressible and $Q \cap b \neq \nil$.

By Proposition \ref{Thm: MSC 2}, we have
\[0 = -2\chi(\ob{Q}) \geq q(\mc{M}_b(a) - 2) + q^*(\mc{M}_b(a^*) - 2) + \mc{M}_b(\boundary \ob{Q}).\]
Each term of the right hand side is non-negative. Hence, each term is 0. If $b$ is separating, this implies that \[ \mc{M}_b(\boundary \ob{Q}) = |b \cap \boundary \ob{Q}| = 0\] and so $\ob{Q}$ is a torus or meridional annulus.

If $q > 0$, then $\mc{M}_b(a) = 2$ and we are done, since this is conclusion (2).

If $q = 0$ we must have $q^* > 0$ and so $\mc{M}_b(a^*) = 2$. If $b$ were separating, we would have $|b \cap a^*| = 2$. Two essential separating curves on a genus two surface that intersect twice are actually parallel, hence $|a \cap b| = 0$, a contradiction. Thus, $L_b$ is a knot. Since all components of $\boundary Q - \boundary \ob{Q}$ are parallel to $a^*$, the 2--component link $L_{a^*}$ is not simple. This is conclusion (3).
\end{proof}

Knots and links obtained by boring split links and unknots using complicated boring arcs are even more special: the dual boring is not complicated.

\begin{theorem}\label{Thm: Dual boring not comp}
Suppose that $L_b \subset S^3$ is a knot or 2-component link obtained from a split link or unknot $L_a$ using a complicated boring arc $\alpha$. Then $L_b$ is not a split link or unknot and $L_b$ has a minimal genus Seifert surface disjoint from the dual boring arc $\beta$. 
\end{theorem}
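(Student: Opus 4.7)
The plan is to establish two things in sequence: that $L_b$ is neither a split link nor an unknot, and then that some minimal genus Seifert surface for $L_b$ is disjoint from $\beta$. Throughout, set $W = \eta(L_a \cup \alpha) = \eta(L_b \cup \beta)$ and $N = S^3 - \inter{W}$. The manifold $N$ is automatically irreducible (being a submanifold of $S^3$), and since $\alpha$ is complicated relative to $L_a$, the symmetric version of Lemma \ref{Lem: comp arc implies incomp boundary} (with the roles of $a$ and $b$ exchanged) shows that $N$ is also boundary-irreducible.

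For the first claim, I would apply Corollary \ref{Cor: MSC 2} to the pair $(S^3, W)$. All its hypotheses are either immediate or vacuous ($\boundary S^3 = \nil$, and $S^3$ has no non-separating sphere and no lens space summand), and conclusion (2) cannot occur because $\boundary M = \nil$. Hence one of $N[a], N[b]$ is irreducible and boundary-irreducible. Since $L_a$ is a split link or an unknot, $N[a]$ is reducible or boundary-reducible, forcing $N[b]$ to be irreducible and boundary-irreducible, which is exactly the statement that $L_b$ is neither split nor an unknot.

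For the Seifert surface claim, I would argue by contradiction. Suppose every minimal genus Seifert surface $S_b$ for $L_b$ meets $\beta$, and set $y = [S_b] \in H_2(N[b], \boundary N[b])$. Then hypothesis (3) of Proposition \ref{Thm: MSC 2} holds for this $y$; if $b$ is separating then $L_b$ is a 2-component link and $\boundary S_b$ projects nontrivially onto each boundary torus, so the additional separating-case condition is satisfied. Let $\ob{R}$ be an essential sphere or disc in $N[a]$ (as provided by $L_a$ being split or an unknot), and take $\ob{Q} \leq \ob{R}$ minimal with respect to $b$-boundary compressions, giving $Q = \ob{Q} \cap N$ incompressible and $b$-boundary incompressible; Theorem \ref{Thm: Constructing Q} guarantees $\chi(\ob{Q}) \geq \chi(\ob{R}) > 0$, and the irreducibility and boundary-irreducibility of $N$ force $\wihat{q}(\ob{Q}) > 0$. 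A short combinatorial argument ensures that whenever $a^*$ is defined, $a^* \cap b \neq \nil$: otherwise $b \subset F$ would share with $a$ the once-punctured torus bounded by $a^*$, and innermost-disc considerations using $b \cap a^* = \nil$ would let us isotope $B$ off $A^*$ into the same solid torus component of $W - A^*$ as $A$, where $A$ and $B$ would both be essential meridional discs and hence isotopable to be disjoint, contradicting $A \cap B \neq \nil$.

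Now (H1)--(H4) of Proposition \ref{Thm: MSC 2} hold trivially (with $\boundary_1 N = \nil$ and $\boundary M = \nil$), and the parallel-to-$c_a$ hypothesis is vacuous. Invoking case (3) of the proposition gives
\[
-2\chi(\ob{Q}) \geq q(\mc{M}_b(a) - 2) + q^*(\mc{M}_b(a^*) - 2) + \mc{M}_b(\boundary \ob{Q}),
\]
whose left side is strictly negative, while the right side is non-negative: $\mc{M}_b(a) \geq 2$ by the refilling-meridians observation, and when $q^* > 0$ we have $\mc{M}_b(a^*) \geq 2$ by Lemma \ref{Lem: Differing Intersections} since $a^*$ is separating and meets $b$. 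This contradiction completes the proof. The main obstacle is the combinatorial verification that $a^* \cap b \neq \nil$, since the non-negativity of the right-hand side of the key inequality depends on it.
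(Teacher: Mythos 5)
Your proposal is correct and follows essentially the same route as the paper's proof: Corollary \ref{Cor: MSC 2} for the first claim, Lemma \ref{Lem: comp arc implies incomp boundary} for boundary-irreducibility of $N$, and then an application of Proposition \ref{Thm: MSC 2} to a surface $\ob{Q} \leq \ob{R}$ minimal with respect to $b$-boundary compressions, using the positivity of $\chi(\ob{Q})$ to rule out the inequality in conclusion (3). The one place you go beyond the paper is in explicitly verifying that $a^* \cap b \neq \nil$ when $a^*$ is defined (so that $\mc{M}_b(a^*) \geq 2$ and the right-hand side of the key inequality is non-negative); the paper's proof leaves this implicit, and your innermost-disc argument supplies a correct justification.
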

\begin{proof}
By Corollary \ref{Cor: MSC 2}, $L_b$ is not a split link or unknot. Let $N$ be the exterior of $L_a \cup \alpha$. By Lemma \ref{Lem: comp arc implies incomp boundary}, $N$ is boundary-irreducible. Let $\ob{R}$ be a splitting sphere or unknotting disc for $L_a$ and let $\ob{Q} \leq \ob{R}$ be minimal with respect to $b$-boundary compressions and assume $\ob{Q}$ is connected. Since $\ob{Q}$ is a sphere or disc, 
\[
-2\chi(\ob{Q}) < q(\mc{M}_b(a) - 2) + q^*(\mc{M}_b(a^*) - 2) + \mc{M}_b(\ob{Q}).
\]
Thus, by Proposition \ref{Thm: MSC 2}, for all non-zero classes $y \in H_2(N[b],\boundary N[b])$, with the property that if $b$ is separating then the projection of $\boundary y$ to the first homology of each component of $\boundary N[b]$ is non-zero, there is a conditioned norm-minimizing representative of $\pm y$ disjoint from $\beta$. Choosing $y$ to be a class representable by Seifert surfaces of $L_b$, we see that $L_b$ has a minimal genus Seifert surface disjoint from $\beta$. (Note that reversing the orientation of a Seifert surface of minimal genus in its homology class still gives a Seifert surface of minimal genus in its homology class, so the distinction between $\pm y$ does not matter if, as we do, consider unoriented knots and links.)
\end{proof}

\begin{remark}
The proof of the theorem (stemming from the application of Theorem \ref{Thm: Main Theorem}) actually gives the stronger result that $\beta$ can be isotoped to lie on the branched surface associated to a certain taut sutured manifold hierarchy for the exterior of $L_b$.
\end{remark}

\section{Rational Tangle Replacement}\label{RTR}
Rational tangle replacement is a particularly tractable boring operation. If $L_a$ and $L_b$ are related by rational tangle replacement, then $\mc{M}_b(a) = |a \cap b| = \mc{M}_a(b)$. Furthermore, all the arcs of $b - a$ join a component of $\boundary \eta(a)$ to itself and, together with an arc in $\eta(\boundary \alpha) \subset \boundary_0 N[a]$ form a meridian of $L_a$. Thus, if $\ob{Q} \leq \ob{R}$ and if $\ob{R}$ has empty or meridional boundary, then $\ob{Q}$ does as well. In fact, a closer examination of Lemma \ref{Thm: Constructing Q}gives the following result whose proof we leave as an exercise.

\begin{lemma}\label{Lem: RTR Q}
Suppose that $\ob{R}$ is an essential surface in $N[a]$ with $\ob{R} = Q$ and $\boundary \ob{R}$ isotoped to intersect $b$ minimally. Suppose that $\ob{Q} \leq \ob{R}$. Then one of the following is true:
\begin{enumerate}
\item $\ob{Q} = \ob{R}$ and $Q$ is incompressible and $b$-boundary incompressible.
\item $Q = \ob{Q} = \ob{R}$ and $\alpha$ is isotopic into $Q$
\item $\ob{Q} < \ob{R}$ and $\ob{Q}$ has meridional boundary on some component of $L_a$.
\end{enumerate}
\end{lemma}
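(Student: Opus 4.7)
The plan is to apply Theorem \ref{Thm: Constructing Q} iteratively to $\ob R$, producing a strictly decreasing (in $\wihat q$) sequence of essential suitably embedded surfaces terminating at some $\ob Q \leq \ob R$, and to track how boundary components on $\boundary_0 N[a]$ evolve during the process. The key geometric input, provided by the rational tangle replacement hypothesis, is that every arc of $b \setminus a$ on $F$ has both endpoints on a single component of $\boundary \eta(a)$ and, together with a short arc in $\eta(\boundary \alpha) \subset \boundary_0 N[a]$, bounds a meridional disc of some component of $L_a$. The argument splits according to whether any iteration actually occurred.

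Suppose first $\ob Q = \ob R$, so Theorem \ref{Thm: Constructing Q} did not apply. This happens either because $Q$ has no compressing or $b$-boundary compressing disc in $N$, yielding conclusion (1) immediately, or because $\wihat q(\ob R) = 0$ (so $Q = \ob R$). In the latter situation, essentiality of $\ob R$ in $N[a]$ still forces $Q$ to be incompressible, and if $Q$ is also $b$-boundary incompressible we are again in (1). Otherwise $Q$ admits a $b$-boundary compressing disc $D$ in $N$; because $\wihat q = 0$ there are no curves of $\boundary R - \boundary \ob R$ to slide $\boundary D$ across, and a close local analysis of $D$ relative to the $2$-handle at $a$ shows that $D$ exhibits a parallelism between $\alpha$ and an arc in $Q$, giving conclusion (2).

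Suppose instead $\ob Q < \ob R$. I would argue by induction on the number of applications of Theorem \ref{Thm: Constructing Q} that every boundary component introduced by a modification is meridional on some component of $L_a$. Inspecting the cases in the proof of Theorem \ref{Thm: Constructing Q}: when $D$ is a compressing disc (in either the reducible or irreducible case), or a $b$-boundary compressing disc whose arc $\delta = \boundary D \cap \boundary N$ joins distinct components of $\boundary R - \boundary \ob R$, or joins a component of $\boundary \ob R$ to a component of $\boundary R - \boundary \ob R$, the resulting surface is produced from the previous one by a proper isotopy in $N[a]$ (possibly combined with a trivial $2$-surgery and the discarding of a sphere). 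In each such case $\boundary \ob Q$ is isotopic in $\boundary N[a]$ to the previous boundary, so no new boundary components appear. The decisive sub-case is the one in which $\delta \subset b$ has both endpoints on a single component $c \subset \boundary R - \boundary \ob R$; then the $b$-boundary compression of $\ob R - \inter C$ introduces a new boundary curve on $\boundary_0 N[a]$ which, up to isotopy, is the frontier of a regular neighborhood of $\delta \cup \sigma$ for an arc $\sigma \subset c$. Since $c$ is parallel to $a$ (and hence to $\boundary \eta(\alpha)$ in $\boundary_0 N[a]$), or analogously to $a^*$, and since $\delta$ together with a short arc of $\boundary \eta(\alpha)$ bounds a meridional disc of some component of $L_a$, this new boundary curve is isotopic to a meridian of $L_a$. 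Subsequent applications of Theorem \ref{Thm: Constructing Q} only isotope the surface or cap off inessential components, so meridionality is preserved, yielding conclusion (3).

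The main obstacle is the careful bookkeeping in the decisive sub-case: verifying that the newly-created boundary curve on $\boundary_0 N[a]$ really is isotopic to a meridian of some component of $L_a$ requires a precise local picture near $\boundary \eta(\alpha)$ and the full strength of $\mc{M}_b(a) = |a \cap b|$ (together with Lemma \ref{Lem: Differing Intersections}) to ensure that every arc of $b \setminus a$ is of the requisite ``short'' type. The case-(2) analysis is also delicate, in that the parallelism of $\alpha$ into $Q$ must be extracted directly from the geometric configuration of the $b$-boundary compressing disc $D$ in $N[a]$, using that $\wihat q(\ob R) = 0$ leaves no room to modify $\ob R$ further within the $<$-relation.
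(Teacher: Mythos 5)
The paper itself declines to prove this lemma, stating only that it follows from ``a closer examination of Lemma~\ref{Thm: Constructing Q}'' and is ``left as an exercise,'' so there is no proof in the text against which to compare. Your overall case structure---reducing to either $\ob{Q} = \ob{R}$ with $Q$ incompressible and $b$-boundary-incompressible, or $\ob{Q} = \ob{R} = Q$ with $\wihat{q} = 0$, or $\ob{Q} < \ob{R}$ obtained by $b$-boundary-compressing along a disc whose arc $\delta \subset b$ runs from a component $c$ of $\boundary R - \boundary\ob{R}$ back to itself---is the right one, as is identifying the RTR fact that arcs of $b - a$ close up along $\eta(\boundary\alpha)$ to meridians of $L_a$ as the geometric input for conclusion (3).

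You do, however, have two genuine gaps. For conclusion (2): the assertion that ``a close local analysis of $D$ relative to the $2$-handle at $a$ shows that $D$ exhibits a parallelism between $\alpha$ and an arc in $Q$'' is transparent only when $\delta = \boundary D \cap b$ meets $a$ exactly once; in that case $\delta \cap \eta(a)$ is a single spanning arc of $\boundary D^2 \times I \subset \boundary\eta(A)$, and the rectangle in the $2$-handle between it and $\alpha = \{0\} \times I$, glued to $D$, is the required proper isotopy of $\alpha$ to $\sigma \subset Q$. When $|\delta \cap a| > 1$ (which is certainly possible, since $|a \cap b| = 2d$ may be large), no single such rectangle exists; you must either rule this out (say, by locating a sub-disc of $D$ whose $b$-arc meets $a$ at most once, or by pushing $D$ across the $2$-handle to produce a boundary-compressing disc for $\ob{Q}$ in $N[a]$ and deriving a contradiction with essentiality) or supply a different argument. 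For conclusion (3): the new boundary is not ``the frontier of a regular neighborhood of $\delta \cup \sigma$'' as you state; the $b$-boundary compression replaces $c$ by the two curves $c_i \cup \delta_i$, where $c_1 \cup c_2 = c - \eta(\boundary\sigma)$ and $\delta_i$ are parallel pushoffs of $\delta$. More importantly, invoking the RTR meridian fact still requires you to argue that $\delta$, being disjoint in its interior from all of $\boundary R$ and having both endpoints on a single curve parallel to $a$, is forced to be (isotopic to) a \emph{single} arc of $b - a$ rather than several arcs strung together, and to address the sub-case in which $c$ is parallel to $a^*$ rather than to $a$---a case the paper's one-sentence sketch also omits. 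These points are all fixable, but as written the proof does not yet establish the lemma.
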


We can now give the promised new proofs that unknotting number one knots are prime and that genus is superadditive under band connect sum. The proof of the first of these relies on Theorem \ref{Cor: Non-simple Knot}, and the proof of the second follows almost immediately from Theorem \ref{Thm: Dual boring not comp}. Both of those theorems were consequences of Proposition \ref{Thm: MSC 2}, so we do have a unified sutured manifold theory framework for answering questions about rational tangle replacement.

\begin{theorem}[{\cite{S1}}]\label{Cor: Prime Unknotting 1}
Unknotting number one knots are prime.
\end{theorem}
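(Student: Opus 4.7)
The plan is to argue by contradiction. Assume $K$ is composite, $K = K_1 \# K_2$ with both summands nontrivial, and has unknotting number one. Realize the unknotting crossing change as a distance-two rational tangle replacement, setting $L_b = K$, $L_a = J$ the unknot, and letting $\alpha, \beta$ be the associated boring arcs. By the opening remarks of Section \ref{RTR}, one has $|a \cap b| = \mc{M}_b(a) = \mc{M}_a(b) = 2$.

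The first step is to verify that $\beta$ may be taken to be complicated. If not, $\beta$ is disjoint from some unknotting disc of $J$; cutting the exterior solid torus of $J$ along that disc produces a $3$-ball inside which $\beta$ has both endpoints on the annular remnant of $\partial\eta(J)$, so $\beta$ is boundary-parallel into $\partial\eta(J)$ and $W = \eta(L_b \cup \beta)$ is a standardly embedded (unknotted) genus-two handlebody in $S^3$. Any distance-two rational tangle replacement in a standard $W$ yields a 2-bridge knot or link, and 2-bridge knots are prime, contradicting that $L_a = K$ is composite. Hence $\beta$ is complicated.

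Since $K$ is composite, its exterior contains an essential decomposing annulus with meridional boundary, so $L_a = K$ is non-simple. I would then apply Corollary \ref{Cor: Non-simple Knot}. Conclusion (2) is vacuous here, as $\mc{M}_b(a) = 2$ is automatic, so attention falls on conclusions (1) and (3). Conclusion (1) produces an essential annulus or torus $F$ in the exterior of $K$ disjoint from $\alpha$. Because $F$ must persist into the exterior of $J$ after the crossing change — a solid torus, in which no essential annulus or torus can survive — $F$ can be arranged to be a decomposing annulus or JSJ torus respecting the connect-sum structure. The crossing change is then localized to one side of this decomposing surface, modifying only a single summand $K_i$ to some $K_i'$ and producing $J = K_i' \# K_{3-i}$. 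Since the unknot has no nontrivial connect-sum decomposition, $K_{3-i}$ is unknotted, contradicting nontriviality.

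The main obstacle is conclusion (3), which supplies an essential separating meridional curve $a^* \subset \partial \eta(K \cup \alpha)$ disjoint from $a$, together with a non-simple 2-component link $L_{a^*} = K \cup C$, where $C$ is the core of one of the solid tori obtained by cutting $W$ along the disc $A^* \subset W$ bounded by $a^*$. The plan to rule this out is to exploit that $a^*$ separates while being disjoint from $a$: the discs $A$ and $A^*$ jointly decompose $W$ so that $\alpha$ and the replacement disc $B$ are confined to a prescribed side of $A^*$. One can then re-apply the machinery — either Corollary \ref{Cor: Non-simple Knot} for $L_{a^*}$, or Proposition \ref{Thm: MSC 2} with sutures adapted to $a^*$ — to produce an essential surface in the exterior of $L_{a^*}$ disjoint from $\alpha$, which via the inherited composite structure of $K$ reduces to the case (1) contradiction.
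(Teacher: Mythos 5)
Your proposal follows the same overall skeleton as the paper's proof --- reduce to the case of a complicated boring arc, invoke Corollary \ref{Cor: Non-simple Knot}, and derive a contradiction from the intersection number --- but the argument that $\beta$ may be taken to be complicated has a genuine gap. You claim that if $\beta$ is disjoint from an unknotting disc of the unknot, then cutting the exterior solid torus along that disc yields a $3$-ball in which $\beta$ is boundary-parallel, whence $W$ is an unknotted genus-two handlebody and $L_a$ is $2$-bridge. This fails at the first step: an arc in a $3$-ball with both endpoints on an annulus in the boundary can be knotted, so boundary-parallelism of $\beta$ does not follow, and neither does unknottedness of $W$. Even granting unknottedness of $W$, what follows is that the composite knot has tunnel number at most one, not that it is $2$-bridge, and appealing to primeness of tunnel number one knots would be much heavier machinery than the situation calls for. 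The paper instead isotopes the unknotting disc to meet the tangle decomposition sphere minimally, concludes that the fixed tangle is trivial, exhibits a genus-one Seifert surface for $L_a$, and contradicts the fact that composite knots have genus at least two --- a more elementary route that avoids the gap.

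There are also bookkeeping errors that reverse the logic of the final step. You set $L_b = K$ and $L_a = J$ at the outset but then write $L_a = K$ in the body; the paper's convention (and the one the argument needs) is $L_a = K$, $L_b = J$. More importantly, a crossing change is a distance-$2$ rational tangle replacement, so $|a \cap b| = 2d = 4$, not $2$ as you assert. Conclusion (2) of Corollary \ref{Cor: Non-simple Knot} is therefore not ``vacuous'': it forces $\mc{M}_b(a) = |a\cap b| = 2$, contradicting $|a\cap b|=4$, and that contradiction is precisely the paper's punch line. Finally, your treatments of conclusions (1) and (3) are too sketchy to evaluate; for (3), the relevant fact is that $b$-boundary compressions arising from a rational tangle replacement produce only meridional boundary components, never components parallel to $a^*$ (see Lemma \ref{Lem: RTR Q} and the opening remarks of Section \ref{RTR}), so the case $q^* > 0$ does not arise.
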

\begin{proof}
Suppose that $L_a$ is a composite unknotting number one knot. Let $L_b$ be the unknot obtained by changing a crossing of $L_a$. 

\textbf{Case 1:} $\alpha$ is disjoint from an essential meridional annulus for $L_a$.

In this case, the crossing change occurs on a single summand of $L_a$. Such a crossing change can never unknot $L_a$.

\textbf{Case 2:} $L_b$ has an unknotting disc with interior disjoint from $\beta$.

Let $D$ be the 3--ball complementary to the site where the rational tangle replacement occurs so that $(D,L_a \cap B) = (D,L_b \cap B)$ is a 2--tangle. Isotope the unknotting disc so that it intersects $\boundary D$ minimally. It is not difficult to see that this implies that the strands of $L_a \cap D$ are parallel. It is then easy to construct a genus 1 Seifert surface for $L_a$. Since $L_a$ is non-trivial, this is a minimal genus Seifert surface. Since $L_a$ is composite, it must have genus at least 2. Thus, every unknotting disc for $L_b$ has its interior intersected by $\beta$.

\textbf{Case 3:} The interior of every unknotting disc for $L_b$ is intersected by $\beta$.

In this case $\beta$ is a complicated boring arc. By Case 1, we may assume that $\alpha$ is not disjoint from any essential meridional annulus. By Corollary \ref{Cor: Non-simple Knot}, $\mc{M}_b(a) = 2$. Since $b$ is separating, we have $|a \cap b| = 2$. This contradicts the fact that $d = |a \cap b|/2 = 2$.
\end{proof}

\begin{theorem}[{\cite[Theorem 8.4]{S3} and \cite{G4}}]\label{Thm: Genus superadd}
If $K$ is the band sum of knots $K_1$ and $K_2$ using a band $\beta$, then the genus of $K$ is at least the sum of the genera of $K_1$ and $K_2$. Equality holds if and only if the interior of $\beta$ is disjoint from minimal genus Seifert surfaces for $K_1$ and $K_2$.
\end{theorem}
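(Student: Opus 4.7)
The plan is to split on whether the boring arc $\alpha$ (the core arc of the band, running from $K_1$ to $K_2$) is complicated in the sense of Section \ref{Refilling}. Set $L_a = K_1 \cup K_2$ and $L_b = K$, and let $R$ denote the band itself, so $\partial R$ meets $L_a$ in two ``short ends'' on $K_1$ and $K_2$ and meets $K$ in two ``band sides.'' The co-core arc of $R$ will be denoted $\beta$, matching the paper's use of boring arcs.

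If $\alpha$ is not complicated, then since $L_a$ is a split link, some splitting sphere $\Sigma$ for $L_a$ meets $\alpha$ in exactly one point. The band can then be isotoped so that $R \cap \Sigma$ is a single arc, realizing the band sum as the connected sum $K = K_1 \# K_2$. Schubert's classical additivity of genus under connected sum gives $\genus(K) = \genus(K_1) + \genus(K_2)$, and minimal genus Seifert surfaces for $K_1, K_2$ in the two balls bounded by $\Sigma$ can be isotoped to be disjoint from $R$.

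If $\alpha$ is complicated, I apply Theorem \ref{Thm: Dual boring not comp} to obtain a minimal genus Seifert surface $S$ for $K$ disjoint from $\beta$. Using that $S$ is incompressible, I arrange $S \cap R$ to consist exactly of the two band sides: interior circles of $S \cap R$ are removed by innermost disc arguments, any intersections of $S$ with the short ends are removed by small ambient isotopy (since the short ends are disjoint from $K$), and interior arcs of $S \cap R$ are removed by an outermost arc argument that would otherwise give a compressing or $b$-boundary compressing disc for $S$. Then $S' := S \cup R$ is a connected, embedded orientable surface with $\partial S' = L_a$, and inclusion--exclusion gives
\[
\chi(S') = \chi(S) + \chi(R) - \chi(S \cap R) = (1 - 2\genus(K)) + 1 - 2 = -2\genus(K),
\]
so $\genus(S') = \genus(K)$.

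To conclude, note that any connected Seifert surface for the split link $L_a$ has genus at least $\genus(K_1) + \genus(K_2)$: one picks a splitting sphere $\Sigma$ and simplifies $\Sigma \cap S'$ by innermost disc compressions, each of which strictly increases $\chi$, until $S'$ is disjoint from $\Sigma$ and therefore splits as disjoint Seifert surfaces for $K_1$ and $K_2$; since $S'$ is connected, at least one compression must disconnect it, giving the stated inequality. Hence $\genus(K) = \genus(S') \geq \genus(K_1) + \genus(K_2)$. The equality clause then follows: the $(\Leftarrow)$ direction is immediate because $S_1 \cup R \cup S_2$ has genus $\genus(K_1) + \genus(K_2)$, and in the $(\Rightarrow)$ direction, equality forces the compressions to produce disjoint minimal genus Seifert surfaces for the $K_i$ whose union is disjoint from $R$. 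The main obstacle is the outermost arc cleanup producing $S \cap R = $ band sides, which requires care in distinguishing arcs with endpoints on the band sides from those with endpoints on the short ends, but this is essentially routine given the incompressibility of $S$ and its disjointness from $\beta$.
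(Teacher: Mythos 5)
Your proof follows essentially the same approach as the paper. Both split on whether the core of the band is a complicated boring arc (equivalently, whether the band sum is actually a connected sum, handled by classical additivity), and in the complicated case both apply Theorem \ref{Thm: Dual boring not comp} to obtain a minimal genus Seifert surface $S$ for $K$ disjoint from the cocore of the band, finishing with cut-and-paste. The only difference is that the paper delegates the cut-and-paste entirely to \cite[Theorem 8.4]{S3}, whereas you spell it out: forming $S' = S \cup R$, computing $\chi(S') = -2\genus(K)$, and invoking superadditivity of genus for connected Seifert surfaces of split links. That elaboration is in the right spirit, though the $(\Rightarrow)$ direction of the equality clause --- that the compressed pieces of $S'$ can be taken disjoint from the interior of the band --- is asserted rather than argued, and would need a bit more care since the splitting sphere $\Sigma$ necessarily meets $R$; this too is precisely what the paper offloads to the cited reference, so it is not a gap relative to the paper's own proof.
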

\begin{proof}
If the band sum is actually a connect sum, the result follows from the additivity of genus under connect sum. Otherwise, the band $\beta$ is complicated. Since (by the definition of ``band sum'') $K_1 \cup K_2$ is a split link, by Theorem \ref{Thm: Dual boring not comp}, $K$ has a minimal genus Seifert surface disjoint from cocore of the band. Easy cut and paste arguments, as in \cite[Theorem 8.4]{S3}, finish the proof of the theorem.
\end{proof}

\section{Band sums satisfy the cabling conjecture}\label{Band Sum}

If $K \subset S^3$ is a knot such that the manifold $M_K(m/n)$ obtained by $m/n$ Dehn surgery on $K$ has an essential surface $\wihat{Q}$ not disjoint from the core of the surgery solid torus, then in the exterior of $K$ in $S^3$, then $K$ can be isotoped so that the surface $\wihat{Q} - \inter{\eta}(K)$ is an essential surface with boundary having slope $m/n$ on $K$. We use this observation to prove that band sums satisfy the cabling conjecture. The heart of the matter is contained in the following theorem:

\begin{theorem}\label{Band Attachment Cabling Conj}
Suppose that a knot $L_a$ is obtained from a 2--component link $L_b$ by attaching a band with complicated core. Then $L_a$ is not a cable knot and no non-trivial surgery on $L_a$ produces a reducible 3-manifold.
\end{theorem}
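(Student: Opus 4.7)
The plan is to prove the two conclusions separately. The non-cable claim follows relatively directly from Corollary~\ref{Cor: Non-simple Knot}, while the no-reducible-surgery claim requires a more delicate application of Proposition~\ref{Thm: MSC 2} together with Scharlemann's theorem from \cite{S4}.

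For the cable claim, I would suppose $L_a$ is a cable knot. Then the cabling annulus is an essential annulus in the exterior of $L_a$, so $L_a$ is non-simple and Corollary~\ref{Cor: Non-simple Knot} applies. Since $L_b$ has two components, the ``Furthermore'' clause of that corollary produces an essential annulus in the exterior of $L_a$ with meridional boundary. Capping off this annulus across meridional discs of $\eta(L_a)$ yields an essential 2-sphere in $S^3$ meeting $L_a$ in exactly two points, exhibiting $L_a$ as a non-trivial connected sum. This contradicts the well-known primeness of cable knots.

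For the reducible-surgery claim, I would first verify $L_a$ is non-trivial (using Theorem~\ref{Thm: Dual boring not comp} when $L_b$ is split, and a direct complication-of-$\beta$ argument when $L_b$ is non-split), and then suppose $M_{L_a}(m/n)$ is reducible for some slope $m/n \ne \infty$. Since $S^3 \setminus L_a$ is irreducible, any essential reducing sphere $\wihat Q$ in $M_{L_a}(m/n)$ meets the surgery core $K^*$, so $\ob R := \wihat Q \cap N[a]$ is a non-empty essential planar surface in $N[a]$ with all boundary components of slope $m/n$ on $\boundary_0 N[a]$. Applying Lemma~\ref{Lem: RTR Q} I would pass to a minimal $\ob Q \le \ob R$ so that $Q$ is incompressible and $b$-boundary incompressible in $N$. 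Then I would verify the hypotheses (H1)--(H4) of Proposition~\ref{Thm: MSC 2}: (H1) and (H3) are vacuous because $M = S^3$ has no boundary, (H2) follows from the complication of $\beta$ (so any splitting sphere of $L_b$, if $L_b$ is split, meets $\beta$ in at least three points), and (H4) holds because $N$ is a handlebody. The additional hypothesis (1) or (3) of the Proposition follows from the complication of $\beta$: either $L_b$ is split, in which case $N[b]$ is reducible, or $L_b$ is non-split, in which case a Seifert-surface class $y \in H_2(N[b],\boundary N[b])$ admits no conditioned norm-minimizing representative disjoint from $\beta$.

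The Proposition then yields
\[
-2\chi(\ob{Q}) \;\ge\; q(\mc{M}_b(a) - 2) + q^*(\mc{M}_b(a^*) - 2) + \mc{M}_b(\boundary \ob{Q}).
\]
Since the band attachment has distance one, $\mc{M}_b(a) = 2$, so the first term vanishes; since $b$ and $a^*$ are distinct essential separating curves on the genus 2 surface $F$ (the former because $L_b$ has two components, the latter by the definition of $a^*$), $\mc{M}_b(a^*) \ge 4$. For planar $\ob Q$ with $p$ boundary components, the inequality reduces to $2(p-2) \ge 2q^* + \mc{M}_b(\boundary \ob{Q})$. The main obstacle will be extracting the contradiction from this last inequality: this is where Scharlemann's theorem in \cite{S4} enters, controlling $|\wihat Q \cap K^*|$ (and hence $p$) and the way slope-$m/n$ curves meet the relevant suture structure on $\boundary_0 N[a]$; combined with Lemma~\ref{Lem: Differing Intersections} this should close the argument in the same spirit as the proof of Theorem~\ref{Thm: Genus superadd}.
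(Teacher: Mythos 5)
Your cable argument is a valid alternative route: deducing from the ``Furthermore'' clause of Corollary~\ref{Cor: Non-simple Knot} that a cable $L_a$ would admit an essential meridional annulus, hence be composite, contradicting primeness of cable knots, is clean and does not appear in the paper. (The paper instead folds the cable claim into Case 3 of its reducibility argument, where \cite[Corollary~4.4]{S4} shows that a putative reducing sphere forces $L_a$ to be cabled, and then the annulus case produces the contradiction.)

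The reducible-surgery argument, however, has a substantial gap. When you ``pass to a minimal $\ob{Q} \le \ob{R}$ so that $Q$ is incompressible and $b$-boundary incompressible,'' you are silently assuming that conclusion (1) of Lemma~\ref{Lem: RTR Q} holds. But the lemma has an alternative, conclusion (2), in which $\ob{Q}$ is already disjoint from $\alpha$ and there is a $b$-boundary compressing disc for $\ob{Q}$ making $\alpha$ isotopic into $\ob{Q}$; this second alternative cannot be reduced further by Theorem~\ref{Thm: Constructing Q}, and it is in fact the \emph{main} case. The paper's structure is: in case (1) the inequality of Proposition~\ref{Thm: MSC 2} gives an immediate contradiction, so case (2) must hold; then one repeatedly $b$-boundary compresses $Q = \ob{Q}$ inside $N$ (not in $N[a]$, so these are not isotopies of $\ob{Q}$). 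If the terminal surface $Q_p$ still meets $b$, a direct Euler-characteristic tally using Theorem~\ref{Thm: Main Theorem} again gives a contradiction; if $Q_p$ is disjoint from $b$, Lemma~\ref{Lem: Making Disjt} isotopes $\ob{Q}$ off a pushed-in copy $L$ of $b$, so $\ob{Q}$ sits in an unknotted solid torus $V$, and only here does \cite[Corollary~4.4]{S4} enter, to show $L_a$ is cabled in $V$ with $\ob{Q}$ replaceable by the cabling annulus, which is then killed by the annulus case. Your proposal omits this entire second branch, which is where all the real work lies.

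A secondary issue is that you have misdiagnosed where \cite{S4} is used and not finished the inequality you derived. There is no need for \cite{S4} to control $p = |\wihat{Q}\cap K^*|$: since $b$ is separating, $\wihat{\gamma} = \nil$, and each component of $\boundary\ob{Q}$ (a slope $m/1$ curve, where $n=1$ follows first from Gordon--Luecke \cite{GL}, a step you skipped) meets the two arcs of $b - \eta(a)$ once each, so $\mc{M}_b(\boundary\ob{Q}) = |\boundary\ob{Q}\cap b| = 2p$. Substituting into your inequality gives $2(p-2) \ge 2p$, an outright contradiction with no external input. Thus the incompressible case is cheap; the expensive part of the proof is precisely the $b$-boundary-compressible case you left out.
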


We define $\alpha$, $\beta$, $W$, and $N$ as usual. Let $L$ be the result of pushing $b$ slightly into $N$. Before embarking on the proof we need a preliminary result.

\begin{lemma}\label{Lem: Making Disjt}
Suppose that $\ob{Q} \subset N[a]$ is a properly embedded incompressible surface and that $Q' \subset N$ is obtained by maximally $b$-boundary compressing $Q$. If $Q'$ is disjoint from $b$ then $\ob{Q}$ can be isotoped to be disjoint from $L$.
\end{lemma}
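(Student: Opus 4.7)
I would proceed by induction on the number $k$ of $b$-boundary compressions needed to reduce $Q$ to $Q'$. For the base case $k = 0$, the surface $Q$ is already disjoint from $b$, and we may take the pushoff $L$ to lie in a tubular collar $\eta(b) \subset N$ of $b$ chosen small enough that $Q \cap \eta(b) = \nil$; since $\ob{Q} - Q$ is contained in the 2-handle attached at $a$ (which is disjoint from $L \subset N$), this gives $\ob{Q} \cap L = \nil$.

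For the inductive step with $k \geq 1$, let $D$ be the first $b$-boundary compression disc in the sequence, with $\boundary D = \delta \cup \epsilon$, $\delta \subset b$, $\epsilon \subset Q$. Let $A \subset N$ be the collar annulus cobounding $b$ and $L$, parametrized as $A = b \times [0,1]$ with $b \times \{0\} = b$ and $b \times \{1\} = L$. First I would simplify $A \cap D$ using standard innermost-disc and outermost-arc arguments: circles of $A \cap D$ innermost on $D$ can be removed by using the incompressibility of $\ob{Q}$ in $N[a]$ to cap them off with sub-discs of $\ob{Q}$ and then isotoping, while circles and arcs innermost on $A$ can be removed using the product structure of $A$. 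After this simplification, $A \cap D = \delta$, and the union $D^* = D \cup R$, with $R = \delta \times [0,1] \subset A$ the sub-rectangle running from $\delta$ up to an arc $\delta_L \subset L$, bounds a 3-ball region in $N$ meeting $\ob{Q}$ only in a small neighborhood of $\epsilon$. Pushing that strip of $\ob{Q}$ across the 3-ball region gives an ambient isotopy of $\ob{Q}$ in $N[a]$ to a surface $\ob{Q}_1$ with $\ob{Q}_1 \cap N = Q_1$, the $b$-boundary compression of $Q$ along $D$. Applying the inductive hypothesis to $\ob{Q}_1$—which reduces to a surface disjoint from $b$ in $k - 1$ further compressions—produces an additional isotopy carrying $\ob{Q}_1$ off $L$, and the composition gives the desired isotopy of $\ob{Q}$ off $L$.

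The main obstacle will be justifying the simplification $A \cap D = \delta$ and verifying that the sweep across the swept region $D^*$ realizes an ambient isotopy in $N[a]$ with the claimed effect on $\ob{Q} \cap N$. The simplification must be performed so as not to alter the isotopy class of $\ob{Q}$; this requires care because the incompressibility argument used to eliminate circles innermost on $D$ only directly gives an isotopy of $D$ (not of $\ob{Q}$), and one must track that the two isotopies can be carried out in tandem. Verifying the sweep itself requires confirming that the 3-ball region bounded by $D^*$ is free of other sheets of $\ob{Q}$ and that the resulting local modification of $\ob{Q}$ (which topologically does nothing to $\ob{Q}$ but changes the position of two arcs of $\boundary Q$ relative to $b$ and $L$) correctly models the $b$-boundary compression at the level of $\ob{Q} \cap N$, so the inductive hypothesis applies to the resulting $\ob{Q}_1$.
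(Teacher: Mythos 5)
The core idea here—realizing each $b$-boundary compression of $Q$ as an isotopy of $\ob{Q}$ and tracking what happens to $L$—is the right one, but the geometric realization you propose is aimed in the wrong direction and, as stated, cannot work.

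The decisive problem is your claim that pushing the strip of $\ob{Q}$ across the 3-ball bounded by $D^* = D \cup R$ yields $\ob{Q}_1$ with $\ob{Q}_1 \cap N = Q_1$, the $b$-boundary compression of $Q$. This cannot be correct. A $b$-boundary compression raises Euler characteristic: $\chi(Q_1) = \chi(Q)+1$. But your sweep region, and the rectangle $R$, both lie inside $N$; an ambient isotopy supported in the interior of $N$ (away from $F$) does not alter $\ob{Q}\cap N$ up to isotopy and in particular cannot change its Euler characteristic. The only way an isotopy of $\ob{Q}$ can change $\ob{Q}\cap N$ from $Q$ to the boundary-compressed surface $Q_1$ is to push part of $\ob{Q}$ across $F$ near $\delta$, out of $N$. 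In other words, the strip has to ``dip into'' $W$ on the far side of $F$ from $L$. Your $R$ goes exactly the other way: from $\delta \subset b \subset F$ inward to $\delta_L \subset L$, so the sweep you describe moves the strip toward $L$, not away from it, and keeps it in $N$. Consequently neither the equation $\ob{Q}_1 \cap N = Q_1$ nor the disjointness from $L$ needed to feed the inductive hypothesis is established.

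The paper's proof makes no use of the collar annulus between $b$ and $L$, the rectangle $R$, the disc $D^*$, or an induction. It simply observes that every $b$-boundary compression of $Q$ can be realized by pushing $\ob{Q}$ across the compressing disc and then slightly past $\delta$ into $W$, staying inside $W - (L_a \cup \alpha)$ (a collar of $F$ on the $W$ side, disjoint from $L$). Doing this for all the compressions at once produces an isotoped copy of $\ob{Q}$ whose intersection with $N$ is $Q'$ and whose complementary pieces lie in $W$; since $Q'$ is disjoint from $b$ and $L$ is a small pushoff of $b$ into $N$, the whole isotoped surface misses $L$. If you reorient your sweep to push the strip out of $N$ into the collar of $F$ inside $W$ (dropping $R$ and $A$ entirely), your inductive argument becomes a slightly more bureaucratic version of the paper's one-shot argument, but as written the directional error is fatal.
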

\begin{proof}
Each $b$-boundary compression can be achieved by an isotopy of $\ob{Q}$ that makes it ``dip into'' $W$ without making it intersect $L_a \cup \alpha$. Thus, $Q'$ is obtained from $Q$ in $S^3 - (L_a \cup \alpha)$ by an isotopy of $\ob{Q}$. Hence, if $Q' \cap L = \nil$, then $\ob{Q}$ can be isotoped to be disjoint from $L$.
\end{proof}

\begin{proof}[Proof of Theorem \ref{Band Attachment Cabling Conj}]
Suppose that a surgery on $L_a$ of slope $m/n$ with $(m,n) = 1$ produces a reducible manifold. By \cite{GL}, we know that $n = 1$. Then there is an essential connected planar surface $\ob{Q}$ in the exterior of $L_a$ with boundary of slope $m/1$ on $L_a$. By Lemma \ref{Lem: RTR Q}, there is an isotopy of $\ob{Q}$ so that one of the following occurs:
\begin{enumerate}
\item $Q = \ob{Q} \cap N$ is incompressible and $b$-incompressible
\item $Q = \ob{Q}$ and there is a $b$-boundary compressing disc for $\ob{Q}$ giving rise to an isotopy of $\alpha$ into $\ob{Q}$.
\end{enumerate}

\textbf{Case 1:} $\ob{Q}$ is an annulus and $\alpha$ is isotopic into $\ob{Q}$.

Let $D$ be the $b$-boundary compression giving rise to the isotopy of $\alpha$ into $\ob{Q}$. Performing the $b$-boundary compression creates an essential disc in $N$, contradicting the fact that $\beta$ is complicated. (Lemma \ref{Lem: comp arc implies incomp boundary}). \qed(Case 1).

Let $\gamma = b$ and recall that $(N,\gamma)$ is a taut sutured manifold and that $\gamma$ respects $Q$. If $Q$ is incompressible and $b$-boundary incompressible, Proposition \ref{Thm: MSC 2} implies that
\[
-2\chi(\ob{Q}) \geq 2|\boundary \ob{Q}|
\]
since each component of $\boundary \ob{Q}$ intersects $b$ twice and $\wihat{\gamma} = \nil$. But $\ob{Q}$ is a planar surface and so we would have
\[
-2 + |\boundary \ob{Q}| \geq |\boundary \ob{Q}|
\]
an obvious contradiction. Hence, $Q$ is not incompressible and $b$-boundary incompressible. In fact, it must be incompressible but $b$-boundary compressible. Consequently, $Q = \ob{Q}$ and $\alpha$ is isotopic into $\ob{Q}$. Thus, in light of Case 1, we will be done if we can prove that it is possible to take $\ob{Q}$ to be an annulus.

Let $Q_0 = \ob{Q}$. Each component of $\boundary \ob{Q}$ intersects $b$ exactly twice. Since $\ob{Q}$ is incompressible in $N[a]$, any $b$-boundary compressing disc must join a component of $\boundary \ob{Q}$ to itself (and cross $a \subset \boundary N$). Assume that we have defined $Q_i$ and let $Q_{i+1}$ be obtained from $Q_i$ by a $b$-boundary compression. We obtain a sequence $Q_0, Q_1, \hdots, Q_p$ so that $Q_p$ is $b$-boundary incompressible. (Possibly $Q_p$ is disjoint from $b$.) Each $b$-boundary compression must join a component of $\boundary \ob{Q}$ to itself and must cross $a$, since otherwise we would have a $b$-boundary compression for $\ob{Q}$ in $N[a]$. Thus, each $b$-boundary compression removes a component of $\boundary \ob{Q}$ and replaces it with two components disjoint from $b$. In particular, the boundary of each $Q_i$ intersects $b$ minimally up to isotopy.

\textbf{Case 2:} $Q_p$ is not disjoint from $b$.

Since $Q_p$ is incompressible and $b$-boundary incompressible in $N$ and intersects $b$ minimally, we may apply Theorem \ref{Thm: Main Theorem} to deduce that
\[
-2\chi(Q_p) \geq |\boundary Q_p \cap b|
\]

Since $Q_p$ was obtained from $\ob{Q} = Q$ by $b$-boundary compressing $p$ times, we have
\[
-2\chi(Q_p) = -2(\chi(\ob{Q}) + p) = -2\chi(\ob{Q}) - 2p.
\]

Each boundary compression converted a component of $\boundary \ob{Q}$ that intersected $b$ twice into two components of $\boundary Q_p$ that are disjoint from $b$. Thus, $|\boundary Q_p \cap b| = 2(|\boundary \ob{Q}| - p)$. Consequently,
\[
-2\chi(\ob{Q}) - 2p \geq 2|\boundary \ob{Q}| - 2p
\]
Hence,
\[
-\chi(\ob{Q}) \geq |\boundary \ob{Q}|.
\]
which, as before, is a contradiction to the fact that $\ob{Q}$ is a planar surface. Hence, this case cannot occur.

\textbf{Case 3:} $Q_p$ is disjoint from $b$.

By Lemma \ref{Lem: Making Disjt}, $\ob{Q} = Q$ can be isotoped (relative to its boundary) to be disjoint from $L$. Since $L$ is the unknot in $S^3$, $\ob{Q}$ lies in a solid torus $V$. Performing $m/n$ surgery on $L_a$ in $V$ produces a reducible manifold $V(m/n)$. Since $\beta$ is a complicated boring arc, $L_a$ cannot lie in a 3--ball in $V$. (I.e. the band sum is not a connected sum.) By \cite[Corollary 4.4]{S4}, $L_a$ is cabled and $m/n$ is the slope of the cabling annulus. Hence, it is possible to take $\ob{Q}$ to be an annulus, and we are done.
\end{proof}

We conclude with a simple corollary that also draws on work of Scharlemann.

\begin{corollary}\label{Cor: Band Sums CC}
Suppose that $K$ is the band sum of two knots $K_1$ and $K_2$. Then $K$ satisfies the cabling conjecture and is not a cable knot.
\end{corollary}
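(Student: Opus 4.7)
The plan is a short dichotomy on whether the cocore $\beta$ of the band is a complicated boring arc for the split link $L_b = K_1 \sqcup K_2$.

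In the first case, $\beta$ is complicated. Then Theorem \ref{Band Attachment Cabling Conj} applies verbatim to the band attachment producing $K$ from $L_b$, and yields both conclusions at once: $K$ is not a cable knot and no non-trivial Dehn surgery on $K$ produces a reducible 3--manifold. In particular, $K$ vacuously satisfies the cabling conjecture.

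In the second case, $\beta$ is not complicated. Since $L_b$ is split, by the definition of ``complicated boring arc'' this forces $\beta$ to meet some splitting 2--sphere $S$ of $L_b$ in a single point. A routine isotopy argument then identifies $K$ with the connect sum $K_1 \# K_2$. Taking $K_1$ and $K_2$ both non-trivial (the intended content of ``band sum of two knots''), $K$ is composite, and hence not a cable knot because cable knots are prime. The cabling conjecture for composite knots is by now classical: the essential decomposing annulus in the exterior of $K$, together with \cite[Corollary 4.4]{S4}, rules out a reducing filling except possibly at the cabling slope of a cable knot, which cannot occur for a composite knot.

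The only step that deserves a little care is the reduction in the second case: one must check that a band whose cocore meets a splitting sphere of $L_b$ exactly once genuinely gives a connect sum. This is standard once $S$ is in hand, since cutting along $S$ turns the band into a pair of unknotted half-arcs in the two complementary 3--balls, and reassembling exhibits $K$ as $K_1 \# K_2$ up to isotopy. Consequently, all of the ``new'' sutured-manifold technology of the paper is invested in the complicated case handled by Theorem \ref{Band Attachment Cabling Conj}, and the corollary follows by combining that theorem with the classical machinery for connect sums.
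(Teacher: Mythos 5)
Your proposal follows essentially the same route as the paper: the same dichotomy on whether the cocore $\beta$ of the band is a complicated boring arc, with Theorem~\ref{Band Attachment Cabling Conj} handling the complicated case and the not-complicated case reduced to a connect sum. The one place you diverge is in closing the connect-sum case. The paper simply invokes \cite[Corollary 4.5]{S4}, which directly gives the cabling conjecture for connect sums. You instead try to derive it from \cite[Corollary 4.4]{S4} via ``the essential decomposing annulus in the exterior of $K$ rules out a reducing filling except possibly at the cabling slope,'' but Corollary~4.4 of \cite{S4} concerns a knot sitting inside a solid torus and producing a reducible manifold by surgery there; applying it to a composite knot requires first locating $K$ in an appropriate solid torus and then confirming that the reducibility descends to that solid torus, neither of which you justify. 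That derivation, as written, does not go through; it is cleaner (and what the paper does) to cite the corollary of Scharlemann that already handles the composite case. Your elaboration of why a band cocore meeting a splitting sphere once yields a connect sum is a fine addition of detail that the paper leaves implicit.
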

\begin{proof}
If the core of the band is not complicated then it intersects a splitting sphere for $K_1 \cup K_2$ exactly once. If it intersects a splitting sphere exactly once, the band sum is a connected sum. By Theorem \ref{Band Attachment Cabling Conj}, we may assume that the band sum is a connect sum. By \cite[Corollary 4.5]{S4}, $K$ satisfies the cabling conjecture. It is well-known that cable knots are prime and so we are done.\end{proof}

\newpage

\end{document}